\documentclass[11pt]{article}
\usepackage{amsfonts, amssymb, amsmath, amsthm, xcolor, graphicx, pstricks, pstricks-add}
\oddsidemargin 0pt \evensidemargin 0pt \marginparwidth 40pt
\marginparsep 10pt \topmargin 0pt \headsep 10pt \textheight 9in
\textwidth 6.7in \setlength{\parskip}{1ex plus 0.5ex minus 0.2ex}
\pagestyle{plain}
\begin{document}
\newcommand{\m}{\textnormal{mult}}
\newcommand{\ra}{\rightarrow}
\newcommand{\la}{\leftarrow}
\renewcommand{\baselinestretch}{1.1}

\theoremstyle{plain}
\newtheorem{thm}{Theorem}[section]
\newtheorem{cor}[thm]{Corollary}
\newtheorem{con}[thm]{Conjecture}
\newtheorem{cla}[thm]{Claim}
\newtheorem{lm}[thm]{Lemma}
\newtheorem{prop}[thm]{Proposition}
\newtheorem{example}[thm]{Example}

\theoremstyle{definition}
\newtheorem{dfn}[thm]{Definition}
\newtheorem{alg}[thm]{Algorithm}
\newtheorem{prob}[thm]{Problem}
\newtheorem{rem}[thm]{Remark}

\renewcommand{\baselinestretch}{1.1}

\title{\bf Generalized $D$-graphs for Nonzero Roots of the Matching Polynomial}
\author{
Cheng Yeaw Ku
\thanks{ Department of Mathematics, National University of Singapore, Singapore 117543. E-mail: matkcy@nus.edu.sg} \and K.B. Wong \thanks{
Institute of Mathematical Sciences, University of Malaya, 50603 Kuala Lumpur, Malaysia. E-mail:
kbwong@um.edu.my.} } \maketitle

\begin{abstract}\noindent
Recently, Bauer et al. (J Graph Theory 55(4) (2007), 343--358) introduced a graph operator $D(G)$, called the $D$-graph of $G$, which has been useful in investigating the structural aspects of maximal Tutte sets in $G$ with a perfect matching. Among other results, they proved a characterization of maximal Tutte sets in terms of maximal independent sets in the graph $D(G)$ and maximal extreme sets in $G$. This was later extended to graphs without perfect matchings by Busch et al. (Discrete Appl. Math. 155 (2007), 2487--2495). Let $\theta$ be a real number and $\mu(G,x)$ be the matching polynomial of a graph $G$. Let $\textnormal{mult} (\theta, G)$ be the multiplicity of $\theta$ as a root of $\mu(G,x)$. We observe that the notion of $D$-graph is implicitly related to $\theta=0$. In this paper, we give a natural generalization of the $D$-graph of $G$ for any real number $\theta$, and denote this new operator by $D_{\theta}(G)$, so that $D_{\theta}(G)$ coincides with $D(G)$ when $\theta=0$. We prove a characterization of maximal $\theta$-Tutte sets which are $\theta$-analogue of maximal Tutte sets in $G$. In particular, we show that for any $X \subseteq V(G)$, $|X|>1$, and any real number $\theta$, $\m(\theta, G \setminus X)=\m(\theta, G)+|X|$ if and only if $\m(\theta, G \setminus uv)=\m(\theta, G)+2$ for any $u, v \in X$, $u \not = v$, thus extending the preceding work of Bauer et al. and Busch et al. which established the result for the case $\theta=0$. Subsequently, we show that every maximal $\theta$-Tutte set $X$ is matchable to an independent set $Y$ in $G$; moreover, $D_{\theta}(G)$ always contains an isomorphic copy of the subgraph induced by $X \cup Y$. To this end, we introduce another related graph $S_{\theta}(G)$ which is a supergraph of $G$, and prove that $S_{\theta}(G)$ and $G$ have the same Gallai-Edmonds decomposition with respect to $\theta$. Moreover, we determine the structure of $D_{\theta}(G)$ in terms of its Gallai-Edmonds decomposition and prove that $D_{\theta}(S_{\theta}(G))=D_{\theta}(G)$.
\end{abstract}

\bigskip\noindent
{\sc keywords:} matching polynomial, Gallai-Edmonds decomposition, Tutte sets, extreme sets, $D$-graphs

\section{Introduction}

All the graphs in this paper are simple and finite. The vertex set and edge set of a graph $G$ will be denoted by $V(G)$ and $E(G)$ respectively.

\begin {dfn}\label{Intro}  An $r$-\emph {matching} in a graph $G$ is a set of $r$ edges, no two of which have a vertex in common. The number of $r$-matchings in $  G$ will be denoted by $p( G,r)$. We set $p(G,0)=1$ and define the \emph {matching polynomial} of $G$ by
\begin {equation}
\mu ( G,x)=\sum_{r=0}^{\lfloor n/2\rfloor} (-1)^rp(G,r)x^{n-2r}.\notag
\end {equation}
Let $u\in V(G)$. The graph obtained from $G$ by deleting the vertex $u$ and all edges that contain $u$ will be denoted by $G\setminus u$. Inductively, if $u_1,\dots, u_k\in V(G)$ then $G\setminus u_1\dots u_k=(G\setminus u_1\dots u_{k-1})\setminus u_k$. Note that the order in which the vertices are being deleted is not important, that is, if $i_1,\dots, i_k$ is a permutation  of $1,\dots, k$, we have  $G\setminus u_1\dots u_k= G\setminus u_{i_1}\dots u_{i_k}$. Furthermore if  $X=\{u_1,\dots, u_k\}$, we write $G\setminus X=G\setminus u_1\dots u_k$. Similarly, if $H$ is a subgraph of $G$ and $V(H)=\{v_1,\dots, v_k\}$, we write $G\setminus H=G\setminus v_1\dots v_k$.

Let $e_1,e_2,\dots, e_k\in E(G)$. We shall denote the graph obtained from $G$ by deleting the edges $e_1,e_2,\dots, e_k$ by $G-e_1e_2\dots e_k$.
\end {dfn}

It is well known that all roots of $\mu(G,x)$ are real. Throughout, let $\theta$ be a real number and $\textnormal {mult} (\theta, G)$ denote the multiplicity of $\theta$ as a root of $\mu(G,x)$. In particular, $\textnormal {mult} (\theta, G)=0$ if and only if $\theta$ is not a root of $\mu(G,x)$, if and only if $G$ has a perfect matching. In the literature, $\m(0,G)$ is also known as the {\em deficiency} of $G$, usually denoted by $\textnormal{def}(G)$.

\begin {lm}\label{interlacing}\textnormal{\cite[Corollary 1.3 on p. 97]{G0} (Interlacing)} Let $  G$ be a graph  and $u\in V(  G)$. Then
\begin {equation}
\textnormal {mult} (\theta,   G)-1\leq \textnormal {mult} (\theta,   G\setminus u)\leq \textnormal {mult} (\theta,  G)+1.\notag
\end {equation}
\end {lm}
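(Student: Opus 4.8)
The plan is to derive the two-sided bound from the stronger and classical fact that the roots of $\mu(G\setminus u,x)$ interlace those of $\mu(G,x)$. List the roots of $\mu(G,x)$ as $\lambda_1\geq\cdots\geq\lambda_n$ and those of $\mu(G\setminus u,x)$ as $\eta_1\geq\cdots\geq\eta_{n-1}$; interlacing is the assertion
\begin{equation}
\lambda_1\geq\eta_1\geq\lambda_2\geq\eta_2\geq\cdots\geq\eta_{n-1}\geq\lambda_n.\notag
\end{equation}
Granting this, the lemma is a counting argument. If $\m(\theta,G)=m$, write $\lambda_a=\cdots=\lambda_{a+m-1}=\theta$ for the block of roots equal to $\theta$. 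For $0\leq j\leq m-2$ the relation $\lambda_{a+j}\geq\eta_{a+j}\geq\lambda_{a+j+1}$ traps $\eta_{a+j}$ between two copies of $\theta$, so $\eta_{a+j}=\theta$; this gives at least $m-1$ roots of $\mu(G\setminus u,x)$ equal to $\theta$, whence $\m(\theta,G\setminus u)\geq m-1$. Conversely, if $\eta_j=\theta$ then $\lambda_j\geq\theta\geq\lambda_{j+1}$, which together with $\lambda_{a-1}>\theta>\lambda_{a+m}$ forces $a-1\leq j\leq a+m-1$; hence at most $m+1$ of the $\eta_j$ equal $\theta$ and $\m(\theta,G\setminus u)\leq m+1$.

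It remains to justify the interlacing of roots. I would first dispose of the case where $G$ is a forest: here $\mu(G,x)$ equals the characteristic polynomial of the adjacency matrix $A(G)$, since the extra terms distinguishing the characteristic polynomial from the matching polynomial come from cycles, of which a forest has none. Deleting $u$ corresponds to deleting the row and column of $A(G)$ indexed by $u$, so the interlacing is precisely Cauchy's interlacing theorem for a real symmetric matrix and a principal submatrix. For an arbitrary graph I would invoke Godsil's path tree $T=T(G,u)$, rooted at a copy of $u$, together with the identity
\begin{equation}
\frac{\mu(G\setminus u,x)}{\mu(G,x)}=\frac{\mu(T\setminus u,x)}{\mu(T,x)},\notag
\end{equation}
obtained by iterating the vertex recurrence $\mu(G,x)=x\,\mu(G\setminus u,x)-\sum_{v\sim u}\mu(G\setminus uv,x)$ and reading it as a terminating continued fraction. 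Since $T$ is a tree, the forest case shows the right-hand side is a ratio of real-rooted polynomials with interlacing zeros and poles, and transporting this through the identity yields the interlacing for $G$.

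I expect the passage from forests to general graphs to be the only real difficulty: the forest case and the final counting step are routine, whereas setting up the path tree, proving the continued-fraction identity, and checking that it transfers interlacing (and not merely real-rootedness) require the genuine machinery of matching-polynomial theory. As the statement is exactly Corollary~1.3 of \cite{G0}, in the write-up I would simply cite that result; the sketch above records the argument I would otherwise reconstruct.
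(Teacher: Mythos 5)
The paper offers no proof of this lemma: it is quoted verbatim from Godsil's book (Corollary 1.3 on p.~97 of \cite{G0}), so citing it, as you propose to do in the write-up, is exactly what the authors do. Your reconstruction is the standard argument and is sound: the counting step from root interlacing to the multiplicity bound is correct, the reduction of forests to Cauchy interlacing via the coincidence of $\mu$ with the characteristic polynomial is correct, and the path-tree identity is the right tool for general $G$; the only place needing real care is the ``transfer'' step, where one should note that $\mu(T\setminus u,x)/\mu(T,x)$ is a sum of terms $b_j/(x-\tau_j)$ with $b_j\geq 0$, so the reduced rational function $\mu(G\setminus u,x)/\mu(G,x)$ has only simple poles and simple zeros, which already yields both inequalities without needing the full interlacing of all roots with multiplicity.
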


\noindent As a consequence of Lemma \ref  {interlacing}, we can classify the vertices in a graph with respect to $\theta$ as follows:

\begin {dfn}\label {P:D3}\textnormal {\cite [Section 3]{G}} For any $u\in V(  G)$,
\begin {itemize}
\item [(a)] $u$ is $\theta$-\emph {essential} if $\textnormal {mult} (\theta,   G\setminus u)=\textnormal {mult} (\theta,   G)-1$,
\item [(b)] $u$ is $\theta$-\emph {neutral} if $\textnormal {mult} (\theta,   G\setminus u)=\textnormal {mult} (\theta,   G)$,
\item [(c)] $u$ is $\theta$-\emph {positive} if $\textnormal {mult} (\theta,   G\setminus u)=\textnormal {mult} (\theta,   G)+1$.
\end {itemize}
Furthermore if $u$ is not $\theta$-essential but is adjacent to some $\theta$-essential vertex, we say that $u$ is $\theta$-{\em special}.
\end {dfn}

The subgraph of $G$ induced by $\theta$-essential vertices plays an important role in the Gallai-Edmonds decomposition of $G$. Indeed, it consists of components such that every vertex is $\theta$-essential in each of the components. Such a component is called $\theta$-{\em critical}. It is worth noting that a connected graph is factor-critical if and only if it is $0$-critical.

Recently, a graph operator $D(G)$, called the $D$-graph of $G$, was introduced by Bauer et al. \cite{BBMS} for graphs with a perfect matching. This notion was later extended to general graphs by Busch et al. \cite{BFK}.

\begin {dfn}\label{D-graph}
Let $G$ be a graph. The graph $D(G)$ is defined as follows:
\begin {itemize}
\item[(a)] $V(D(G))=V(G)$, and
\item[(b)] $(x,y) \in E(D(G))$ if and only if $\m(0, G \setminus xy) \le \m(0, G)$.
\end {itemize}
\end {dfn}

Let $X$ be a subset of $V(G)$. Recall that $X$ is a {\em Tutte set} in $G$ if $\omega_{o}(G\setminus X)=\m(0, G)+|X|$, where $\omega_{o}(G)$ denotes the number of odd components of $G$. Another standard term for Tutte set in the literature is {\em barrier} (see \cite{Lo}). If $\m(0, G \setminus X) = \m(0,G)+|X|$, we say that $X$ is an {\em extreme set} in $G$.

The following theorem summarizes the main structural result in \cite{BBMS} and \cite{BFK}:

\begin {thm}\label{Tutte}
Let $G$ be a graph and $X \subseteq V(G)$, $\vert X\vert>1$. The followings are equivalent:
\begin {itemize}
\item[(a)] $X$ is a maximal Tutte set in $G$,
\item[(b)] $X$ is a maximal extreme set in $G$,
\item[(c)] $X$ is a maximal independent set in $D(G)$.
\end {itemize}
\end {thm}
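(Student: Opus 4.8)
The plan is to prove the two equivalences (b)$\Leftrightarrow$(c) and (a)$\Leftrightarrow$(b) separately, with the whole argument resting on a single reduction: expressing the bulk condition $\m(0,G\setminus X)=\m(0,G)+|X|$ through conditions on \emph{pairs} of vertices, since both the adjacency relation of $D(G)$ and the notion of a barrier are pairwise in nature.

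For (b)$\Leftrightarrow$(c) I would first pin down the edges of $D(G)$. For distinct $u,v$, interlacing (Lemma~\ref{interlacing}) applied twice gives $\m(0,G\setminus uv)\le \m(0,G)+2$, while the parity identity $\m(0,H)\equiv |V(H)|\pmod{2}$ forces $\m(0,G\setminus uv)\equiv\m(0,G)\pmod{2}$. Hence $(u,v)\notin E(D(G))$, i.e.\ $\m(0,G\setminus uv)>\m(0,G)$, is equivalent to $\m(0,G\setminus uv)=\m(0,G)+2$; so $X$ is independent in $D(G)$ exactly when every pair in $X$ is \emph{pairwise extreme}. It then remains to identify pairwise extreme sets with extreme sets. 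One inclusion is easy: extreme sets are downward closed, because if $\m(0,G\setminus X)=\m(0,G)+|X|$ then applying interlacing to the deletion of $X\setminus Y$ from $G\setminus Y$ forces $\m(0,G\setminus Y)=\m(0,G)+|Y|$ for every $Y\subseteq X$; in particular every pair in an extreme set is extreme, giving (b)$\Rightarrow$(c). The reverse implication (c)$\Rightarrow$(b)---assembling the pairwise conditions into the global one---is the crux, expanded on below.

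For (a)$\Leftrightarrow$(b), the direction Tutte $\Rightarrow$ extreme is direct: since each odd component of $G\setminus X$ leaves at least one vertex unmatched, $\omega_o(G\setminus X)\le \m(0,G\setminus X)$, and with interlacing $\m(0,G\setminus X)\le\m(0,G)+|X|$ this sandwiches a Tutte set ($\omega_o(G\setminus X)=\m(0,G)+|X|$) into equality, so $X$ is extreme. For the converse at the maximal level I would show that a maximal extreme set $X$ must be Tutte: apply the Gallai--Edmonds decomposition to $H=G\setminus X$ and let $A$ be its canonical barrier, so that $\omega_o(H\setminus A)=\m(0,H)+|A|$ and $A$ is itself extreme in $H$ (its complement being a disjoint union of factor-critical and perfectly matchable components, whence $\m(0,H\setminus A)=\m(0,H)+|A|$). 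If $X$ were not Tutte then $\omega_o(H)<\m(0,H)$ forces $A\neq\emptyset$, and $\m(0,G\setminus(X\cup A))=\m(0,G\setminus X)+|A|=\m(0,G)+|X\cup A|$ exhibits an extreme set $X\cup A\supsetneq X$, contradicting maximality. Since every Tutte set is extreme and every maximal extreme set is Tutte, the maximal members of the two families coincide.

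The main obstacle is (c)$\Rightarrow$(b): showing that if every pair in $X$ is extreme then $X$ is extreme. I would argue by induction on $|X|$, with $|X|=2$ vacuous. Writing $X=X'\cup\{w\}$ and noting that every pair in $X'$ is again extreme, the inductive hypothesis makes $X'$ extreme, so it suffices to prove that $w$ is $0$-positive in $G\setminus X'$, that is $\m(0,G\setminus X)=\m(0,G\setminus X')+1$. This is the delicate point, since positivity of $w$ in $G$ and in each single-vertex deletion $G\setminus u$ ($u\in X'$) does not automatically survive deleting all of $X'$ at once; it must be extracted from the matching structure. I expect to obtain it from the Gallai--Edmonds decomposition of $G$ relative to $0$ together with an augmenting/exchange argument, showing that the vertices of a pairwise-extreme set can be accommodated simultaneously so that their deletions act independently on the deficiency. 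Finally I would settle the routine bookkeeping linking set-level and maximal-level statements: a single vertex is always independent in $D(G)$ but is extreme only if it is $0$-positive, yet under the standing hypothesis $|X|>1$ the maximal members of all three families have size at least two, so the set-level identification of extreme sets with independent sets of $D(G)$ (and with Tutte sets) promotes to the asserted equivalence of maximal objects.
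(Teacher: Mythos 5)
Your architecture is reasonable and two of its three pieces are essentially complete and correct: the identification of non-edges of $D(G)$ with extreme pairs (interlacing plus the parity of the deficiency), the downward closure of extreme sets giving (b)$\Rightarrow$(c), and the equivalence (a)$\Leftrightarrow$(b) via the canonical barrier $A$ of $H=G\setminus X$ (with the standard lattice argument transferring the statement to maximal members) all hold up, and the last of these parallels what the paper does for general $\theta$ in Theorem \ref{max-Tutte}, with $\theta$-critical components playing the role your odd components play here.

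The genuine gap is exactly where you flag it: the implication (c)$\Rightarrow$(b), that a pairwise extreme set is extreme. Your reduction of the inductive step to ``$w$ is $0$-positive in $G\setminus X'$'' is correct bookkeeping, but the sentence ``I expect to obtain it from the Gallai--Edmonds decomposition \ldots{} together with an augmenting/exchange argument'' is a statement of intent, not an argument, and this implication is precisely the nontrivial content of the theorem of Bauer et al.\ and Busch et al. (Note that the paper itself does not reprove it: Theorem \ref{Tutte} is imported from \cite{BBMS} and \cite{BFK}, and even the base case of Lemma \ref{base_nice_case} falls back on the $\theta=0$ statement via Theorem \ref{extreme_zero}.) No soft interlacing argument will do here: knowing that $w$ is positive in $G$ and that each pair $\{u,w\}$ with $u\in X'$ is extreme does not formally force $w$ to stay positive after deleting all of $X'$ at once, since positivity can be destroyed by simultaneous deletions even when it survives each single deletion. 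The paper's proof of the $\theta$-analogue of this very step (Lemma \ref{base_nice_case} together with Theorem \ref{general_nice_case}) gives a sense of what is required: a delicate two-case analysis on a base case of size three using the recurrences of Theorem \ref{basic_property}, the stability of positive vertices under deletions (Lemma \ref{godsil_positive}, Lemma \ref{all_positive}), and an induction that reduces to triples rather than peeling off one vertex at a time. Until you supply the actual exchange/alternating-path argument for your inductive step (or an equivalent structural argument), the proof is incomplete at its central point.
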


The above has proven useful in investigating maximal Tutte sets. For example, it has been instrumental in determining the complexity of finding maximum Tutte sets for several interesting classes of graphs \cite{BBKMSS}.

To generalize the preceding result for nonzero real $\theta$, we need a $\theta$-analogue of $D(G)$. The following is a natural generalization of $D(G)$ for general $\theta$:

\begin {dfn}\label{D-graph-2}
Let $G$ be a graph and $\theta$ be a real number. The graph $D_{\theta}(G)$ is defined as follows:
\begin {itemize}
\item[(a)] $V(D_{\theta}(G))=V(G)$, and
\item[(b)] $(x,y) \in E(D_{\theta}(G))$ if and only if $\m(\theta, G \setminus xy) \le \m(\theta, G)$.
\end {itemize}
\end {dfn}

We also require a $\theta$-analogue of Tutte sets and extreme sets. The corresponding definitions were first introduced in \cite{KW}:

\begin {dfn}
Suppose $X \subseteq V(G)$.
\begin {itemize}
\item[(a)] $X$ is a $\theta$-Tutte set if $c_{\theta}(G \setminus X) = \m(\theta, G) + |X|$, where $c_{\theta}(G)$ denotes the number of $\theta$-critical components of $G$.
\item[(b)] $X$ is a $\theta$-extreme set if $\m(\theta, G \setminus X) = \m(\theta, G)+|X|$.
\end {itemize}
\end {dfn}

Note that the definitions of $0$-extreme set and extreme set coincide. But the definitions of $0$-Tutte
set and Tutte set are different. Nevertheless, the definition of a $\theta$-Tutte set is not unmotivated. Indeed, it is motivated by a $\theta$-analogue of Berge's formula proved by the authors in \cite{KW}. Interested readers may refer to \cite{KW} for a more detailed description of $\theta$-Tutte sets and $\theta$-extreme sets.

One of our main results is the following:

\begin {thm}\label{Main01}
Let $G$ be a graph, $X \subseteq V(G)$, $|X|>1$, and $\theta$ be a real number. The followings are equivalent:
\begin {itemize}
\item[(a)] $X$ is a maximal $\theta$-Tutte set in $G$,
\item[(b)] $X$ is a maximal $\theta$-extreme set in $G$,
\item[(c)] $\m(\theta, G \setminus uv)=\m(\theta, G)+2$ for any $u, v \in X$, $u \not = v$.
\end {itemize}
\end {thm}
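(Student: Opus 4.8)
The plan is to prove the equivalences by first isolating a purely local consequence of interlacing and then feeding it into an induction. The elementary engine is Lemma~\ref{interlacing}: if $X$ is a $\theta$-extreme set, so that $\m(\theta,G\setminus X)=\m(\theta,G)+|X|$, then deleting the vertices of $X$ one at a time can raise the multiplicity by at most $1$ per step, so the full gain of $|X|$ over $|X|$ steps forces every single deletion to gain exactly $1$; since the result of deleting a set is independent of the order, this holds for \emph{every} ordering and yields $\m(\theta,G\setminus u)=\m(\theta,G)+1$ for each $u\in X$ and $\m(\theta,G\setminus uv)=\m(\theta,G)+2$ for each pair $u\ne v$. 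Applied to a maximal $\theta$-extreme set this gives (b)$\Rightarrow$(c) at once, and I expect this direction to be routine.

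The substance of the theorem is the converse: the pairwise condition (c) already forces $X$ to be $\theta$-extreme. First, (c) makes every $u\in X$ $\theta$-positive, from $\m(\theta,G\setminus uv)=\m(\theta,G)+2$ and two applications of interlacing. I would then induct on $|X|$: fixing $w\in X$ and $X'=X\setminus\{w\}$, the hypothesis (c) is inherited by $X'$, so by induction $X'$ is $\theta$-extreme and $\m(\theta,G\setminus X')=\m(\theta,G)+|X|-1$; it then suffices to show $w$ is $\theta$-positive in $G\setminus X'$, for then $\m(\theta,G\setminus X)=\m(\theta,G\setminus X')+1=\m(\theta,G)+|X|$. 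The needed statement is a \emph{propagation of positivity}: $w$ is $\theta$-positive in $G$ and in each $G\setminus x$ with $x\in X'$, and I must lift this to $G\setminus X'$. This is exactly the step where interlacing is powerless, since interlacing only bounds multiplicities from above whereas here one must \emph{produce} the extra root. I expect this to be the main obstacle, and I would resolve it with the $\theta$-Gallai--Edmonds machinery of \cite{KW}: pairwise $\theta$-positive, pairwise-compatible vertices avoid the $\theta$-essential part and sit against a common $\theta$-critical structure, so that their multiplicity contributions are independent and add, keeping $w$ $\theta$-positive after all of $X'$ is removed.

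It remains to link (a) with (b), which I would do through the $\theta$-analogue of Berge's formula from \cite{KW}, $\m(\theta,G)=\max_{W\subseteq V(G)}\big(c_\theta(G\setminus W)-|W|\big)$. Evaluating this on $G\setminus X$ with the choice $W=\emptyset$ gives $\m(\theta,G\setminus X)\ge c_\theta(G\setminus X)$, so if $X$ is a $\theta$-Tutte set then $\m(\theta,G)+|X|=c_\theta(G\setminus X)\le \m(\theta,G\setminus X)\le \m(\theta,G)+|X|$ by interlacing, forcing equality; hence every $\theta$-Tutte set is $\theta$-extreme, and at the top a maximal $\theta$-Tutte set is a maximal $\theta$-extreme set. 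For the reverse inclusion I would show that a maximal $\theta$-extreme set attains $c_\theta(G\setminus X)=\m(\theta,G)+|X|$ and is therefore $\theta$-Tutte, so the two families have the same maximal members. Combined with the first two paragraphs, which identify the $\theta$-extreme sets with the sets satisfying (c), this closes (a)$\Leftrightarrow$(b)$\Leftrightarrow$(c). I would take care with the maximality bookkeeping: (c) characterizes the $\theta$-extreme sets, and it is the passage to maximal representatives---mirroring the $\theta=0$ statement in Theorem~\ref{Tutte}, where (c) is phrased through \emph{maximal} independent sets in $D(G)$---that brings all three maximal notions into coincidence.
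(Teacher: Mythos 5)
Your reductions of (a) to (b) and of (b) to (c) are sound and essentially match the paper: a $\theta$-Tutte set is $\theta$-extreme because $c_\theta(G\setminus X)\le \m(\theta,G\setminus X)\le \m(\theta,G)+|X|$ (the first inequality is the $\theta$-Gallai Lemma, equivalently your Berge-type bound), and (b)$\Rightarrow$(c) is the order-independence-of-deletion argument the paper relegates to Proposition \ref{one_side}. For the converse of (a)$\Leftrightarrow$(b) you should at least record the one observation that does the work: maximality of a $\theta$-extreme set $X$ forces $G\setminus X$ to have no $\theta$-positive vertices, whereupon GEST makes every component of $G\setminus X$ containing $\theta$ as a root $\theta$-critical with multiplicity exactly $1$, and the count $c_\theta(G\setminus X)=\m(\theta,G\setminus X)$ follows. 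You gesture at the conclusion without this step.

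The genuine gap is (c)$\Rightarrow$(b), which you correctly identify as the crux and then do not prove. The claim that pairwise-compatible $\theta$-positive vertices ``sit against a common $\theta$-critical structure, so that their multiplicity contributions are independent and add'' is essentially a restatement of what must be shown, and it is false as a structural principle: membership in $P_\theta(G)$ does not determine whether positivity propagates, since two $\theta$-positive vertices $u,v$ in the same component $Q_{j}$ of $G\setminus A_\theta(G)$ with $\m(\theta,Q_{j})=0$ may or may not satisfy $\m(\theta,G\setminus uv)=\m(\theta,G)+2$ depending on the internal structure of $Q_{j}$ (compare Theorem \ref{d_graph_2_positive}(d), where the condition reduces to $D_{2,\theta}(Q_{j})$ rather than to the decomposition). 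The Gallai--Edmonds machinery therefore cannot finish this step on its own. What is actually needed is the dichotomy of Lemma \ref{all_positive} (deleting $k$ positive vertices yields multiplicity $+k$ or at most $+k-2$) together with an argument ruling out the lower branch; the paper does this in the base case $|X|=3$ (Lemma \ref{base_nice_case}) by producing, via the three-term recurrence of Theorem \ref{basic_property}(c), a neighbor $z$ of $x_1$ that is $\theta$-neutral in $G\setminus x_2x_3x_1$ (Lemma \ref{neutral_neighbor}, which is where $\theta\ne 0$ enters), and then deriving a contradiction from the edge-deletion identity $\mu(G,x)=\mu(G-e,x)-\mu(G\setminus x_1z,x)$ in two cases according to the type of $z$ in $G\setminus x_1$. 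Your peel-one-vertex induction does not avoid this: for $|X|=3$ it \emph{is} exactly the unproved base case, and the paper's induction for larger $X$ also reduces to it (by removing $X\setminus\{a,b,c\}$ first) rather than proceeding one vertex at a time. Without this argument, or a substitute for it, the proof of the central implication is missing.
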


It is clear that conditions (b) of Theorem \ref{Main01} and Theorem \ref{Tutte} are the same when $\theta=0$. In fact, we shall see later that conditions (a) and (c) of Theorem \ref{Main01} and Theorem \ref{Tutte} are also equivalent when $\theta=0$. Therefore, Theorem \ref{Main01} can be regarded as an extension of Theorem \ref{Tutte} to general $\theta$.

In this paper, we introduce another related graph $S_{\theta}(G)$ which is a supergraph of $G$ obtained by joining any $\theta$-special vertex to all the other vertices in $G$. Note that if $G$ has no $\theta$-special vertices then $S_{\theta}(G)=G$. We shall establish the followings:

\begin {thm}\label{Main03}
Let $G$ be a graph and $\theta$ be a real number. Then $G$ and $S_{\theta}(G)$ have the same Gallai-Edmonds decomposition.
\end {thm}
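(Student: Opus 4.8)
The plan is to reduce everything to a single statement: that $G$ and $S_{\theta}(G)$ have the same set of $\theta$-essential vertices. Once this is in hand, the $\theta$-special vertices — and hence the entire decomposition — must also agree. Write $\mathcal{D}$, $A$, $C$ for the sets of $\theta$-essential, $\theta$-special, and remaining vertices of $G$, and set $m=\m(\theta,G)$. Two elementary observations drive the argument. First, every edge of $S_{\theta}(G)$ not already in $G$ is incident to $A$; consequently $S_{\theta}(G)\setminus Z=G\setminus Z$ for every $Z\supseteq A$, the induced subgraph on $\mathcal{D}$ is unchanged, and no vertex of $C$ acquires a neighbour in $\mathcal{D}$. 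Second, in $S_{\theta}(G)$ each $w\in A$ is adjacent to every other vertex, so it dominates $S_{\theta}(G)\setminus Z$ for any $Z\not\ni w$; such a graph is therefore connected and has at most one $\theta$-critical component. The central tool is the $\theta$-analogue of Berge's formula from \cite{KW}, namely $\m(\theta,H)=\max_{X\subseteq V(H)}\big(c_{\theta}(H\setminus X)-|X|\big)$, together with the structural identity $c_{\theta}(G\setminus A)=c_{\theta}(G[\mathcal{D}])=m+|A|$ supplied by the $\theta$-Gallai-Edmonds theory.

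First I would establish $\m(\theta,S_{\theta}(G))=m$. For ``$\ge$'' apply the formula with $X=A$ and use $S_{\theta}(G)\setminus A=G\setminus A$. For ``$\le$'' I bound $c_{\theta}(S_{\theta}(G)\setminus X)-|X|$ for every $X$: if $A\subseteq X$ the term equals $c_{\theta}(G\setminus X)-|X|\le m$; if some $w\in A$ lies outside $X$, then $w$ dominates $S_{\theta}(G)\setminus X$, forcing $c_{\theta}\le1$, so the term is at most $1\le m$ (note $A\neq\emptyset$ forces $\mathcal{D}\neq\emptyset$, whence $m\ge1$). Next comes the easy half of the essential-set comparison: every non-$\theta$-essential vertex of $G$ stays non-essential. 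For $u\in A$ the formula with $X=A\setminus\{u\}$ gives $\m(\theta,S_{\theta}(G)\setminus u)\ge m+1$, and for $u\in C$ the choice $X=A$ gives $\ge m$; in either case $u$ cannot be $\theta$-essential in $S_{\theta}(G)$.

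The substantial direction is to show every $\theta$-essential $u$ of $G$ remains $\theta$-essential in $S_{\theta}(G)$, i.e.\ $\m(\theta,S_{\theta}(G)\setminus u)\le m-1$. Testing all $X$ in the formula for $S_{\theta}(G)\setminus u$: if $A\subseteq X$ the term equals $c_{\theta}((G\setminus u)\setminus X)-|X|\le\m(\theta,G\setminus u)=m-1$; if some $w\in A$ lies outside $X$, domination gives a term at most $1-|X|$, which is $\le m-1$ as soon as $X\neq\emptyset$ (since $m\ge1$) or $m\ge2$. The one remaining case — and the main obstacle — is $m=1$, $X=\emptyset$, $A\neq\emptyset$, where I must prove $c_{\theta}(S_{\theta}(G)\setminus u)=0$, i.e.\ that the connected graph $S_{\theta}(G)\setminus u$ is not $\theta$-critical. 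I plan to handle this by fixing any $w\in A$ and computing, via the formula with $X=A\setminus\{w\}$, that $\m(\theta,S_{\theta}(G)\setminus\{u,w\})\ge1$: indeed $(S_{\theta}(G)\setminus\{u,w\})\setminus(A\setminus\{w\})=(G\setminus A)\setminus u$, whose $\theta$-critical components are precisely the $|A|$ components of $G[\mathcal{D}]$ that survive deleting $u$ from its ($\theta$-critical, hence multiplicity-one) component. Were $S_{\theta}(G)\setminus u$ itself $\theta$-critical, $w$ would be $\theta$-essential in it, so deleting $w$ would drop the multiplicity to $0$, contradicting this bound; hence $c_{\theta}(S_{\theta}(G)\setminus u)=0$ and the estimate closes.

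Finally I would assemble the pieces. The two directions give that $G$ and $S_{\theta}(G)$ have identical $\theta$-essential sets $\mathcal{D}$. Since every new edge is incident to $A$, each vertex of $A$ keeps its $\theta$-essential neighbour and stays $\theta$-special, while no vertex of $C$ gains a $\theta$-essential neighbour, so $A$ and $C$ are preserved; and because $S_{\theta}(G)[\mathcal{D}]=G[\mathcal{D}]$, the $\theta$-critical component structure is identical, so the two Gallai-Edmonds decompositions coincide. The only genuine difficulty is the $m=1$ borderline above; every other step is a direct application of the $\theta$-Berge formula to a carefully chosen vertex set, and I expect the delicate part to hinge exactly on exploiting that a $\theta$-special vertex becomes dominating in $S_{\theta}(G)$.
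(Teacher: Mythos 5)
Your proposal is correct, but it reaches Theorem \ref{Main03} by a genuinely different route than the paper. The paper proves the finer, incremental statement (Theorem \ref{BP:T3a}) that adding a \emph{single} edge from a $\theta$-special vertex $u$ to a non-neighbour $v$ preserves $\m(\theta,\cdot)$ and all four classes $B_\theta, A_\theta, N_\theta, P_\theta$; its proof is a case analysis on the class of $v$, driven by the recurrence $\mu(G',x)=\mu(G,x)-\mu(G\setminus uv,x)$, interlacing, and the $\theta$-Stability Lemma (Theorem \ref{P:T5}), and the theorem follows by iterating over the added edges. You instead add all edges at once and argue globally via the $\theta$-analogue of Berge's formula $\m(\theta,H)=\max_{X}\bigl(c_\theta(H\setminus X)-|X|\bigr)$, exploiting that a special vertex becomes dominating in $S_\theta(G)$ and hence caps $c_\theta$ at $1$ whenever it survives the deletion. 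That formula is a legitimate tool: it is the result of \cite{KW} that the paper cites as the motivation for $\theta$-Tutte sets, and in any case it follows from $c_\theta(G\setminus X)\le\m(\theta,G\setminus X)\le\m(\theta,G)+|X|$ (noted at the start of Section 3) together with part (ii) of Corollary \ref{P:C7}. Your handling of the one delicate case ($m=1$, $X=\varnothing$) is sound: the bound $\m(\theta,S_\theta(G)\setminus\{u,w\})\ge 1$ combined with the $\theta$-Gallai Lemma correctly rules out $S_\theta(G)\setminus u$ being $\theta$-critical. Two minor points you should make explicit: for $u\in C$ the bound $\ge m$ uses that $u$ lies outside every $\theta$-critical component of $G\setminus A$ (part (iv) of Corollary \ref{P:C7}), so those components survive the deletion; and the trivial case $A=\varnothing$ (where $S_\theta(G)=G$) should be dispatched first so that $m\ge 1$ is available throughout. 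What the paper's route buys is the edge-by-edge invariance and the separate preservation of $N_\theta$ and $P_\theta$, which are reused later (e.g.\ in Theorem \ref{extreme_set_in_S} and Corollary \ref{S:C6}); what yours buys is a shorter, more conceptual argument that avoids the matching-polynomial recurrences entirely, at the cost of importing the min-max formula and proving only what Theorem \ref{Main03} itself requires.
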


\begin {thm}\label{Main04} If $G$ and $G'$ have the same Gallai-Edmonds decomposition with respect to $\theta$, then $D_{\theta}(G)\cong D_{\theta}(G')$. In particular, $D_{\theta}(G)=D_{\theta}(S_{\theta}(G))$.
\end {thm}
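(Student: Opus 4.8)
The plan is to show that the entire edge set of $D_\theta(G)$ can be read off from the $\theta$-Gallai-Edmonds decomposition of $G$, so that any bijection $V(G)\to V(G')$ respecting the decomposition automatically respects the edge relation defining $D_\theta$. Once such a structural description is in hand, the first assertion is immediate, and the second follows by taking $G'=S_\theta(G)$: by Theorem \ref{Main03} the identity map on $V(G)=V(S_\theta(G))$ respects the decomposition, so it is an isomorphism $D_\theta(G)\to D_\theta(S_\theta(G))$ fixing every vertex, i.e. an equality of graphs.

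To describe $E(D_\theta(G))$, I would first reduce membership of a pair $\{x,y\}$ to two successive single-vertex deletions. Writing
\[
\m(\theta,G\setminus xy)-\m(\theta,G)=\bigl[\m(\theta,G\setminus x)-\m(\theta,G)\bigr]+\bigl[\m(\theta,(G\setminus x)\setminus y)-\m(\theta,G\setminus x)\bigr],
\]
the first bracket equals $-1,0,+1$ according as $x$ is $\theta$-essential, $\theta$-neutral, or $\theta$-positive in $G$ (a quantity fixed by the decomposition), and the second bracket is the analogous type of $y$ in $G\setminus x$. Thus whether $\{x,y\}\in E(D_\theta(G))$ is governed by the type of $x$ in $G$ together with the type of $y$ in $G\setminus x$. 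The workhorse is therefore a deletion lemma recording how the $\theta$-Gallai-Edmonds decomposition of $G\setminus x$ arises from that of $G$ and the role of $x$: deleting a $\theta$-essential $x$ lying in a $\theta$-critical component $K$ should remove $K$ from the list of critical components (turning $K\setminus x$ into a $\theta$-matchable piece) while leaving the other critical components, the special set $A_\theta$, and the $C_\theta$-part intact; deleting a $\theta$-special or a $C_\theta$ vertex is handled similarly. I would establish this using the $\theta$-analogue of Berge's formula and the additivity of $\m(\theta,\cdot)$ over the components of $G\setminus A_\theta$, both available from \cite{KW}.

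With the deletion lemma, I would run the case analysis on the positions of $x$ and $y$ and show each outcome is a function of decomposition data only. I expect the description: the $\theta$-essential vertices induce a clique in $D_\theta(G)$ (two in distinct critical components give difference $-2$; two in the same component give difference $0$ or $-1$, since deleting the first renders the residual component $\theta$-matchable, so the second is neutral or positive there); the special set $A_\theta$ is independent in $D_\theta(G)$, consistent with Theorem \ref{Main01}, as $A_\theta$ is a $\theta$-Tutte set and every pair there realizes difference $+2$; each special vertex is joined in $D_\theta(G)$ to every $\theta$-essential vertex; and the restriction of $D_\theta(G)$ to $C_\theta$ is governed entirely by the induced graph $G[C_\theta]$, with the adjacencies between $C_\theta$ and $A_\theta\cup D_\theta$ likewise determined by the decomposition. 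Crucially, none of these outcomes should depend on the edges joining $A_\theta$ to the rest of $G$ — precisely the edges that $S_\theta$ alters — which is what makes the ``in particular'' clause consistent. Assembling the cases yields a description of $E(D_\theta(G))$ purely in terms of the partition $(D_\theta,A_\theta,C_\theta)$, the $\theta$-critical components, and the graph $G[C_\theta]$; any decomposition-preserving bijection is then an isomorphism of $D$-graphs.

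The main obstacle is the deletion lemma together with the $C_\theta$ case. The delicate point is interference through $A_\theta$: when $x$ is $\theta$-essential or $\theta$-special, one must rule out that deleting $x$ alters the criticality of a distant component or flips the type of a neighbour $y$ via the special vertices linking them, so that the ``second bracket'' above is genuinely a decomposition invariant. Equally delicate is showing that deleting two vertices of $C_\theta$ localizes, i.e. that $\m(\theta,G\setminus xy)-\m(\theta,G)$ agrees with the corresponding difference computed inside $G[C_\theta]$ despite the $A_\theta$–$C_\theta$ edges; here I would lean on the fact that the $C_\theta$-components carry $\m(\theta,\cdot)=0$ and contribute no $\theta$-critical components, so that an optimal $\theta$-Tutte set continues to avoid them after deletion.
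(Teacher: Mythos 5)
Your overall strategy is exactly the paper's: Section 5 culminates in Theorem \ref{d_graph_for_G}, which describes $E(D_{\theta}(G))$ purely in terms of the Gallai--Edmonds data (every vertex of $B_{\theta}(G)$ is joined to every other vertex; two vertices of $N_{\theta}(G)$ lying in distinct non-critical components $Q_{j_1},Q_{j_2}$ are joined; and inside a single $Q_{j_0}$ the edges are those of $D_{0,\theta}(Q_{j_0})$), and Corollary \ref{gallai_Edmond_decomposition_G_S} is then read off exactly as you propose, with $G'=S_{\theta}(G)$ handled by Theorem \ref{Main03} and the identity map. Your predicted final description agrees with this. The delicate point you flag --- localizing $\m(\theta,G\setminus uv)$ for $u,v\in P_{\theta}(G)\cup N_{\theta}(G)$ to the components of $G\setminus A_{\theta}(G)$ despite the $A_{\theta}$--$C_{\theta}$ edges --- is indeed where all the work is. The paper does it by first showing $A_{\theta}(G)\subseteq A_{\theta}(G\setminus u)$ for such $u$ (Lemma \ref{R:L3}, via the Stability Lemma and Lemmas \ref{godsil_positive} and \ref{Ku-Chen-neutral}), deducing that $A_{\theta}(G)$ remains a $\theta$-extreme set in $G\setminus uv$ (Theorem \ref{extreme_in_G}), and then obtaining the additive formulas of Corollary \ref{S:C3b}. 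Your sketch of this step points in the right direction but is not yet a proof.

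The one ingredient of your plan that would actually fail is the deletion lemma for a $\theta$-essential $x$: it is not true that deleting $x$ merely removes its $\theta$-critical component from the list while leaving the other critical components, $A_{\theta}(G)$, and the rest intact. Take $\theta=0$ and $G$ the path $abcde$: here $B_{0}(G)=\{a,c,e\}$ and $A_{0}(G)=\{b,d\}$, yet $G\setminus a$ is a path on four vertices with a perfect matching, so $B_{0}(G\setminus a)=A_{0}(G\setminus a)=\varnothing$ and the whole decomposition collapses. Consequently your finer assertions about essential pairs (difference $-2$ across critical components, $0$ or $-1$ within one) would need a different justification. Fortunately none of this is needed: if $x\in B_{\theta}(G)$ then $\m(\theta,G\setminus x)=\m(\theta,G)-1$ and interlacing (Lemma \ref{interlacing}) already gives $\m(\theta,G\setminus xy)\le\m(\theta,G)$ for every $y$, so such pairs are always edges of $D_{\theta}(G)$ --- this is part (a) of Theorem \ref{d_graph_for_G}; and for $x\in A_{\theta}(G)$ the $\theta$-Stability Lemma (Theorem \ref{P:T5}, recorded as Lemma \ref{BP:L4a}(d)) determines $\m(\theta,G\setminus xy)$ directly from the type of $y$, with no need to track the decomposition of $G\setminus x$. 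If you discard the general deletion lemma, dispatch the $B_{\theta}$ and $A_{\theta}$ cases by these two observations, and supply a complete proof of the localization for $P_{\theta}\cup N_{\theta}$, your argument becomes the paper's.
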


It was also proved in \cite{BBMS} and \cite{BFK} that $D(G)$ contains an isomorphic copy of $G$. In general, $D_{\theta}(G)$ does not contain an isomorphic copy of $G$. However, we can prove the following:

\begin {thm}\label{Main02}
Given any $\theta$-extreme set $X$ of $G$ with $|X|>1$, there exists an independent set $Y$ disjoint from $X$ such that $X$ is matchable to $Y$ and $D_{\theta}(G)$ contains an isomorphic copy of the subgraph of $G$ induced by $X \cup Y$.
\end {thm}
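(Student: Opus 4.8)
The plan is to prove Theorem~\ref{Main02} by first producing the matching between $X$ and an independent set $Y$, and then verifying that the induced edges on $X \cup Y$ all survive as edges in $D_{\theta}(G)$. Let $X$ be a $\theta$-extreme set with $|X| > 1$; by replacing $X$ with a maximal $\theta$-extreme set containing it (and later restricting), I may invoke Theorem~\ref{Main01} to assume $\m(\theta, G \setminus uv) = \m(\theta, G) + 2$ for all distinct $u, v \in X$. The first step is to exhibit the independent set $Y$. The natural candidate comes from the Gallai-Edmonds structure with respect to $\theta$: a maximal $\theta$-extreme set should be (a subset of) the set of $\theta$-essential vertices adjacent to the $\theta$-critical components together with the neutral part, and by a $\theta$-analogue of the matching/defect version of the Gallai-Edmonds theorem one expects $X$ to be matchable into the $\theta$-critical components, each chosen matched vertex giving one element of $Y$. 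I would argue that distinct vertices of $X$ are matched into \emph{distinct} $\theta$-critical components, so that the matched partners $Y$ form an independent set (vertices in different components of the $\theta$-critical part are non-adjacent), and $Y \cap X = \varnothing$ since $Y$ lies in the critical components while $X$ does not.

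The second step is to show $X$ itself is an independent set in $G$ and, more importantly, independent in $D_{\theta}(G)$, i.e.\ $(u,v) \notin E(D_{\theta}(G))$ for distinct $u,v \in X$. This is immediate from condition (c) of Theorem~\ref{Main01}: $\m(\theta, G \setminus uv) = \m(\theta, G) + 2 > \m(\theta, G)$, which by Definition~\ref{D-graph-2}(b) means precisely that $uv$ is a non-edge of $D_{\theta}(G)$. The isomorphic copy we want reproduces the $X$-side correctly as soon as we know $X$ induces no edges in $G$; since each $u \in X$ is matched into a $\theta$-critical component disjoint from $X$, and the endpoints of a single $\theta$-extreme deletion cannot be adjacent (adjacency would force $\m(\theta, G\setminus uv) \le \m(\theta,G)$, contradicting $+2$), $X$ is indeed independent in $G$.

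The third and most delicate step is to realize the edges of the subgraph induced by $X \cup Y$ inside $D_{\theta}(G)$. The matching edges $u y_u$ (with $y_u \in Y$ the partner of $u$) must be edges of $D_{\theta}(G)$, which requires $\m(\theta, G \setminus u y_u) \le \m(\theta, G)$; here one uses that deleting a $\theta$-essential vertex $u$ drops the multiplicity, and that removing its matched partner in the critical component keeps the balance, so interlacing (Lemma~\ref{interlacing}) applied twice bounds the combined deletion by $\m(\theta,G)$. For any other induced edge $y y'$ between two elements of $Y$, or any edge from some $x \in X$ to a $y \in Y$ that is not its own partner, I would need the corresponding inequality $\m(\theta, G \setminus \cdot) \le \m(\theta,G)$; here the passage to $S_{\theta}(G)$ is the key device. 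By Theorem~\ref{Main04}, $D_{\theta}(S_{\theta}(G)) = D_{\theta}(G)$, so it suffices to locate the induced copy inside $D_{\theta}(S_{\theta}(G))$, where the $\theta$-special vertices have been joined to everything and the Gallai-Edmonds structure is unchanged by Theorem~\ref{Main03}; this extra edge set makes the required multiplicity inequalities easier to certify while not altering the target graph $D_{\theta}(G)$.

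The main obstacle I anticipate is the matchability and distinctness claim in the first step: establishing that a maximal $\theta$-extreme set is matchable into the $\theta$-critical components with one vertex per component, so that $Y$ is genuinely independent, is exactly the $\theta$-analogue of the classical Gallai-Edmonds matching statement and will require the structural description of $D_{\theta}(G)$ via its Gallai-Edmonds decomposition (the content underlying Theorem~\ref{Main04}). Once $Y$ is in hand and known to be independent, verifying that every induced edge lifts to $D_{\theta}(G)$ is a sequence of interlacing estimates that I expect to go through routinely after reducing to $S_{\theta}(G)$.
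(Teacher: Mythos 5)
There are genuine gaps in your proposal, and the central one is your second step. You claim that $X$ must be independent in $G$, arguing that adjacency of $u,v\in X$ would force $\m(\theta,G\setminus uv)\le\m(\theta,G)$. This is false: in the path $a-u-v-b$ with $\theta=0$ we have $\m(0,G)=0$ and $\m(0,G\setminus uv)=2$, so $\{u,v\}$ is a $0$-extreme set whose two vertices are adjacent. Since $X$ is always independent in $D_{\theta}(G)$ (by $\theta$-niceness) but need not be independent in $G$, the identity map on $X$ cannot realize the induced subgraph $G[X\cup Y]$ inside $D_{\theta}(G)$; the isomorphism the paper actually uses swaps $x_i\leftrightarrow y_i$, sending an edge $x_ix_j$ of $G$ to the pair $y_iy_j$, and then certifies $(y_i,y_j)\in E(D_{\theta}(G))$ via the Heilmann--Lieb identity (Lemma \ref{nonpositive_path}): the path $y_ix_ix_jy_j$ has $\m(\theta,G\setminus P)\le\m(\theta,G)$ by Theorem \ref{nice_matching}(ii), hence $\m(\theta,G\setminus y_iy_j)\le\m(\theta,G)$. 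This swap-plus-path mechanism is the heart of the argument and is entirely absent from your plan; appealing to $S_{\theta}(G)$ and Theorem \ref{Main04} does not substitute for it, since passing to $S_{\theta}(G)$ changes neither the induced subgraph on $X\cup Y$ that must be embedded nor the target graph $D_{\theta}(G)$.

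Your first step is also not viable as stated. The same $P_4$ example has no $\theta$-critical components at all ($A_0(G)=B_0(G)=\varnothing$ and the unique component $Q_1=G$ has $\m(0,Q_1)=0$), yet $\{u,v\}$ is $0$-extreme; so a $\theta$-extreme set cannot in general be matched into distinct $\theta$-critical components, and the independence of $Y$ cannot be deduced from $Y$ lying in distinct components. The paper instead builds the matching greedily: each $x_i$ is $\theta$-positive, so by Lemma \ref{essential-neighbor} it has a neighbour $y_i$ that is $\theta$-essential after deleting $x_i$, giving $\m(\theta,G\setminus x_iy_i)=\m(\theta,G)$; one then iterates, using Theorem \ref{general_nice_case} and interlacing to keep the residual set $\theta$-nice, and the independence of $Y$ is again forced by Lemma \ref{nonpositive_path} (an edge $y_iy_j$ would create a path $x_iy_iy_jx_j$ contradicting $\m(\theta,G\setminus x_ix_j)=\m(\theta,G)+2$). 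Neither the Gallai--Edmonds decomposition nor $S_{\theta}(G)$ enters the paper's proof of this theorem.
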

Recall that a set $X$ is {\em matchable} to a set $Y$ if there is a matching in $G$ which matches every vertex of $X$ to a vertex in $Y$.

The $D$-graph $D(G)$ demonstrates interesting properties when iterated, in particular, it converges very quickly regardless of the structure of the underlying graph $G$, that is $D(D(D(G))) \equiv D(D(G))$ (see \cite{BBMS, BFK}). At the present, we do not know whether such property also holds for the $D_{\theta}$-operator.

The outline of this paper is as follows.

In Section 2, we list some basic properties of the matching polynomial and describe the Gallai-Edmonds decomposition for general root $\theta$ which is an important tool for the rest of the paper. Theorem \ref{Main01} is proved in Section 3. In Section 4, we prove Theorem \ref{Main03} which consequently allow us to establish Theorem \ref{Main04} in Section 5. Finally, in Section 6, we relate $\theta$-extreme sets with matchings and independent sets and prove Theorem \ref{Main02}.

\section{Gallai-Edmonds Decomposition}

The followings  are some basic properties of $\mu (G,x)$.

\begin {thm}\label{basic_property} \textnormal { \cite[Theorem 1.1 on p. 2] {G0}}
\begin {itemize}
\item [(a)] $\mu (  G\cup  H,x)=\mu (  G,x)\mu (  H,x)$ where $  G$ and $  H$ are disjoint graphs,
\item [(b)] $\mu (  G,x)=\mu (  G-e,x)-\mu (  G\setminus uv, x)$ if $e=(u,v)$ is an edge of $  G$,
\item [(c)] $\mu (  G,x)=x\mu (  G\setminus u,x)-\sum_{i\sim u} \mu (  G\setminus ui,x)$ where $i\sim u$ means $i$ is adjacent to $u$,
\item [(d)] $\displaystyle \frac {d}{dx} \mu (  G,x)=\sum_{i\in V(  G)} \mu (  G\setminus i,x)$ where $V(  G)$ is the vertex set of $  G$.
\end {itemize}
\end {thm}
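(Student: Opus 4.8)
The plan is to derive each of the four identities from an elementary recurrence for the matching numbers $p(G,r)$ and then substitute into the definition $\mu(G,x)=\sum_{r}(-1)^r p(G,r)\,x^{n-2r}$, keeping careful track of the degree shifts caused by vertex deletion. For part (a), I would use that $G$ and $H$ are vertex-disjoint, so every $r$-matching of $G\cup H$ splits uniquely into an $i$-matching of $G$ and a $j$-matching of $H$ with $i+j=r$; this gives the convolution $p(G\cup H,r)=\sum_{i+j=r}p(G,i)\,p(H,j)$. Writing $m=|V(G)|$ and $k=|V(H)|$, the polynomial $\mu(G\cup H,x)$ is then precisely the Cauchy product of $\mu(G,x)$ and $\mu(H,x)$, which yields the factorization.

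For parts (b) and (c), the key in each case is a single case distinction together with a reindexing of the resulting sum. In (b), I would classify each $r$-matching of $G$ by whether it contains the edge $e=(u,v)$: the matchings avoiding $e$ are exactly the $r$-matchings of $G-e$, while those containing $e$ are in bijection with the $(r-1)$-matchings of $G\setminus uv$, so $p(G,r)=p(G-e,r)+p(G\setminus uv,r-1)$. Substituting and setting $s=r-1$ in the second sum, the exponent changes from $x^{n-2r}$ to $x^{(n-2)-2s}$ and the index shift contributes the extra factor $(-1)$, producing the stated minus sign in front of $\mu(G\setminus uv,x)$. Part (c) is entirely analogous: I would classify $r$-matchings by whether the vertex $u$ is covered; if $u$ is uncovered the matching is an $r$-matching of $G\setminus u$ (contributing $x\,\mu(G\setminus u,x)$ after accounting for the drop of one vertex), and if $u$ is covered it is matched to some neighbour $i$ with the remainder an $(r-1)$-matching of $G\setminus ui$.

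Part (d) I would prove by a double-counting argument. Counting the pairs $(M,i)$ in which $M$ is an $r$-matching and $i$ a vertex left uncovered by $M$ in two ways---each $r$-matching leaves $n-2r$ vertices uncovered, and for each fixed $i$ the $r$-matchings avoiding $i$ are exactly those of $G\setminus i$---gives $\sum_{i\in V(G)}p(G\setminus i,r)=(n-2r)\,p(G,r)$. Differentiating $\mu(G,x)$ term by term produces $\sum_{r}(-1)^r(n-2r)\,p(G,r)\,x^{n-2r-1}$, and the identity above shows this equals $\sum_{i\in V(G)}\mu(G\setminus i,x)$ term by term.

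None of these steps presents a genuine obstacle, since all four identities are formal consequences of the counting recurrences. The only point demanding care is the bookkeeping of exponents and signs under reindexing: deleting a vertex lowers $n$ by one and deleting an edge together with its two endpoints lowers it by two, so the powers of $x$ must be realigned before the sums can be recombined into the claimed matching polynomials.
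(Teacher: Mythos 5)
Your proof is correct: all four identities follow exactly as you describe from the recurrences $p(G\cup H,r)=\sum_{i+j=r}p(G,i)p(H,j)$, $p(G,r)=p(G-e,r)+p(G\setminus uv,r-1)$, $p(G,r)=p(G\setminus u,r)+\sum_{i\sim u}p(G\setminus ui,r-1)$, and $\sum_{i\in V(G)}p(G\setminus i,r)=(n-2r)p(G,r)$, with the sign and exponent bookkeeping handled properly. The paper gives no proof of its own here --- it cites Godsil's book --- and your argument is essentially the standard one found in that reference, so there is nothing to add.
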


Note that if $\textnormal {mult} (\theta,   G)=0$ then for any $u\in V(G)$, $u$ is either $\theta$-neutral or $\theta$-positive and no vertices in $G$ can be $\theta$-special. By \cite[Corollary 4.3]{G}, a $\theta$-special vertex is $\theta$-positive. Therefore
\begin {equation}
V(G)=B_{\theta}(G)\cup A_{\theta}(G)\cup P_{\theta}(G)\cup N_{\theta}(G),\notag
\end {equation}
where
\begin {itemize}
\item [] $B_{\theta}(G)$ is the set of all $\theta$-essential vertices in $G$,
\item [] $A_{\theta}(G)$ is the set of all $\theta$-special vertices in $G$,
\item [] $N_{\theta}(G)$ is the set of all $\theta$-neutral vertices in $G$,
\item [] $P_{\theta}(G)$ is the set of all $\theta$-positive vertices which are not $\theta$-special in $G$,
\end {itemize}
is a partition of $V(G)$.
Note that there are no $0$-neutral vertices. So $N_0(G)=\varnothing$ and $V(G)=B_{0}(G)\cup A_{0}(G)\cup P_{0}(G)$.

The Gallai-Edmonds Structure Theorem (henceforth the GEST) contains structural information of the above decomposition of $V(G)$ with respect to the root $\theta=0$ of $\mu (G,x)$. In \cite {KC}, Chen and Ku extended the GEST to any root $\theta$. It essentially consists of two lemmas: the $\theta$-Stability Lemma and the $\theta$-Gallai's Lemma.

\begin {thm}\label {P:T5}\textnormal {\cite [Theorem 1.5]{KC} (The $\theta$-Stability Lemma)}\\ Let $G$ be a graph with $\theta$ a root of $\mu (G,x)$. If $u\in A_{\theta} (G)$ then
\begin {itemize}
\item [(i)] $B_{\theta}(G\setminus u)=B_{\theta}(G)$,
\item [(ii)] $P_{\theta}(G\setminus u)=P_{\theta}(G)$,
\item [(iii)] $N_{\theta}(G\setminus u)=N_{\theta}(G)$,
\item [(iv)] $A_{\theta}(G\setminus u)=A_{\theta}(G)\setminus \{u\}$.
\end {itemize}
\end {thm}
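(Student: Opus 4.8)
The plan is to reduce the four assertions to a single invariance: for every $v\in V(G)$ with $v\neq u$, the $\theta$-type (essential, neutral, or positive) of $v$ is the same in $G$ and in $G\setminus u$. Since $u$ is $\theta$-special it is $\theta$-positive by \cite[Corollary 4.3]{G}, so writing $m=\m(\theta,G)$ we have $\m(\theta,G\setminus u)=m+1$. Granting the invariance, (i), (ii), (iii) are just the invariance read off type by type; and (iv) follows because $u\notin B_{\theta}(G)$, so deleting $u$ destroys no adjacency between a surviving vertex and $B_{\theta}(G)=B_{\theta}(G\setminus u)$, whence a vertex $z\neq u$ is $\theta$-special in $G\setminus u$ if and only if it is $\theta$-special in $G$. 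Thus everything rests on proving, for each $v\neq u$, that $c:=\m(\theta,G\setminus uv)$ equals $b+1$, where $b:=\m(\theta,G\setminus v)$.

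First I would dispose of the case where $v$ is $\theta$-essential in $G$ (so $b=m-1$) by interlacing alone: applying Lemma \ref{interlacing} along $G\setminus u\to G\setminus uv$ gives $c\in\{m,m+1,m+2\}$, while applying it along $G\setminus v\to G\setminus uv$ gives $c\in\{m-2,m-1,m\}$; the two ranges meet only at $c=m=b+1$. The same two inequalities record, for every $v$, the bounds $c\geq m$ and $b\geq m-1$, which I use below.

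For the remaining vertices ($v$ $\theta$-neutral or $\theta$-positive, i.e. $b\in\{m,m+1\}$) the tool is a standard identity of Godsil \cite{G0}: there is a polynomial $\mu_{uv}(x)=\sum_{P}\mu(G\setminus V(P),x)$, summed over the $u$–$v$ paths $P$ of $G$, with
\[
\mu(G\setminus u,x)\,\mu(G\setminus v,x)-\mu(G,x)\,\mu(G\setminus uv,x)=\mu_{uv}(x)^{2}.
\]
Comparing orders at $\theta$ and using $b\geq m-1$, $c\geq m$, both terms on the left vanish to order at least $2m$, so $\mu_{uv}$ vanishes to order at least $m$; that is, $g_{uv}:=\mu_{uv}/\mu(G,\cdot)$ is finite at $\theta$. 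Dividing the identity by $\mu(G,\cdot)^{2}$ turns it into the Schur-complement relation $\mu(G\setminus uv)/\mu(G)=\phi_{u}\phi_{v}-g_{uv}^{2}$, where $\phi_{t}:=\mu(G\setminus t,\cdot)/\mu(G,\cdot)$. Since $u$ is $\theta$-positive, $\phi_{u}$ has a simple zero at $\theta$, so $\phi_{u}\phi_{v}$ vanishes to order $1+(b-m)\geq 1$; a short order count then shows that $c=b+1$ (the desired type preservation) holds precisely when $g_{uv}(\theta)=0$, whereas $g_{uv}(\theta)\neq 0$ forces $c=m$, i.e. $v$ would become $\theta$-essential in $G\setminus u$.

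Thus the whole theorem comes down to the claim that $g_{uv}(\theta)=0$ for every non-$\theta$-essential $v$, and this is exactly where the hypothesis that $u$ is $\theta$-\emph{special} (not merely $\theta$-positive) must enter: for a $\theta$-positive vertex with no $\theta$-essential neighbour the value $g_{uv}(\theta)$ can be nonzero (for instance an end-vertex of a component carrying a perfect matching), and then a new essential vertex really is created. I expect to close the gap through the essential neighbour $w\sim u$, using the path recurrence $\mu_{uv}(x)=\sum_{t\sim u}\mu_{tv}^{G\setminus u}(x)$ together with $\m(\theta,G\setminus w)=m-1$ and the positivity of $u$, so as to force the off-diagonal value to vanish; equivalently this is the multiplicity incarnation of the Gallai–Edmonds fact that a $\theta$-special vertex is matched into the $\theta$-critical part. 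Making this vanishing precise is the main obstacle of the proof.
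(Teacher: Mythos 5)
This theorem is not proved in the paper at all: it is imported verbatim as Theorem 1.5 of \cite{KC}, so there is no internal proof to compare against. Judged on its own terms, your proposal is an honest reduction, not a proof, and you say so yourself: everything is funnelled into the claim that $g_{uv}(\theta)=0$ for every non-$\theta$-essential $v$ (equivalently, that every $u$--$v$ path $P$ satisfies $\m(\theta,G\setminus P)\geq \m(\theta,G)+1$), and the final paragraph openly defers the proof of that claim. That claim is precisely the content of the $\theta$-Stability Lemma --- it is where the hypothesis that $u$ is $\theta$-special, rather than merely $\theta$-positive, must do its work, as your own counterexample (an endpoint of a perfectly matched component) shows --- so leaving it as ``the main obstacle'' leaves the theorem unproved. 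The sketched route via the essential neighbour $w\sim u$ and a path recurrence is plausible in spirit but is exactly the delicate induction that \cite{KC} has to carry out; nothing in your write-up substitutes for it.

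Two further points on the parts you do carry out. First, the identity you invoke is mis-stated: the Heilmann--Lieb/Godsil identity (quoted correctly in the paper's proof of Lemma \ref{nonpositive_path}) reads
\[
\mu(G\setminus u,x)\,\mu(G\setminus v,x)-\mu(G,x)\,\mu(G\setminus uv,x)=\sum_{P\in\mathbb{P}(u,v)}\mu(G\setminus P,x)^{2},
\]
a \emph{sum of squares}, not the square of $\sum_{P}\mu(G\setminus P,x)$. This is repairable --- a sum of squares of real polynomials vanishing to order $2m$ at $\theta$ forces each summand $\mu(G\setminus P,x)$ to vanish to order at least $m$, and your order count then goes through with $g_{uv}^{2}$ replaced by $\sum_{P}\bigl(\mu(G\setminus P,x)/\mu(G,x)\bigr)^{2}$ --- but as written the Schur-complement display is false. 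Second, your interlacing argument for $\theta$-essential $v$ is correct and clean (it only uses that $u$ is $\theta$-positive, consistent with Lemma \ref{godsil_positive}(a)), and the derivation of (iv) from (i)--(iii) is fine. But since (i)--(iii) for neutral and positive $v$ rest entirely on the unproved vanishing claim, the proposal does not establish the theorem.
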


\begin {thm}\label {P:T6}\textnormal {\cite [Theorem 1.7]{KC} (The $\theta$-Gallai's Lemma)} \\ If $G$ is connected and $\theta$-critical then $\textnormal {mult} (\theta,   G)=1$.
\end {thm}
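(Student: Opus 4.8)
The plan is to argue by contradiction using induction on $|V(G)|$. Assume $G$ is connected and $\theta$-critical, so every vertex of $G$ is $\theta$-essential, and suppose $k:=\m(\theta,G)\ge 2$. The target is to produce a single vertex of $G$ that fails to be $\theta$-essential, contradicting the hypothesis. The base cases are immediate: a connected $\theta$-critical graph on one vertex must be $K_1$ with $\theta=0$, where $\m(0,K_1)=1$, and the few small cases are checked directly. For the inductive step I would exploit connectivity by choosing $u\in V(G)$ that is \emph{not} a cut vertex (such a vertex always exists, e.g.\ an endpoint of a longest path), so that $H:=G\setminus u$ is connected with $\m(\theta,H)=k-1\ge 1$.

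The main engine would be the vertex recurrence of Theorem \ref{basic_property}(c),
\begin{equation}
\mu(G,x)=x\,\mu(G\setminus u,x)-\sum_{i\sim u}\mu(G\setminus ui,x),\notag
\end{equation}
read at $x=\theta$ together with the interlacing bounds of Lemma \ref{interlacing}. Since $u$ is $\theta$-essential, $\m(\theta,G\setminus u)=k-1$, and deleting a second vertex $v$ gives $\m(\theta,G\setminus uv)\in\{k-2,k-1,k\}$, with $v$ being $\theta$-essential in $H$ precisely when this value equals $k-2$. A second, sign-controlling ingredient is the standard fact that $\mu(G\setminus u,x)/\mu(G,x)$ is a Stieltjes-type function whose residue at each pole is nonnegative and is strictly positive at $\theta$ exactly when $u$ is $\theta$-essential; comparing residues in $\mu'(G,x)=\sum_u\mu(G\setminus u,x)$ (Theorem \ref{basic_property}(d)) yields $\sum_u r_u=\m(\theta,G)$ with every $r_u>0$. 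The hope is to combine these local interlacing constraints with the recurrence so that, when $k\ge 2$, either $H$ is itself $\theta$-critical — in which case the induction hypothesis forces $\m(\theta,H)=1$, hence $k=2$, a case to be eliminated separately — or $H$ contains a $\theta$-neutral or $\theta$-positive vertex whose status can be traced back through the recurrence.

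The main obstacle, and the genuinely delicate part, is exactly this last conversion of global information (the residue identity, the multiplicity of $H$) into the local conclusion that some vertex of $G$ is non-$\theta$-essential: the difficulty is that $H=G\setminus u$ need \emph{not} be $\theta$-critical, since a vertex essential in $G$ may become neutral or positive after deleting $u$, and the interlacing bounds alone permit this. This step is the analogue, for arbitrary $\theta$, of the alternating-path argument underlying the classical ($\theta=0$) proof of Gallai's Lemma, but here it must be executed purely through matching-polynomial identities and interlacing rather than through maximum matchings; isolating the correct non-cut vertex $u$ so that the components of $G\setminus u$ inherit $\theta$-criticality and the induction applies componentwise is where I expect the real work to lie, and the $\theta$-Stability Lemma (Theorem \ref{P:T5}) should help in passing between $G$ and its vertex-deleted subgraphs. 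Finally I would note that the factor $x$ in the recurrence is relevant only when $\theta=0$, so the argument splits naturally into the classical case and the generic case $\theta\ne 0$.
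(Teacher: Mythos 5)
This statement is not proved in the paper at all: it is quoted verbatim from reference \cite{KC} (Theorem 1.7 there), so there is no in-paper argument to compare yours against. Judged on its own terms, your proposal is an outline with an explicitly acknowledged hole at exactly the point where the theorem lives. Everything you assemble before that point --- the recurrence of Theorem \ref{basic_property}(c), interlacing, the residue identity $\sum_u r_u=\m(\theta,G)$ with $r_u>0$ precisely for $\theta$-essential $u$ --- is individually correct but collectively consistent with $k\ge 2$: if every vertex is $\theta$-essential, the residue identity just says that $|V(G)|$ positive numbers sum to $k$, which is no contradiction. The step you defer (``the hope is to combine these local interlacing constraints with the recurrence so that \dots either $H$ is $\theta$-critical \dots or $H$ contains a $\theta$-neutral or $\theta$-positive vertex whose status can be traced back'') is the entire content of the lemma, and nothing in your sketch indicates how the trace-back would go. The obstruction you yourself name is real: a vertex $\theta$-essential in $G$ can become $\theta$-neutral or $\theta$-positive in $G\setminus u$, so the induction hypothesis cannot be applied to $H=G\setminus u$, and choosing $u$ to be a non-cut vertex does nothing to prevent this (connectivity of $H$ is not the issue; $\theta$-criticality of $H$ is).

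Two further points. First, even the branch you do describe is incomplete: if $H$ is $\theta$-critical you get $k=2$, ``a case to be eliminated separately,'' but no mechanism for eliminating it is offered, and for $\theta\ne 0$ there is no parity argument available as there is in the classical $\theta=0$ case. Second, the actual proof in \cite{KC} does not proceed by deleting a non-cut vertex and inducting on $|V(G)|$ in this way; it is built on a careful analysis of $\theta$-essential paths (cf.\ Lemmas \ref{path_interlacing} and \ref{essential-path} quoted in this paper) and on the machinery surrounding the $\theta$-Stability Lemma, and the argument is substantially longer than what your ingredients would suggest. In short: the approach is not obviously wrong, but as written it assumes the conclusion at the critical step, so it does not constitute a proof.
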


By Theorem \ref {P:T5} and Theorem \ref {P:T6}, it is straightforward to deduce the following whose proof is omitted.

\begin {cor}\label {P:C7}\
\begin {itemize}
\item [(i)]  $A_{\theta}(G\setminus A_{\theta}(G))=\varnothing$, $B_{\theta}(G\setminus A_{\theta}(G))=B_{\theta}(G)$, $P_{\theta}(G\setminus A_{\theta}(G))=P_{\theta}(G)$, and $N_{\theta}(G\setminus A_{\theta}(G))=N_{\theta}(G)$.
\item [(ii)] $G\setminus A_{\theta}(G)$ has exactly $\vert A_{\theta}(G)\vert+\textnormal {mult} (\theta, G)$ $\theta$-critical components.
\item [(iii)] If $H$ is a component of $G\setminus A_{\theta}(G)$ then either $H$ is $\theta$-critical or $\textnormal {mult} (\theta, H)=0$.
\item [(iv)] The subgraph induced by $B_{\theta}(G)$ consists of all the  $\theta$-critical components in $G\setminus A_{\theta}(G)$.
\end {itemize}
\end {cor}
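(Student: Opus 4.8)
The plan is to prove the four parts in the order (i), (iii), (ii), (iv), since (iii) and (ii) rest on (i) and (iv) is a consequence of (i) together with (iii). First note that if $\m(\theta,G)=0$ then $G$ has no $\theta$-special vertices, so $A_{\theta}(G)=\varnothing$ and every assertion is vacuous; hence we may assume $\theta$ is a root of $\mu(G,x)$. For part (i) I would induct on $k=|A_{\theta}(G)|$, deleting the $\theta$-special vertices one at a time (legitimate since the deletion order is immaterial). Picking $u\in A_{\theta}(G)$, the $\theta$-Stability Lemma (Theorem \ref{P:T5}) gives $B_{\theta}(G\setminus u)=B_{\theta}(G)$, $P_{\theta}(G\setminus u)=P_{\theta}(G)$, $N_{\theta}(G\setminus u)=N_{\theta}(G)$ and $A_{\theta}(G\setminus u)=A_{\theta}(G)\setminus\{u\}$. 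Since a $\theta$-special vertex is $\theta$-positive, $\m(\theta,G\setminus u)=\m(\theta,G)+1>0$, so $\theta$ stays a root and the lemma applies again to the smaller set $A_{\theta}(G\setminus u)$. Iterating removes all of $A_{\theta}(G)$ while leaving $B_{\theta}$, $P_{\theta}$, $N_{\theta}$ fixed, which is exactly (i).

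Part (iii) is the crux, and I expect it to be the main obstacle. Let $H$ be a component of $G\setminus A_{\theta}(G)$. By multiplicativity of the matching polynomial over disjoint unions (Theorem \ref{basic_property}(a)), the classification of a vertex of $H$ is the same whether computed inside $H$ or inside $G\setminus A_{\theta}(G)$; in particular, by (i) the graph $H$ has no $\theta$-special vertex. If $\m(\theta,H)=0$ we are done, so suppose $\m(\theta,H)=m\ge 1$. Applying the derivative formula (Theorem \ref{basic_property}(d)), $\frac{d}{dx}\mu(H,x)=\sum_{i\in V(H)}\mu(H\setminus i,x)$: the left side has $\theta$ as a root of multiplicity exactly $m-1$, whereas if $H$ had no $\theta$-essential vertex every summand on the right would be divisible by $(x-\theta)^m$, forcing multiplicity at least $m$, a contradiction. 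Hence $H$ has a $\theta$-essential vertex. Finally, because $H$ is connected and has no $\theta$-special vertex, no non-essential vertex of $H$ can be adjacent to an essential one (else it would be $\theta$-special); connectivity then propagates essentiality across every edge, so all vertices of $H$ are $\theta$-essential, i.e. $H$ is $\theta$-critical. The two places where the hypotheses genuinely bite are this propagation argument and the derivative-based existence of an essential vertex; everything else is bookkeeping on top of (i).

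For part (ii) the key observation is that deleting a single $u\in A_{\theta}(G)$ leaves the quantity $|A_{\theta}(\cdot)|+\m(\theta,\cdot)$ unchanged, since $|A_{\theta}(\cdot)|$ drops by $1$ (Stability Lemma) while $\m(\theta,\cdot)$ rises by $1$ ($u$ being $\theta$-positive). Iterating down to $G\setminus A_{\theta}(G)$, where $A_{\theta}=\varnothing$ by (i), yields $|A_{\theta}(G)|+\m(\theta,G)=\m(\theta,G\setminus A_{\theta}(G))$. By multiplicativity this last quantity equals $\sum_{H}\m(\theta,H)$ over the components $H$ of $G\setminus A_{\theta}(G)$, and by (iii) together with the $\theta$-Gallai Lemma (Theorem \ref{P:T6}) each term is $1$ when $H$ is $\theta$-critical and $0$ otherwise; so the sum counts precisely the $\theta$-critical components, giving (ii).

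Part (iv) then follows by combining (i) and (iii). By (i) the $\theta$-essential vertices of $G\setminus A_{\theta}(G)$ are exactly $B_{\theta}(G)$, and since $B_{\theta}(G)$ and $A_{\theta}(G)$ are disjoint the subgraph induced by $B_{\theta}(G)$ is the same object in $G$ and in $G\setminus A_{\theta}(G)$. Any $\theta$-critical component of $G\setminus A_{\theta}(G)$ lies entirely in $B_{\theta}(G)$ and, being a full component, is a component of the induced subgraph. Conversely, a component of the subgraph induced by $B_{\theta}(G)$ contains a $\theta$-essential vertex, so its ambient component $K$ in $G\setminus A_{\theta}(G)$ satisfies $\m(\theta,K)\ge 1$ and is therefore $\theta$-critical by (iii); hence $K\subseteq B_{\theta}(G)$ and $K$ coincides with the given component. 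This bijection is exactly the assertion of (iv).
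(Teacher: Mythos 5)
Your proof is correct and is precisely the ``straightforward deduction from Theorems \ref{P:T5} and \ref{P:T6}'' that the paper omits: iterated application of the $\theta$-Stability Lemma yields (i) and the invariance of $|A_{\theta}(\cdot)|+\m(\theta,\cdot)$ for (ii), while the no-special-vertex propagation argument combined with the $\theta$-Gallai Lemma gives (iii) and (iv). The only ingredient beyond those two theorems is the existence of a $\theta$-essential vertex in a component with $\m(\theta,H)>0$, which you correctly re-derive from the derivative identity (this is exactly Lemma \ref{existence_essential}, quoted later in the paper from Godsil).
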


\section{The Structure of Maximal $\theta$-Tutte Sets}

In this section, we study the structure of maximal $\theta$-Tutte sets. We first establish a characterization of these sets in their relation to $\theta$-extreme sets.

Let $X \subseteq V(G)$. By interlacing (Lemma \ref{interlacing}), it is immediate that $\m(\theta, G \setminus X) \le \m(\theta, G) + |X|$. On the other hand, by the $\theta$-Gallai's Lemma (Theorem \ref{P:T6}), we have $c_{\theta}(G \setminus X) \le \m(\theta, G \setminus X)$. Therefore, if $X$ is a $\theta$-Tutte set, then it is also $\theta$-extreme. The converse is not true. Nevertheless, a maximal $\theta$-extreme set is always a maximal $\theta$-Tutte set.

\begin {thm}\label{max-Tutte}
Let $G$ be a graph and $\theta$ be a real number. A set $X$ is a maximal $\theta$-Tutte set in $G$ if and only if $X$ is a maximal $\theta$-extreme set in $G$.
\end {thm}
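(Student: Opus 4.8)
The plan is to reduce the whole equivalence to a single lemma: \emph{every maximal $\theta$-extreme set is itself a $\theta$-Tutte set}. Granting this, the rest is bookkeeping built on the observation, already recorded before the statement, that every $\theta$-Tutte set is $\theta$-extreme; thus the two families of sets are nested. If $X$ is maximal among $\theta$-extreme sets, the lemma makes $X$ a $\theta$-Tutte set, and any $\theta$-Tutte set containing $X$ is in particular $\theta$-extreme and hence equals $X$ by maximality, so $X$ is a maximal $\theta$-Tutte set. Conversely, if $X$ is a maximal $\theta$-Tutte set then $X$ is $\theta$-extreme; enlarging it to some maximal $\theta$-extreme set $X'$ and applying the lemma shows $X'$ is $\theta$-Tutte, whence $X=X'$ by the maximality of $X$ as a $\theta$-Tutte set, so $X$ is a maximal $\theta$-extreme set.

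To prove the lemma, let $X$ be a maximal $\theta$-extreme set and write $H=G\setminus X$, so that extremeness reads $\m(\theta,H)=\m(\theta,G)+|X|$. It therefore suffices to upgrade the inequality $c_{\theta}(H)\le\m(\theta,H)$ (a consequence of the $\theta$-Gallai Lemma, Theorem \ref{P:T6}) to an equality, since then $c_{\theta}(G\setminus X)=\m(\theta,G)+|X|$ is exactly the $\theta$-Tutte condition. I would expand $\m(\theta,H)$ over the components of $H$ via multiplicativity of the matching polynomial (Theorem \ref{basic_property}(a)): each $\theta$-critical component contributes exactly $1$ by Theorem \ref{P:T6}, so the equality $c_{\theta}(H)=\m(\theta,H)$ is equivalent to the assertion that every component of $H$ is either $\theta$-critical or has $\m(\theta,\cdot)=0$.

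The heart of the argument, and the step I expect to be the main obstacle, is showing $A_{\theta}(H)=\varnothing$. Suppose instead that some $u\in A_{\theta}(H)$, i.e. $u$ is $\theta$-special in $H$. The cited fact that a $\theta$-special vertex is $\theta$-positive gives $\m(\theta,H\setminus u)=\m(\theta,H)+1$; translating back to $G$ this reads $\m(\theta,G\setminus(X\cup\{u\}))=\m(\theta,G)+|X\cup\{u\}|$, so $X\cup\{u\}$ is a strictly larger $\theta$-extreme set, contradicting the maximality of $X$. Hence $A_{\theta}(H)=\varnothing$, and then $H\setminus A_{\theta}(H)=H$, so Corollary \ref{P:C7}(iii) applies verbatim to the components of $H$ and yields precisely that each such component is $\theta$-critical or has $\m(\theta,\cdot)=0$. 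This gives $c_{\theta}(H)=\m(\theta,H)$ and completes the lemma. The only point demanding care is that the Gallai--Edmonds machinery (the $\theta$-Stability Lemma and Corollary \ref{P:C7}) is being invoked for the auxiliary graph $H$ rather than for $G$ itself; since those results hold for an arbitrary graph at a fixed root $\theta$, this substitution is legitimate.
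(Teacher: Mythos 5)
Your proposal is correct and follows essentially the same route as the paper: both reduce the theorem to showing that a maximal $\theta$-extreme set $X$ is a $\theta$-Tutte set, use the maximality of $X$ to forbid $\theta$-positive (in your case, $\theta$-special) vertices in $G\setminus X$, and then invoke the Gallai--Edmonds machinery together with the $\theta$-Gallai Lemma to identify $c_{\theta}(G\setminus X)$ with $\m(\theta,G\setminus X)$. The only cosmetic difference is that the paper rules out all $\theta$-positive vertices of $G\setminus X$ directly, whereas you rule out only the $\theta$-special ones and let Corollary \ref{P:C7}(iii) do the rest; both suffice.
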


\begin{proof}
It remains to show that if $X$ is a maximal $\theta$-extreme set in $G$, then $c_{\theta}(G \setminus X) = \m(\theta, G) + |X|$. Notice that $G \setminus X$ has no $\theta$-positive vertices; otherwise, any $\theta$-positive vertex of $G \setminus X$ together with $X$ form a larger $\theta$-extreme set containing $X$, violating the maximality of $X$. In particular, $D_{\theta}(G \setminus X) \cup N_{\theta}(G \setminus X) = V(G) \setminus X$. This means that if $H_{1}, \ldots, H_{s}$ are the components of $G-X$ with $\theta$ as a root, then $V(H_{1}) \cup \cdots \cup V(H_{s}) = D_{\theta}(G \setminus X)$. By GEST for $\theta$, each $H_{j}$ is $\theta$-critical and satisfies $\m(\theta, H_{j})=1$. Since $X$ is $\theta$-extreme, we obtain
\[ \m(\theta, G)+|X| = \m(\theta, G \setminus X) = \sum_{j=1}^{s} \m(\theta, H_{j}) = s = c_{\theta}(G \setminus X), \]
and thus $X$ is a $\theta$-Tutte set in $G$.

If $X$ is not a maximal $\theta$-Tutte set in $G$, then $X$ is properly contained in a $\theta$-Tutte set $Y$. But $Y$ would be a $\theta$-extreme set which properly contains $X$, violating the maximality of $X$. Hence, $X$ is a maximal $\theta$-Tutte set.
\end{proof}

It is worth noting that a $0$-Tutte set is always a Tutte set but the converse is not true (\cite[Proposition 2.3]{KW}). However, a maximal Tutte set  is always a maximal $0$-Tutte set (\cite[Proposition 2.4]{KW}).

We proceed to prove another characterization of maximal $\theta$-Tutte sets.

Recall that $D_{\theta}(G)$ is completely determined by knowing the multiplicities of $\theta$ when deleting any two distinct vertices of $G$. Moreover, by interlacing, these multiplicities lie between $\m(\theta, G)-2$ and $\m(\theta, G)+2$. This motivates the following terminology:

\begin {dfn}\label {BP:D3}  Let $G$ be a graph. We define the graph $D_{r,\theta}(G)$ for $r=-2,-1,0,1,2$ as follows:
\begin {itemize}
\item [(a)] $V(D_{r,\theta}(G))=V(G)$, and
\item [(b)] $e=(u,v)\in E(D_{r,\theta}(G))$ if and only if $\textnormal {mult} (\theta, G\setminus uv)=\textnormal {mult} (\theta, G)+r$.
\end {itemize}
\end {dfn}

Note that $D_{\theta}(G) = D_{-2, \theta}(G) \cup D_{-1, \theta}(G) \cup D_{0, \theta}(G)$. Note also that the powers of $x$'s in the matching polynomial $\mu (G,x)$ are either all even or all odd. This implies that $\theta$ is a root of $\mu (G,x)$ if and only if $-\theta$ is. Also the powers of $x$'s in the $n$-th derivative of $\mu (G,x)$ are either all even or all odd. From these we deduce that $\textnormal {mult} (\theta,G)=\textnormal {mult} (-\theta,G)$. Hence we have

\begin {thm}\label {BP:T4a} $D_{r,\theta}(G)=D_{r,-\theta}(G)$.
\end {thm}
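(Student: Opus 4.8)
The plan is to show the two graphs coincide edge by edge. Since $V(D_{r,\theta}(G)) = V(G) = V(D_{r,-\theta}(G))$ by definition, only the edge sets require attention. By Definition \ref{BP:D3}, membership of a pair $(u,v)$ in either graph is governed entirely by the quantities $\m(\theta, G)$ and $\m(\theta, G\setminus uv)$ (respectively their $-\theta$ counterparts), so the whole statement reduces to a symmetry of these multiplicities under $\theta \mapsto -\theta$.

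The engine of the argument is the identity $\m(\theta, H) = \m(-\theta, H)$, valid for every graph $H$, which is precisely the fact recorded in the paragraph preceding the theorem. For completeness I would verify it from the parity structure of the matching polynomial: writing $n = |V(H)|$, every exponent $n-2r$ appearing in $\mu(H,x) = \sum_r (-1)^r p(H,r) x^{n-2r}$ has the same parity as $n$, so that $\mu(H,-x) = (-1)^n \mu(H,x)$. This functional equation forces the roots to occur in pairs $\{\theta, -\theta\}$ of equal multiplicity: substituting $x \mapsto -x$ shows that $\m(\theta, H)$ equals the multiplicity of $-\theta$ as a root of $\mu(H,-x)$, while the factor $(-1)^n$, being a nonzero constant, leaves every multiplicity of $\mu(H,-x)$ equal to the corresponding multiplicity of $\mu(H,x)$. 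Combining the two gives $\m(\theta, H) = \m(-\theta, H)$.

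With this identity in hand, I would apply it twice, once with $H = G$ and once with $H = G\setminus uv$, to obtain, for any distinct $u, v \in V(G)$,
\[
\m(\theta, G\setminus uv) = \m(\theta, G) + r \iff \m(-\theta, G\setminus uv) = \m(-\theta, G) + r.
\]
The left-hand condition says $(u,v) \in E(D_{r,\theta}(G))$ and the right-hand one says $(u,v) \in E(D_{r,-\theta}(G))$, so the two graphs have identical edge sets, and therefore $D_{r,\theta}(G) = D_{r,-\theta}(G)$.

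I do not anticipate a genuine obstacle here: once the symmetry $\m(\theta, H) = \m(-\theta, H)$ is granted, the theorem is a direct translation of the edge condition. The only point demanding a little care is ensuring that the functional equation $\mu(H,-x) = (-1)^n \mu(H,x)$ preserves \emph{multiplicities} and not merely the set of roots; this is handled cleanly by the substitution argument above, in which the constant factor $(-1)^n$ plays no role in the order of vanishing at any point.
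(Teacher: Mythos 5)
Your proof is correct and follows essentially the same route as the paper: both arguments rest on the parity of the exponents in $\mu(H,x)$ to conclude $\m(\theta,H)=\m(-\theta,H)$, and then apply this to $H=G$ and $H=G\setminus uv$ to match the edge conditions. The only cosmetic difference is that you justify the equality of multiplicities via the functional equation $\mu(H,-x)=(-1)^n\mu(H,x)$, whereas the paper invokes the same parity property of the derivatives of $\mu(H,x)$; these are interchangeable verifications of the same fact.
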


In view of Theorem \ref{Main01}, we further introduce the following definition:

\begin {dfn}\label{nice_set_independent} A set $X \subseteq V(G)$ with $|X|>1$ is said to be $\theta$-{\em nice} in $G$ if $\m(\theta, G \setminus uv)=\m(\theta, G)+2$ for any $u, v \in X$, $u \not = v$. Clearly, if $X$ is $\theta$-nice then all its vertices are $\theta$-positive. Equivalently, a set $X$ is $\theta$-nice in $G$ if the subgraph $D_{2,\theta}(G)[X]$ of $D_{2, \theta}(G)$ induced by $X$ is complete.
\end {dfn}

It has been shown that $X$ is an $0$-extreme set if and only if $X$ is an independent set of $D_{0}(G)$, provided that $\vert X\vert>1$ (Theorem \ref{Tutte}). Recall that $N_{0}(G)=\varnothing$ and so $\textnormal {mult} (0,G\setminus uv)=-2,0,2$ for all $u,v\in V(G)$ (by interlacing (Lemma \ref{interlacing})). This implies that $X$ is an independent set of $D_{0}(G)$ if and only if $D_{2,0}(G)[X]$ is a complete graph. Hence, Theorem \ref{Tutte} can be reformulated as follows:

\begin {thm}\label{extreme_zero}
Let $G$ be a graph and $X \subseteq V(G)$, $|X|>1$. The following are equivalent:
\begin {itemize}
\item[(a)] $X$ is a maximal $0$-Tutte set in $G$,
\item[(b)] $X$ is a maximal extreme set in $G$,
\item[(c)] $X$ is a maximal complete subgraph in $D_{2,0}(G)$, that is $X$ is a maximal $0$-nice set in $G$.
\end {itemize}
\end {thm}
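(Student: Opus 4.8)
The plan is to deduce Theorem \ref{extreme_zero} as a reformulation of Theorem \ref{Tutte}, being careful to route every equivalence through the common condition (b), namely ``maximal extreme set''. This detour is necessary because, as noted after Theorem \ref{max-Tutte}, a Tutte set need not be a $0$-Tutte set, so condition (a) here is genuinely different from condition (a) of Theorem \ref{Tutte}; only the extreme-set condition is literally shared by both statements.

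For the equivalence (a) $\Leftrightarrow$ (b), I would specialize Theorem \ref{max-Tutte} to $\theta=0$: it asserts that $X$ is a maximal $0$-Tutte set if and only if $X$ is a maximal $0$-extreme set. Since the definitions of $0$-extreme set and extreme set coincide, a maximal $0$-extreme set is precisely a maximal extreme set, and (a) $\Leftrightarrow$ (b) follows at once.

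For (b) $\Leftrightarrow$ (c), the key observation is that $D_{2,0}(G)$ is the complement of $D_0(G)$ on the vertex set $V(G)$. Indeed, since $N_0(G)=\varnothing$, two applications of interlacing (Lemma \ref{interlacing}) together with the parity of $\m(0,\cdot)$ (the exponents of $\mu$ all share the parity of the number of vertices, so $\m(0,H)\equiv \vert V(H)\vert \pmod 2$) force $\m(0, G \setminus uv) \in \{\m(0,G)-2,\, \m(0,G),\, \m(0,G)+2\}$ for every pair of distinct $u,v$. Hence each pair $\{u,v\}$ is an edge of exactly one of $D_0(G)$ (when $\m(0, G\setminus uv) \le \m(0,G)$) and $D_{2,0}(G)$ (when $\m(0, G\setminus uv) = \m(0,G)+2$). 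Consequently a set $X$ is independent in $D_0(G)$ if and only if $D_{2,0}(G)[X]$ is complete, and this dictionary preserves inclusion, so maximal independent sets of $D_0(G)$ are exactly the maximal complete subgraphs (that is, maximal $0$-nice sets) of $D_{2,0}(G)$. Combining this with the equivalence (b) $\Leftrightarrow$ (c) of Theorem \ref{Tutte}, and recalling $D(G)=D_0(G)$, yields (b) $\Leftrightarrow$ (c).

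I do not expect a substantive obstacle, as the statement merely repackages Theorem \ref{Tutte}; the only delicate points are the bookkeeping above, namely keeping the Tutte versus $0$-Tutte distinction straight and verifying the complementarity of $D_0(G)$ and $D_{2,0}(G)$ through the parity argument.
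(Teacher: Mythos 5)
Your proof is correct and follows essentially the same route as the paper: the paper likewise obtains this theorem as a direct reformulation of Theorem \ref{Tutte}, using the observation that $N_0(G)=\varnothing$ forces $\m(0,G\setminus uv)\in\{\m(0,G)-2,\ \m(0,G),\ \m(0,G)+2\}$, so that independent sets of $D_0(G)$ are exactly the cliques of $D_{2,0}(G)$. Your only (harmless) deviation is to route (a)$\Leftrightarrow$(b) through Theorem \ref{max-Tutte} specialized to $\theta=0$, whereas the paper implicitly leans on the cited fact that maximal Tutte sets and maximal $0$-Tutte sets coincide; both are available and non-circular at this point in the text.
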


So it is quite natural to ask whether Theorem \ref{extreme_zero} holds for $\theta\neq 0$. Indeed, we shall prove that

\begin {thm}\label{theta-max-Tutte}
Let $G$ be a graph, $X \subseteq V(G)$, $|X|>1$ and $\theta$ be a real number. The followings are equivalent:
\begin {itemize}
\item[(a)] $X$ is a maximal $\theta$-Tutte set in $G$,
\item[(b)] $X$ is a maximal $\theta$-extreme set in $G$,
\item[(c)] $X$ is a maximal complete subgraph in $D_{2,\theta}(G)$, that is $X$ is a maximal $\theta$-nice set in $G$.
\end {itemize}
\end {thm}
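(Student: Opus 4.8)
The plan is to handle the three conditions by invoking what is already established and then isolating the one genuinely new ingredient. The equivalence of (a) and (b) is exactly Theorem \ref{max-Tutte}, so everything reduces to relating $\theta$-extreme sets to $\theta$-nice sets. Concretely, I would prove the set-level statement that for every $X\subseteq V(G)$ with $|X|>1$, the set $X$ is $\theta$-extreme if and only if $X$ is $\theta$-nice (i.e. $D_{2,\theta}(G)[X]$ is complete). Once this is in hand the maximality assertion in (c) follows formally: since the families of $\theta$-extreme sets and of $\theta$-nice sets of size greater than $1$ coincide, and any set properly containing a set of size $>1$ again has size $>1$, a maximal $\theta$-extreme set is the same object as a maximal complete subgraph of $D_{2,\theta}(G)$.

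The forward implication (extreme $\Rightarrow$ nice) is the easy half. Assuming $\m(\theta,G\setminus X)=\m(\theta,G)+|X|$, I would first show that every subset $Y\subseteq X$ is again $\theta$-extreme: deleting the vertices of $X$ one at a time, Lemma \ref{interlacing} permits each deletion to raise the multiplicity by at most $1$, so the total gain of $|X|$ forces each deletion to gain exactly $1$; ordering the deletions so that $Y$ comes before $X\setminus Y$ then yields $\m(\theta,G\setminus Y)=\m(\theta,G)+|Y|$. Applying this to the two-element subsets $Y=\{u,v\}$ gives $\m(\theta,G\setminus uv)=\m(\theta,G)+2$ for all distinct $u,v\in X$, which is precisely $\theta$-niceness.

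The reverse implication (nice $\Rightarrow$ extreme) I would prove by strong induction on $|X|$, uniformly over all graphs, with the base case $|X|=2$ being the hypothesis. For $|X|=k\ge 4$, pick any $u\in X$; since $u$ is $\theta$-positive we have $\m(\theta,G\setminus u)=\m(\theta,G)+1$, and I would apply the induction hypothesis to $X\setminus\{u\}$ inside $G\setminus u$. This needs that $X\setminus\{u\}$ is $\theta$-nice in $G\setminus u$, i.e. that $\m(\theta,(G\setminus u)\setminus vw)=\m(\theta,G\setminus u)+2$ for all $v,w\in X\setminus\{u\}$; since $(G\setminus u)\setminus vw=G\setminus uvw$, this is equivalent to $\m(\theta,G\setminus uvw)=\m(\theta,G)+3$ for the nice triple $\{u,v,w\}$. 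Granting this, the induction hypothesis gives $\m(\theta,G\setminus X)=\m(\theta,G\setminus u)+(|X|-1)=\m(\theta,G)+|X|$. Thus the entire induction rests on the case $|X|=3$, namely that three pairwise compatible $\theta$-positive vertices $u,v,w$ satisfy $\m(\theta,G\setminus uvw)=\m(\theta,G)+3$, and this is the main obstacle.

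To attack the $|X|=3$ case I would first use the Gallai--Edmonds machinery to dispose of the $\theta$-special vertices. Via the $\theta$-Stability Lemma (Theorem \ref{P:T5}), deleting a $\theta$-special vertex $u\in A_{\theta}(G)$ preserves $P_{\theta},N_{\theta},B_{\theta}$ and merely removes $u$ from $A_{\theta}$; hence any vertex that is $\theta$-positive in $G$ stays $\theta$-positive in $G\setminus u$, which shows that a $\theta$-special vertex is automatically compatible with every $\theta$-positive vertex. This reduces matters to the case $u,v,w\in P_{\theta}(G)$, positive but non-special. For this residual case I would analyse the Gallai--Edmonds decomposition of $G\setminus u$ through Corollary \ref{P:C7}, together with the matching-polynomial recurrences of Theorem \ref{basic_property}, tracking how the $\theta$-critical components and $\m(\theta,\cdot)$ evolve under the successive deletions; the delicate point is to rule out that $w$ becomes $\theta$-neutral or $\theta$-essential in $G\setminus uv$. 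An alternative and likely cleaner route to the same compatibility-preservation statement is Godsil's Jacobi-type identity for matching polynomials, which expresses $\mu(G\setminus u,x)\,\mu(G\setminus v,x)-\mu(G,x)\,\mu(G\setminus uv,x)$ as the square of a path-generating polynomial; comparing orders of vanishing at $x=\theta$ on both sides turns the pairwise multiplicity hypotheses into the desired conclusion, and I expect this bookkeeping to be the most efficient way past the obstacle.
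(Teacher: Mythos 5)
Your overall architecture is the same as the paper's: dispose of (a)$\Leftrightarrow$(b) by Theorem \ref{max-Tutte}, reduce (b)$\Leftrightarrow$(c) to the set-level equivalence ``$\theta$-extreme $\Leftrightarrow$ $\theta$-nice'' for $|X|>1$ (the paper's Proposition \ref{one_side} and Theorem \ref{general_nice_case}), prove the easy direction by one-at-a-time deletion and interlacing, and run an induction that bottoms out at $|X|=3$. All of that is sound, and your formal derivation of the maximality statement from the set-level equivalence is fine. The problem is that you have not proved the $|X|=3$ case, and that case is the entire mathematical content of the theorem. You correctly identify it as ``the main obstacle'' but then only name two candidate strategies without executing either. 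Your first strategy (reduce to $u,v,w\in P_{\theta}(G)$ by peeling off $\theta$-special vertices) is not a reduction as stated: if $u\in A_{\theta}(G)$, Theorem \ref{P:T5} tells you $v,w$ stay $\theta$-positive in $G\setminus u$, but to recurse you need $\{v,w\}$ to be $\theta$-nice \emph{in} $G\setminus u$, i.e.\ $\m(\theta,G\setminus uvw)=\m(\theta,G)+3$ --- which is exactly the conclusion you are trying to prove. Your second strategy (the Heilmann--Lieb identity) also does not close the case by ``bookkeeping'': applied to $v,w$ in $G\setminus u$ it only tells you that if $\m(\theta,G\setminus uvw)=\m(\theta,G)+1$ then some $v$--$w$ path $P$ in $G\setminus u$ has $\m(\theta,(G\setminus u)\setminus P)\le\m(\theta,G\setminus u)$, and the pairwise niceness of $\{v,w\}$ in $G$ only bounds $\m(\theta,G\setminus P)\ge\m(\theta,G)+1$; interlacing between these two quantities does not produce a contradiction.

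For comparison, the paper's Lemma \ref{base_nice_case} is genuinely delicate. Writing $X=\{x_1,x_2,x_3\}$, it supposes $x_1$ is $\theta$-essential in $G\setminus x_2x_3$, invokes Lemma \ref{neutral_neighbor} to produce a neighbor $z$ of $x_1$ that is $\theta$-neutral in $G\setminus x_2x_3x_1$, deletes the \emph{edge} $e=(x_1,z)$ and computes $\m(\theta,(G\setminus x_2x_3)-e)$ via Theorem \ref{basic_property}(b), and then splits on the status of $z$ in $G\setminus x_1$, using Corollary \ref{delete-essential-positive} and Lemma \ref{all_positive} to reach a parity-type contradiction in each branch. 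None of these ingredients (edge deletion, the neutral neighbor, Lemma \ref{all_positive}) appears in your sketch. Note also that Lemma \ref{neutral_neighbor} requires $\theta\ne 0$, so the case $\theta=0$ must be quoted separately from Theorem \ref{extreme_zero}; your proposal does not make this split. In short: the skeleton is right and matches the paper, but the load-bearing lemma is missing, so the proof is incomplete.
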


In view of Theorem \ref{max-Tutte}, it suffices to show that (b) and (c) of Theorem \ref{theta-max-Tutte} are equivalent. Using the fact that $X$ is an $\theta$-extreme set, it is not hard to prove the following proposition.

\begin {prop}\label{one_side} Let $G$ be a graph and $X\subseteq V(G)$ with $\vert X\vert>1$. If $X$ is an $\theta$-extreme set then $X$ is $\theta$-nice.
\end {prop}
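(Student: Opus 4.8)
The plan is to combine interlacing (Lemma~\ref{interlacing}) with the order-independence of vertex deletion recorded in Definition~\ref{Intro}. Write $X=\{u_1,\dots,u_k\}$ where $k=|X|>1$, and consider the chain of graphs obtained by deleting the vertices one at a time,
\[ G,\ G\setminus u_1,\ G\setminus u_1u_2,\ \dots,\ G\setminus u_1\cdots u_k=G\setminus X. \]
Interlacing applied to each single-vertex deletion gives $\m(\theta,G\setminus u_1\cdots u_j)\le \m(\theta,G\setminus u_1\cdots u_{j-1})+1$ for every $j$, so telescoping yields $\m(\theta,G\setminus X)\le \m(\theta,G)+k$. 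The $\theta$-extreme hypothesis forces equality; since the total increment equals $k$ and there are exactly $k$ one-step increments, each of which is at most $1$, every increment must equal exactly $1$.

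The key point is that this conclusion does not depend on the ordering of the vertices of $X$: for any permutation $u_{i_1},\dots,u_{i_k}$ the identical telescoping argument shows that each successive deletion raises $\m(\theta,\cdot)$ by exactly $1$. To establish the $\theta$-nice condition for a prescribed pair $u,v\in X$ with $u\neq v$, I would therefore choose an ordering beginning with $u$ and then $v$. Reading off the first two increments gives $\m(\theta,G\setminus u)=\m(\theta,G)+1$ and then $\m(\theta,G\setminus uv)=\m(\theta,G\setminus u)+1=\m(\theta,G)+2$, which is precisely what $\theta$-niceness requires.

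I do not expect a genuine obstacle here; only the upper half of the interlacing inequality is used, and the equality case of the telescoping sum does all the work. The only points meriting a line of justification are that Lemma~\ref{interlacing} may be invoked at each stage (it holds for an arbitrary graph and vertex, so it applies verbatim to the intermediate graph $G\setminus u_{i_1}\cdots u_{i_{j-1}}$ together with the vertex $u_{i_j}$) and that the freedom to reorder deletions is legitimate (guaranteed by the remark in Definition~\ref{Intro} that $G\setminus X$ is independent of the deletion order). As a byproduct, the argument also shows that every vertex of $X$ is $\theta$-positive, matching the observation recorded in Definition~\ref{nice_set_independent}.
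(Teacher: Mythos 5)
Your argument is correct and is precisely the routine telescoping-interlacing argument the paper has in mind when it omits the proof of Proposition~\ref{one_side} as ``not hard'': equality in $\m(\theta, G\setminus X)\le \m(\theta,G)+|X|$ forces every single-vertex deletion in any ordering to raise the multiplicity by exactly one, and placing $u,v$ first in the ordering gives $\m(\theta, G\setminus uv)=\m(\theta,G)+2$. Nothing further is needed.
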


To complete the proof of Theorem \ref{theta-max-Tutte}, our aim for the rest of this section is to show that a $\theta$-nice set must be $\theta$-extreme. We shall need the following results.

\begin{lm}\label{path_interlacing}\textnormal {\cite[Corollary 2.5]{G}} For any root $\theta$ of $\mu(G,x)$ and a path $P$ in $G$,
\[ \textnormal{mult}  (\theta, G \setminus P) \ge \m(\theta, G)-1. \]
\end{lm}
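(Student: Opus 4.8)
The plan is to reduce the statement to Godsil's path identity and then compare orders of vanishing at $x=\theta$. If $P$ consists of a single vertex $u$, then $G\setminus P=G\setminus u$ and the inequality is exactly the lower half of vertex interlacing (Lemma~\ref{interlacing}), so I may assume $P$ has at least two vertices and denote its endpoints by $u\neq v$. The tool I would invoke is the classical identity
\[
\mu(G\setminus u,x)\,\mu(G\setminus v,x)-\mu(G,x)\,\mu(G\setminus uv,x)=\sum_{Q}\mu(G\setminus V(Q),x)^2,
\]
in which $Q$ runs over all paths from $u$ to $v$ in $G$; it can be derived by induction on the number of edges using the recurrences of Theorem~\ref{basic_property}(b),(c). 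The decisive feature is that the given path $P$ is one of the summands $Q$ on the right-hand side.

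Next I would read off the order of vanishing at $x=\theta$ on each side, writing $a=\m(\theta,G)$. On the left, $\mu(G\setminus u,x)\mu(G\setminus v,x)$ vanishes to order $\m(\theta,G\setminus u)+\m(\theta,G\setminus v)\ge 2a-2$ by the lower bound in Lemma~\ref{interlacing}, while $\mu(G,x)\mu(G\setminus uv,x)$ vanishes to order $a+\m(\theta,G\setminus uv)\ge a+(a-2)=2a-2$ by applying interlacing twice. Hence the left-hand side, being a difference of these two polynomials, vanishes to order at least $2a-2$.

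The crucial point is the right-hand side: it is a sum of squares of real polynomials, and because $\theta$ is real these squares cannot cancel at $x=\theta$. Writing $\mu(G\setminus V(Q),x)=(x-\theta)^{m_Q}h_Q(x)$ with $h_Q(\theta)\neq 0$ and setting $m^*=\min_Q \m(\theta,G\setminus V(Q))$, the coefficient of $(x-\theta)^{2m^*}$ in the sum is $\sum_{Q:\,m_Q=m^*}h_Q(\theta)^2>0$, so the right-hand side vanishes to order exactly $2m^*$. Equating the two orders forces $2m^*\ge 2a-2$, that is $\min_Q \m(\theta,G\setminus V(Q))\ge a-1$. Since $P$ is among the paths $Q$, this yields $\m(\theta,G\setminus P)\ge m^*\ge a-1=\m(\theta,G)-1$.

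I expect the genuine obstacle to be the justification of the displayed identity rather than the interlacing bookkeeping: one has to choose the right induction (for instance expanding $\mu(G\setminus u,x)$ about $u$ via Theorem~\ref{basic_property}(c), or deleting an edge at $u$ via part (b)) and verify that the family of $u$--$v$ paths transforms consistently under the recurrence. The only other delicate step, the absence of cancellation in a sum of squares, is precisely where realness of $\theta$ enters; since every root of a matching polynomial is real, each $h_Q(\theta)^2$ is nonnegative and the sum over the minimizing paths is strictly positive, which is what pins down the order of the right-hand side.
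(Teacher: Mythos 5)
Your argument is correct. Note that the paper itself offers no proof of this lemma --- it is quoted directly as \cite[Corollary 2.5]{G} --- so there is nothing internal to compare against; but your derivation is exactly the standard one: specialize the Heilmann--Lieb/Christoffel--Darboux path identity at the endpoints of $P$, bound the order of vanishing of the left side at $x=\theta$ by two applications of Lemma~\ref{interlacing}, and use the fact that a sum of squares of real polynomials vanishes at a real point to order exactly twice the minimum order of its summands. This is the same sum-of-squares mechanism the paper itself deploys in the proof of Lemma~\ref{nonpositive_path}, and the only input you leave uncited --- the identity $\mu(G\setminus u,x)\mu(G\setminus v,x)-\mu(G,x)\mu(G\setminus uv,x)=\sum_{Q}\mu(G\setminus Q,x)^{2}$ --- is precisely the result the paper quotes there as \cite[Theorem 6.3]{HL} and \cite[Lemma 2.4]{G}, so no gap remains.
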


\begin{lm}\label{godsil_positive}\textnormal {\cite[Theorem 4.2]{G}}
Let $u$ be a $\theta$-positive vertex in $G$. Then
\begin{itemize}
\item[\textnormal{(a)}] if $v$ is $\theta$-essential in $G$ then it
is $\theta$-essential in $G \setminus u$,
\item[\textnormal{(b)}] if $v$ is $\theta$-positive in $G$ then it
is $\theta$-essential or $\theta$-positive in $G \setminus u$,
\item[\textnormal{(c)}] if $u$ is $\theta$-neutral in $G$ then it is
$\theta$-essential or $\theta$-neutral in $G \setminus u$.
\end{itemize}
\end{lm}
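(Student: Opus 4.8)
Write $m=\m(\theta,G)$. Since $u$ is $\theta$-positive, $\m(\theta,G\setminus u)=m+1$, so in the graph $G\setminus u$ the base multiplicity is $m+1$, and a second vertex $v$ is $\theta$-essential, $\theta$-neutral, or $\theta$-positive in $G\setminus u$ according as $k:=\m(\theta,G\setminus uv)$ equals $m$, $m+1$, or $m+2$. Thus all three parts are assertions about the single integer $k$, which by interlacing (Lemma \ref{interlacing}), applied by deleting $v$ from $G\setminus u$, already satisfies $m\le k\le m+2$. The plan is to pin down $k$ by intersecting this range with a second range obtained from the reverse order of deletion, and, in the one case where this fails, to invoke a positivity identity.

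For (a), $v$ is $\theta$-essential in $G$, so $\m(\theta,G\setminus v)=m-1$; deleting $u$ from $G\setminus v$ gives $m-2\le k\le m$, and intersecting with $m\le k\le m+2$ forces $k=m$, i.e. $v$ is $\theta$-essential in $G\setminus u$. For (c) (which, to match (a) and (b), should read ``if $v$ is $\theta$-neutral in $G$''), $\m(\theta,G\setminus v)=m$; deleting $u$ from $G\setminus v$ gives $m-1\le k\le m+1$, and intersecting with $m\le k\le m+2$ yields $k\in\{m,m+1\}$, i.e. $v$ is $\theta$-essential or $\theta$-neutral in $G\setminus u$. Both parts are therefore pure interlacing.

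The remaining part (b) is the crux: there $v$ is $\theta$-positive, so $\m(\theta,G\setminus v)=m+1$ and the second range coincides with the first; interlacing alone cannot exclude the forbidden value $k=m+1$ (the $\theta$-neutral case). To rule it out I would use the identity
\[ \mu(G\setminus u,x)\,\mu(G\setminus v,x)-\mu(G,x)\,\mu(G\setminus uv,x)=\sum_{P}\mu(G\setminus P,x)^{2}, \]
the sum being over all paths $P$ from $u$ to $v$ in $G$; this can be proved by induction on the number of edges using the recurrences of Theorem \ref{basic_property}(b),(c), tracking how a path through a chosen edge or vertex is deleted. Call the left-hand side $g(x)$. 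Being a sum of squares of real polynomials, $g(x)\ge 0$ for every real $x$, whence every real zero of $g$ has even order of vanishing.

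Now compare orders of vanishing at $\theta$. The product $\mu(G\setminus u,x)\mu(G\setminus v,x)$ vanishes to order $(m+1)+(m+1)=2m+2$, while $\mu(G,x)\mu(G\setminus uv,x)$ vanishes to order $m+k$. If $k=m+1$ the second product vanishes to order $2m+1<2m+2$, so this lower-order contribution is uncancelled and $g$ vanishes to the odd order $2m+1$, contradicting that every real zero of $g$ has even order. Hence $k\neq m+1$, and with interlacing $k\in\{m,m+2\}$, i.e. $v$ is $\theta$-essential or $\theta$-positive in $G\setminus u$, proving (b). The sole gap is when no path joins $u$ to $v$, so that $g\equiv 0$ and the identity is vacuous; but then $u$ and $v$ lie in different components, and multiplying matching polynomials over components (Theorem \ref{basic_property}(a)) shows that deleting the two $\theta$-positive vertices raises the multiplicity by $1$ in each of their components, giving $k=m+2$ directly. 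I expect the main obstacle to be justifying the positivity step, i.e. the sum-of-squares identity for $g$; once its nonnegativity is in hand, the parity bookkeeping and the two interlacing sandwiches settle all three parts.
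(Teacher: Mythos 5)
Your proposal is correct, but a comparison with ``the paper's proof'' is moot here: the paper does not prove this lemma at all, it imports it as \cite[Theorem 4.2]{G}, so yours is the only argument on the table, and it is essentially Godsil's original one. Parts (a) and (c) by intersecting the two interlacing sandwiches (delete $v$ then $u$ versus $u$ then $v$) are exactly right, and you correctly diagnosed the typo in (c): the hypothesis must read ``$v$ is $\theta$-neutral,'' since $u$ is $\theta$-positive by assumption. For (b), your parity argument is sound: the Heilmann--Lieb identity (which the paper itself quotes and uses later, in the proof of Lemma \ref{nonpositive_path}) exhibits $g(x)=\mu(G\setminus u,x)\mu(G\setminus v,x)-\mu(G,x)\mu(G\setminus uv,x)$ as a sum of squares, and a nonzero sum of squares vanishes to even order at every real point, whereas $k=m+1$ would force $\mathrm{ord}_\theta(g)=2m+1$ exactly, since the order-$(2m+1)$ term of $\mu(G,x)\mu(G\setminus uv,x)$ cannot be cancelled by the order-$(2m+2)$ product $\mu(G\setminus u,x)\mu(G\setminus v,x)$. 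Your handling of the degenerate case $g\equiv 0$ is also fine, though it can be shortened: $g\equiv 0$ gives $\mu(G\setminus u,x)\mu(G\setminus v,x)=\mu(G,x)\mu(G\setminus uv,x)$, and comparing orders at $\theta$ yields $k=m+2$ directly, without routing through components (and note $g\equiv 0$ cannot coexist with $k=m+1$ anyway, so the main contradiction never needed it). The one step you flag as the ``main obstacle,'' nonnegativity of $g$, is not an obstacle in context: the identity is standard (\cite[Theorem 6.3]{HL}, \cite[Lemma 2.4]{G}) and is accepted as known elsewhere in this very paper, so citing it rather than reproving it by induction is entirely adequate.
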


\begin{rem}
The assertions of Lemma \ref{godsil_positive}, excluding part (b), still hold even if $\theta$ is not a root of $\mu(G,x)$.
\end{rem}

The following corollary is an immediate consequence of part (a) of Lemma \ref{godsil_positive}.
\begin{cor}\label{delete-essential-positive}
Suppose $u$ is $\theta$-positive and $v$ is $\theta$-essential in $G$. Then $u$ remains $\theta$-positive in $G \setminus v$.
\end{cor}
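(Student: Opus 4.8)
The plan is to read off the defining multiplicity equalities and use Lemma \ref{godsil_positive}(a) as a bridge. Writing $m=\m(\theta,G)$, the hypotheses say $\m(\theta,G\setminus u)=m+1$ (since $u$ is $\theta$-positive) and $\m(\theta,G\setminus v)=m-1$ (since $v$ is $\theta$-essential). The goal is to show that $u$ is $\theta$-positive in $G\setminus v$, i.e.\ $\m(\theta,(G\setminus v)\setminus u)=\m(\theta,G\setminus v)+1$; given the second equality this amounts to showing $\m(\theta,G\setminus uv)=m$.

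First I would apply Lemma \ref{godsil_positive}(a) to the $\theta$-positive vertex $u$: since $v$ is $\theta$-essential in $G$, it remains $\theta$-essential in $G\setminus u$. By the definition of $\theta$-essential this says $\m(\theta,(G\setminus u)\setminus v)=\m(\theta,G\setminus u)-1=(m+1)-1=m$.

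Then I would invoke the order-independence of vertex deletion recorded in Definition \ref{Intro}, namely $(G\setminus u)\setminus v=(G\setminus v)\setminus u=G\setminus uv$, to conclude $\m(\theta,G\setminus uv)=m$. Combining this with $\m(\theta,G\setminus v)=m-1$ yields $\m(\theta,(G\setminus v)\setminus u)=m=\m(\theta,G\setminus v)+1$, which is exactly the statement that $u$ is $\theta$-positive in $G\setminus v$.

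There is really no serious obstacle here; the only subtlety — and the reason this corollary is not a verbatim restatement of the lemma — is that Lemma \ref{godsil_positive}(a) tracks the status of $v$ after deleting $u$, whereas we want the status of $u$ after deleting $v$. The symmetry $G\setminus uv=G\setminus vu$ is precisely what lets the single quantity $\m(\theta,G\setminus uv)$ play both roles, so the conclusion drops out as soon as this value has been pinned down.
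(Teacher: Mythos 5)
Your argument is correct and is exactly the route the paper intends: the paper presents this corollary as an immediate consequence of part (a) of Lemma \ref{godsil_positive}, and your computation (apply part (a) to see $v$ stays $\theta$-essential in $G\setminus u$, deduce $\m(\theta,G\setminus uv)=m$, then use $G\setminus uv=G\setminus vu$ together with $\m(\theta,G\setminus v)=m-1$) is just that deduction written out in full.
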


\begin{lm}\label{all_positive}  Let $u_1,u_2,\dots, u_k$ be $\theta$-positive vertices in $G$. Then $\textnormal{mult} (\theta, G\setminus u_1u_2\dots u_k)$ is either equal to $\textnormal{mult} (\theta, G)+k$ or at most $\textnormal{mult} (\theta, G)+k-2$.
\end{lm}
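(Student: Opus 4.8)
The plan is to delete the $u_i$ one vertex at a time while tracking how many deletions actually raise the multiplicity, the guiding principle being a monotonicity of vertex type: deleting a $\theta$-positive vertex can never turn another $\theta$-positive vertex into a $\theta$-neutral one, but only leave it $\theta$-positive or make it $\theta$-essential. Granting this, the greedy strategy is to keep deleting a $u_i$ that is $\theta$-positive in the current graph for as long as one exists. Write $m=\m(\theta,G)$. By Lemma~\ref{godsil_positive}(a) a $\theta$-essential survivor stays $\theta$-essential, and by Lemma~\ref{godsil_positive}(b) a $\theta$-positive survivor stays $\theta$-positive or becomes $\theta$-essential; so, inductively, at every stage each surviving $u_i$ is $\theta$-positive or $\theta$-essential, never $\theta$-neutral. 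If this greedy phase performs $t$ deletions, each raises the multiplicity by exactly $1$, so the resulting graph $G_t$ satisfies $\m(\theta,G_t)=m+t$.

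Two cases then finish the argument. If $t=k$, then $\m(\theta,G\setminus u_1\cdots u_k)=m+k$, the first alternative. If $t<k$, the phase halts precisely because no surviving $u_i$ is $\theta$-positive in $G_t$; by the monotonicity above each of the $k-t\ge 1$ survivors is therefore $\theta$-essential in $G_t$. Deleting one such vertex lowers the multiplicity by exactly $1$ to $m+t-1$, and by interlacing (Lemma~\ref{interlacing}) each of the remaining $k-t-1$ deletions raises it by at most $1$, so
\[ \m(\theta,G\setminus u_1\cdots u_k)\le (m+t-1)+(k-t-1)=m+k-2, \]
the second alternative. Note that a single $\theta$-essential deletion suffices, so I never have to control how the later deletions interact.

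The crux is justifying the monotonicity claim when the current graph does not have $\theta$ as a root, since Lemma~\ref{godsil_positive}(b) is guaranteed only in the presence of a root. Inspecting the greedy phase shows this is needed at most once: there the multiplicity strictly increases from $m$, so every graph after the first deletion already has positive multiplicity, and the only possibly root-free step is the very first deletion in the case $m=0$. For that single step I would argue by the parity of the order of vanishing at $\theta$. By a standard identity for matching polynomials (cf.\ \cite{G0}), the difference $\mu(G\setminus u,x)\mu(G\setminus v,x)-\mu(G,x)\mu(G\setminus uv,x)$ equals the square of the path-generating polynomial $\sum_{P}\mu(G\setminus P,x)$, the sum being over all $u$--$v$ paths $P$; hence this difference vanishes at $\theta$ to even order. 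When $m=0$ and $u,v$ are both $\theta$-positive, $\mu(G\setminus u,x)\mu(G\setminus v,x)$ vanishes to order $2$ while $\mu(G,x)$ does not vanish, so if $\m(\theta,G\setminus uv)$ were equal to $1$ the difference would vanish to order exactly $1$, an odd number, a contradiction. Thus $\m(\theta,G\setminus uv)\in\{0,2\}$, which is precisely the statement that deleting the $\theta$-positive vertex $u$ leaves the $\theta$-positive vertex $v$ either $\theta$-essential or $\theta$-positive.

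This parity step is the one place where the stated lemmas do not obviously suffice and where I expect the real work to lie; the rest of the argument is a clean bookkeeping of the greedy deletion against interlacing. For $m\ge 1$ no identity is needed, as Lemma~\ref{godsil_positive}(b) applies directly from the outset, and the two-case conclusion goes through verbatim.
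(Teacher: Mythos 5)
Your argument is correct and, at its core, is the same as the paper's: both proofs rest on the observation (via Lemma~\ref{godsil_positive}) that deleting a $\theta$-positive vertex leaves every other $\theta$-positive vertex either $\theta$-positive or $\theta$-essential, never $\theta$-neutral, and then finish with interlacing. The paper packages this as an induction on $k$ (if the first $k-1$ deletions each raise the multiplicity by one, then $u_k$ is $\theta$-positive or $\theta$-essential in the resulting graph; otherwise interlacing already gives the bound $\m(\theta,G)+k-2$), whereas you run a greedy deletion and spend a single $\theta$-essential deletion to force the drop; the two bookkeepings are interchangeable since $G\setminus u_1\cdots u_k$ does not depend on the order of deletion. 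Where you genuinely add something is the case $\m(\theta,G)=0$: the Remark following Lemma~\ref{godsil_positive} explicitly withholds part (b) when $\theta$ is not a root, yet the paper's proof of Lemma~\ref{all_positive} applies part (b) starting from $G$ itself without comment, so your parity argument is a welcome patch for the one step where the cited lemma is not guaranteed --- and that case is actually needed later, e.g.\ Lemma~\ref{base_nice_case} invokes this lemma with $k$ allowed to be zero. One small correction: the Heilmann--Lieb identity (as quoted in Lemma~\ref{nonpositive_path}) gives $\mu(G\setminus u,x)\mu(G\setminus v,x)-\mu(G,x)\mu(G\setminus uv,x)$ as the \emph{sum of squares} $\sum_{P}\mu(G\setminus P,x)^{2}$ over $u$--$v$ paths, not the square of the sum; your parity conclusion survives because a sum of squares of real polynomials still vanishes to even order at a real point (the minimal-order terms cannot cancel), but the identity should be quoted correctly.
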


\begin{proof} We shall prove by induction on  $k$. Clearly it is true for $k=1$. Suppose $k\geq 2$. Assume that it is true for $k-1$, that is to say, $\textnormal{mult} (\theta, G\setminus u_1u_2\dots u_{k-1})$ is either equal to $\textnormal{mult} (\theta, G)+k-1$ or at most $\textnormal{mult} (\theta, G)+k-3$. In the latter we are done by Lemma \ref{interlacing}. In the former, $u_{i}$ is $\theta$-positive in $G \setminus u_{1} \cdots u_{i-1}$ for all $1 \le i \le k-1$. By Lemma \ref{godsil_positive}, $u_{i}$ is either $\theta$-positive or $\theta$-essential in $G \setminus u_{1} \cdots u_{i-1}$ for all $1 \le i \le k$. In particular, $u_{k}$ is either $\theta$-positive or $\theta$-essential in $G \setminus u_{1} \cdots u_{k-1}$ whence $\textnormal{mult} (\theta, G \setminus u_{1} \cdots u_{k}) = \textnormal{mult} (\theta, G)+k$ or $\textnormal{mult} (\theta, G)+k-2$.
\end{proof}

\begin{lm}\label{neutral_neighbor} Suppose $\theta \not = 0$ and $u$ is a $\theta$-essential vertex in $G$. Then $u$ has a neighbor which is $\theta$-neutral in $G \setminus u$.
\end{lm}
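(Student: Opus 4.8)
The plan is to read the relevant multiplicity straight off the neighbourhood recurrence for $\mu(G,x)$ (Theorem \ref{basic_property}(c)) and then to constrain each term appearing in it by path interlacing. Set $m=\m(\theta,G)$, so that $\m(\theta,G\setminus u)=m-1$ since $u$ is $\theta$-essential. I would rewrite Theorem \ref{basic_property}(c) as
\[ \sum_{i\sim u}\mu(G\setminus ui,x)=x\,\mu(G\setminus u,x)-\mu(G,x), \]
and track the multiplicity of $\theta$ on the right. Because $\theta\neq 0$, multiplying by $x$ does not change the multiplicity of $\theta$, so $x\,\mu(G\setminus u,x)$ has $\theta$ as a root of multiplicity exactly $m-1$, whereas $\mu(G,x)$ has it as a root of multiplicity $m$. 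As these two multiplicities are unequal, the multiplicity of a difference equals the smaller one, so the right-hand side—and hence $\sum_{i\sim u}\mu(G\setminus ui,x)$—has $\theta$ as a root of multiplicity exactly $m-1$. This is the only point at which the hypothesis $\theta\neq 0$ is used, and it is indispensable: for $\theta=0$ there are no neutral vertices at all.

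Next I would bound each summand from below. For a neighbour $i$ of $u$, the vertices $u,i$ form a path $P$ in $G$ with $G\setminus P=G\setminus ui$, so Lemma \ref{path_interlacing} yields $\m(\theta,G\setminus ui)\ge m-1$ for every $i\sim u$. Combining the two facts closes the argument: if every neighbour satisfied $\m(\theta,G\setminus ui)\ge m$, then each polynomial $\mu(G\setminus ui,x)$ would be divisible by $(x-\theta)^{m}$, forcing their sum to have $\theta$ as a root of multiplicity at least $m$, contradicting the previous paragraph. Hence at least one neighbour $i$ has $\m(\theta,G\setminus ui)=m-1=\m(\theta,G\setminus u)$, which says precisely that $i$ is $\theta$-neutral in $G\setminus u$.

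I do not anticipate a serious obstacle; the only care needed is routine bookkeeping of multiplicities of products, sums, and differences of polynomials near $x=\theta$. The single genuinely load-bearing external input is the path-interlacing bound of Lemma \ref{path_interlacing}: it rules out the \emph{a priori} possible value $\m(\theta,G\setminus ui)=m-2$ (i.e.\ that $i$ is $\theta$-essential in $G\setminus u$). Without that lower bound the recurrence alone would only guarantee a non-positive neighbour, not a neutral one. (One could instead exclude $\m(\theta,G\setminus ui)=m-2$ via the positivity of residues in the Stieltjes-type expansion of $\mu(G\setminus ui,x)/\mu(G\setminus u,x)$, but invoking Lemma \ref{path_interlacing} is far cleaner.)
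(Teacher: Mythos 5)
Your proof is correct and follows essentially the same route as the paper's: both arguments rest on the vertex recurrence of Theorem \ref{basic_property}(c) together with $\theta\neq 0$, and both invoke Lemma \ref{path_interlacing} to exclude neighbours that are $\theta$-essential in $G\setminus u$. The only difference is presentational — you compute the exact multiplicity $m-1$ of $\sum_{i\sim u}\mu(G\setminus ui,x)$ and deduce the existence of a neutral neighbour directly, whereas the paper phrases the same count as a contradiction under the assumption that all neighbours are $\theta$-positive in $G\setminus u$.
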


\begin{proof} By Lemma \ref{path_interlacing}, no neighbor of $u$ can be $\theta$-essential in $G \setminus u$. Suppose all neighbors of $u$ are $\theta$-positive in $G \setminus u$. Then, by comparing multiplicities of $\theta$ on both sides of the recurrence $\mu (G,x)=x\mu  (  G\setminus u,x)-\sum_{v\sim u} \mu  (  G\setminus uv,x)$ (part (c) of Theorem \ref{basic_property}) and the fact that $\theta \not = 0$, we observe that $\textnormal{mult} (\theta, G \setminus u) \ge \textnormal{mult} (\theta, G)$, contradicting the assumption that $u$ is $\theta$-essential in $G$.
\end{proof}

\begin{lm}\label{base_nice_case}  Let $G$ be a graph and $X\subseteq V(G)$ with $\vert X\vert=3$. If  $X$ is $\theta$-nice then $X$ is a $\theta$-extreme set.
\end{lm}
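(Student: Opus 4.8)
Write $m=\m(\theta,G)$ and $X=\{u,v,w\}$. Since $X$ is $\theta$-nice, each of $u,v,w$ is $\theta$-positive and every pair satisfies $\m(\theta,G\setminus\,\cdot\,)=m+2$. The plan is to pin down $\m(\theta,G\setminus uvw)$. By Lemma \ref{all_positive} applied to the three $\theta$-positive vertices, $\m(\theta,G\setminus uvw)$ equals $m+3$ or is at most $m+1$; on the other hand, deleting $w$ from $G\setminus uv$ and invoking interlacing (Lemma \ref{interlacing}) gives $\m(\theta,G\setminus uvw)\ge(m+2)-1=m+1$. Hence it suffices to rule out the value $m+1$, since $\m(\theta,G\setminus uvw)=m+3$ is precisely the assertion that $X$ is $\theta$-extreme. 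So I would assume for contradiction that $\m(\theta,G\setminus uvw)=m+1$. By the symmetry among $u,v,w$, this forces $w$ to be $\theta$-essential in $G\setminus uv$, $v$ to be $\theta$-essential in $G\setminus uw$, and $u$ to be $\theta$-essential in $G\setminus vw$.

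When $\theta=0$ there are no $\theta$-neutral vertices, and the equivalence of $0$-extreme and $0$-nice sets is already contained in Theorem \ref{extreme_zero}, so I would dispose of this case by citing the classical result. For $\theta\neq0$ I would exploit Lemma \ref{neutral_neighbor}: since $u$ is $\theta$-essential in $G\setminus vw$ it has a neighbour $a$ (in $G\setminus vw$) that is $\theta$-neutral in $G\setminus uvw$, and symmetrically $v,w$ furnish $\theta$-neutral neighbours $b,c$ in $G\setminus uvw$. The idea is then to feed these facts into the vertex recurrence $\mu(G\setminus vw,x)=x\,\mu(G\setminus uvw,x)-\sum_{i\sim u}\mu(G\setminus uvwi,x)$ of Theorem \ref{basic_property}(c), expanded at $u$ (and symmetrically at $v$ and $w$). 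Because $\theta\neq0$, the term $x\,\mu(G\setminus uvw,x)$ has $\theta$-multiplicity exactly $m+1$ while the left-hand side has multiplicity $m+2$, so comparing the coefficients of $(x-\theta)^{m+1}$ shows that the neighbour-sum must reproduce the leading behaviour of $x\,\mu(G\setminus uvw,x)$ at $\theta$. Running this comparison for the three symmetric expansions, together with the location of the $\theta$-neutral vertices $a,b,c$, should yield incompatible constraints, contradicting $\m(\theta,G\setminus uvw)=m+1$.

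I expect the multiplicity and coefficient bookkeeping in the case $\theta\neq0$ to be the main obstacle. Each of the three ``essential in the pair-deletion'' conditions is, on its own, perfectly consistent with Godsil's monotonicity (Lemma \ref{godsil_positive}(b)): deleting a $\theta$-positive vertex may legitimately turn another $\theta$-positive vertex $\theta$-essential. The contradiction can therefore only emerge from using the full three-fold symmetry at once, together with the rigidity that the factor $x$ (with $\theta\neq0$) injects into the recurrence, exactly as in the proof of Lemma \ref{neutral_neighbor}. Keeping track of which neighbours of $u,v,w$ are $\theta$-neutral rather than $\theta$-positive in $G\setminus uvw$, and verifying that the relevant order-$(m+1)$ coefficients genuinely fail to cancel, is the delicate point. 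Should the direct coefficient comparison prove unwieldy, an alternative route is to analyse the Gallai--Edmonds decomposition of $G\setminus uv$ (in which $w$ is $\theta$-essential) through Corollary \ref{P:C7}, and trace how $u$ and $v$ attach to the $\theta$-critical component containing $w$ so as to contradict the $\theta$-positivity of $u$ and $v$ in $G$.
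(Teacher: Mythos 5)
Your reduction is sound and matches the paper's opening moves: Lemma \ref{all_positive} plus interlacing pins $\m(\theta,G\setminus uvw)$ to $m+3$ or $m+1$, the case $\theta=0$ is disposed of via Theorem \ref{extreme_zero}, and for $\theta\neq 0$ Lemma \ref{neutral_neighbor} supplies a neighbour of $u$ that is $\theta$-neutral in $G\setminus uvw$. But at the decisive moment the argument stops being a proof: you assert that comparing the order-$(m+1)$ coefficients in the three vertex recurrences ``should yield incompatible constraints,'' without exhibiting any. What those recurrences actually give you is exactly the content of Lemma \ref{neutral_neighbor} applied three times --- each of $u,v,w$ has a neutral neighbour after deleting the other two --- and three such statements are not visibly contradictory; each is, as you note yourself, individually consistent with Lemma \ref{godsil_positive}. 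Nothing in your sketch explains why the three neighbour-sums cannot all simultaneously have multiplicity exactly $m+1$, so the contradiction is postulated rather than derived.

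The paper closes this gap by a different and more concrete device, and it uses only \emph{one} of the three essential vertices, not the threefold symmetry. Having found the neighbour $z$ of $x_1$ that is $\theta$-neutral in $G\setminus x_2x_3x_1$, it deletes the single \emph{edge} $e=(x_1,z)$ and applies Theorem \ref{basic_property}(b) to compute $\m(\theta,(G\setminus x_2x_3)-e)=k+1$ and, in the main case, $\m(\theta,G-e)=k$. The key observation is that $e$ is disjoint from $\{x_2,x_3\}$, so $(G-e)\setminus x_1x_2=G\setminus x_1x_2$ and $(G-e)\setminus x_1x_3=G\setminus x_1x_3$; hence $x_2,x_3$ are still $\theta$-positive in $G-e$ (or, in the other case, in $G\setminus x_1z$ via Corollary \ref{delete-essential-positive}). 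Now Lemma \ref{all_positive} forces $\m(\theta,(G-e)\setminus x_2x_3)$ to be either $k+2$ or at most $k$ --- never $k+1$ --- which contradicts the computed value. That ``never the middle value'' rigidity of Lemma \ref{all_positive}, transported to the edge-deleted graph, is the actual engine of the contradiction, and it is precisely the step your proposal lacks. As written, the proof is incomplete.
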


\begin{proof} The case $\theta=0$ is covered in Theorem \ref{extreme_zero}. So we may assume $\theta\neq 0$.  Let $X=\{x_1,x_2,x_3\}$ and $\textnormal {mult} (\theta,G)=k$ (We allow $k$ to take zero value). Now $\textnormal {mult} (\theta,G\setminus x_2)=k+1$ and $\textnormal {mult} (\theta,G\setminus x_2x_3)=k+2=\textnormal {mult} (\theta,G\setminus x_2x_1)$. This implies that $x_1$ and $x_3$ are $\theta$-positive in $G\setminus x_2$. By Lemma \ref{godsil_positive}, $x_1$ is either $\theta$-positive or $\theta$-essential in $G\setminus x_2x_3$.  If the former holds, then $\textnormal {mult} (\theta,G\setminus x_2x_3x_1)=k+3$ and $X$ is an $\theta$-extreme set.
So we may assume the latter holds. Then $\textnormal {mult} (\theta,G\setminus x_2x_3x_1)=k+1$.

By Lemma \ref{neutral_neighbor}, $x_1$ is adjacent to a vertex $z$ in $G\setminus x_2x_3$, where $z$ is $\theta$-neutral in $G\setminus x_2x_3x_1$. Therefore $\textnormal {mult} (\theta,G\setminus x_2x_3x_1z)=k+1$. By part (b) of Theorem \ref{basic_property}, $\mu (  G\setminus x_2x_3,x)=\mu (  (G\setminus x_2x_3)-e,x)-\mu (  G\setminus x_2x_3x_1z, x)$ where $e=(x_1,z)$ is an edge of $G$. Since $\textnormal {mult} (\theta,G\setminus x_2x_3)=k+2$, we must have $\textnormal {mult} (\theta,(G\setminus x_2x_3)-e)=k+1$.

\noindent
{\bf Case 1.} Suppose $z$ is $\theta$-essential in $G\setminus x_1$. Then $\textnormal{mult} (\theta, G\setminus x_1z)=k$.  Recall that $\textnormal{mult} (\theta, G\setminus x_1x_2)=k+2=\textnormal{mult} (\theta, G\setminus x_1x_3)$. This implies that $x_2$ and $x_3$ are $\theta$-positive in $G\setminus x_1$. By Corollary \ref{delete-essential-positive}, $x_2$ and $x_3$ are $\theta$-positive in $G\setminus x_1z$. By Lemma \ref{all_positive}, $\textnormal {mult} (\theta,G\setminus x_1zx_2x_3)$ is either equal to $\textnormal{mult} (\theta, G\setminus x_1z)+2=k+2$ or at most $k$, a contrary to the fact that $\textnormal {mult} (\theta,G\setminus x_2x_3x_1z)=k+1$.

\noindent
{\bf Case 2.} Suppose $z$ is $\theta$-neutral or $\theta$-positive in $G\setminus x_1$. Then $\textnormal{mult} (\theta, G\setminus x_1z)\geq k+1$.  By part (b) of Theorem \ref{basic_property}, $\mu (  G)=\mu (  G-e,x)-\mu (  G\setminus x_1z, x)$. By comparing the multiplicity of $\theta$ as zero on both sides of the equation, we deduce that $\textnormal{mult} (\theta, G-e)=k$. Note that $(G-e)\setminus x_1x_2=G\setminus x_1x_2$ and $(G-e)\setminus x_1x_3=G\setminus x_1x_3$. Therefore $\textnormal{mult} (\theta, (G-e)\setminus x_1x_2)=k+2=\textnormal{mult} (\theta, (G-e)\setminus x_1x_3)$. This implies that $x_2$ and $x_3$ are $\theta$-positive in $G-e$. By Lemma \ref{all_positive}, $\textnormal{mult} (\theta, (G-e)\setminus x_2x_3)$ is either equal to $k+2$ or at most $k$, a contrary to the fact that $\textnormal {mult} (\theta,(G\setminus x_2x_3)-e)=k+1$.

Hence $\textnormal {mult} (\theta,G\setminus x_2x_3x_1)=k+3$ and $X$ is an $\theta$-extreme set.
\end{proof}

\begin{thm}\label{general_nice_case}  Let $G$ be a graph and $X\subseteq V(G)$ with $\vert X\vert>1$. Then  $X$ is an $\theta$-extreme set if and only if  $X$ is $\theta$-nice.
\end{thm}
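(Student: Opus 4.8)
The forward implication is exactly Proposition \ref{one_side}, so the plan is to establish only the converse: every $\theta$-nice set is $\theta$-extreme. I would argue by induction on $k=|X|$. The base cases are immediate: for $k=2$ a $\theta$-nice pair $\{u,v\}$ satisfies $\m(\theta, G\setminus uv)=\m(\theta,G)+2=\m(\theta,G)+|X|$ by definition, and the case $k=3$ is precisely Lemma \ref{base_nice_case}. Throughout I write $m=\m(\theta,G)$ and recall from Definition \ref{nice_set_independent} that every vertex of a $\theta$-nice set is $\theta$-positive, so that $\m(\theta, G\setminus x)=m+1$ for each $x\in X$.

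For the inductive step assume $k\ge 4$ and that the claim holds for every $\theta$-nice set of size strictly between $1$ and $k$. Fix any three vertices $x_1,x_2,x_3\in X$ and set $W=X\setminus\{x_1,x_2,x_3\}$, so that $|W|=k-3\ge 1$. First I would record that $\m(\theta, G\setminus W)=m+(k-3)$: every subset of $X$ of size at least $2$ is again $\theta$-nice, niceness being a condition on pairs and hence inherited by subsets, so if $|W|\ge 2$ this equality follows from the induction hypothesis, while if $|W|=1$ it follows from $\theta$-positivity. The crucial observation is then that the triple $\{x_1,x_2,x_3\}$ is $\theta$-nice in the graph $G':=G\setminus W$. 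Indeed, for any distinct $i,j\in\{1,2,3\}$ the set $W\cup\{x_i,x_j\}$ is a $\theta$-nice subset of $X$ of size $k-1$, so by the induction hypothesis it is $\theta$-extreme in $G$, whence
\[ \m(\theta, G'\setminus x_ix_j)=\m(\theta, G\setminus(W\cup\{x_i,x_j\}))=m+(k-1)=\m(\theta,G')+2, \]
which is exactly the $\theta$-niceness of $\{x_1,x_2,x_3\}$ in $G'$ (deletion order being irrelevant by Definition \ref{Intro}).

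Now I would invoke Lemma \ref{base_nice_case}, applied not to $G$ but to the graph $G'$: since $\{x_1,x_2,x_3\}$ is a $\theta$-nice set of size three in $G'$, it is $\theta$-extreme in $G'$, giving $\m(\theta, G'\setminus x_1x_2x_3)=\m(\theta,G')+3=m+k$. Because $G'\setminus x_1x_2x_3=G\setminus X$, this yields $\m(\theta, G\setminus X)=m+|X|$, so $X$ is $\theta$-extreme and the induction closes. I expect the genuine difficulty of the theorem to lie entirely in the already-proved base case Lemma \ref{base_nice_case}, where the neutral-neighbor argument of Lemma \ref{neutral_neighbor} together with the case analysis on the vertex $z$ does the real work. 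The inductive step itself should be routine once the right reduction is in place: delete the $k-3$ ``extra'' vertices of $W$ first, so that the three survivors form a $\theta$-nice triple to which the established size-three case applies. The only point needing care is the degenerate case $|W|=1$, where $\theta$-positivity must be used in place of the induction hypothesis to obtain $\m(\theta,G\setminus W)=m+(k-3)$.
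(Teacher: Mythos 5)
Your proposal is correct and follows essentially the same route as the paper: induct on $|X|$, use the induction hypothesis on the size-$(k-1)$ subsets to show that a chosen triple remains $\theta$-nice after deleting the remaining vertices, and then invoke Lemma \ref{base_nice_case} in the reduced graph. The only cosmetic difference is that the paper runs the inductive step from $|X|\ge 3$ with a possibly empty residual set $X_1$ (deducing $\m(\theta,G\setminus X_1)=\m(\theta,G)+|X_1|$ via interlacing from the extreme sets $X_1\cup\{a,b\}$, etc.), whereas you treat $k=3$ as a base case and handle $|W|=1$ via $\theta$-positivity; both are sound.
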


\begin{proof} By Proposition \ref{one_side}, it is sufficient to prove that if $X$ is $\theta$-nice then $X$ is an $\theta$-extreme set. We shall prove by induction on $\vert X\vert$. Clearly it is true when $\vert X\vert=2$. Let $\vert X\vert\geq 3$. Assume that it is true for all $\theta$-nice sets $X'$ with $\vert X'\vert<\vert X\vert$.

Let $a,b,c\in X$ and $X_1=X\setminus \{a,b,c\}$ ($X_1$ could be empty). Note that $X_1\cup \{a,b\}$, $X_1\cup \{a,c\}$ and $X_1\cup \{b,c\}$ are all $\theta$-nice sets. By induction, all of them are $\theta$-extreme sets.  By using Lemma \ref{interlacing}, it is not hard to deduce that $\textnormal{mult} (\theta, G\setminus X_1)=\textnormal{mult} (\theta, G)+\vert X_1\vert$. Then $\{a,b,c\}$ is a $\theta$-nice set in $G\setminus X_1$. By Lemma \ref{base_nice_case}, $\{a,b,c\}$ is an $\theta$-extreme set in $G\setminus X_1$ and $\textnormal{mult} (\theta, G\setminus X)=\textnormal{mult} (\theta, G)+\vert X_1\vert+3=\textnormal{mult} (\theta, G)+\vert X\vert$. Hence $X$ is an $\theta$-extreme set.
\end{proof}

\section{The $S_{\theta}$-graphs}

This section is devoted to the graph $S_{\theta}(G)$ which is a supergraph of $G$ obtained by joining any $\theta$-special vertex to all the other vertices. Formally,

\begin {dfn}\label {BP:D3b}  Let $G$ be a graph and $\theta$ be a real number. Then the graph $S_{\theta}(G)$ is defined by $V(S_{\theta}(G))=V(G)$ and $(w,z)\in E(S_{\theta}(G))$ if and only if $(w,z)\in E(G)$ or $w\in A_{\theta}(G)$ and $z\in V(G)$.
\end {dfn}

We shall prove that the graph $S_{\theta}(G)$ and $G$ have the same Gallai-Edmonds decomposition (Corollary \ref{similar_gallai_edmond}). We require the following lemmas:

\begin{lm}\label{existence_essential}\textnormal {\cite[Lemma 3.1]{G}} Suppose $\m(\theta, G)>0$. Then $G$ contains at least one $\theta$-essential vertex.
\end{lm}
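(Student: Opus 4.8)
The plan is to exploit the differentiation formula, part (d) of Theorem \ref{basic_property}, which expresses the derivative of the matching polynomial as the sum of the matching polynomials of the vertex-deleted subgraphs. The guiding observation is that differentiating a polynomial lowers the multiplicity of any root of positive multiplicity by exactly one, and this exactness will force at least one summand on the right-hand side to realize the reduced multiplicity, which is precisely what identifies a $\theta$-essential vertex.

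First I would set $k=\m(\theta,G)$, so $k\geq 1$ by hypothesis, and write $\mu(G,x)=(x-\theta)^k q(x)$ with $q(\theta)\neq 0$. Differentiating gives $\frac{d}{dx}\mu(G,x)=(x-\theta)^{k-1}\bigl(k\,q(x)+(x-\theta)q'(x)\bigr)$, and the bracket evaluated at $\theta$ equals $k\,q(\theta)\neq 0$. Hence $\frac{d}{dx}\mu(G,x)$ has multiplicity exactly $k-1$ at $\theta$.

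Next, by part (d) of Theorem \ref{basic_property}, $\frac{d}{dx}\mu(G,x)=\sum_{i\in V(G)}\mu(G\setminus i,x)$. By interlacing (Lemma \ref{interlacing}), each factor satisfies $\m(\theta,G\setminus i)\geq k-1$, so $(x-\theta)^{k-1}$ divides every summand; I would write $\mu(G\setminus i,x)=(x-\theta)^{k-1}s_i(x)$ for polynomials $s_i$. Dividing the identity through by $(x-\theta)^{k-1}$ and evaluating at $\theta$ yields $\sum_{i}s_i(\theta)=k\,q(\theta)\neq 0$. Consequently $s_j(\theta)\neq 0$ for at least one vertex $j$, which means $\m(\theta,G\setminus j)=k-1=\m(\theta,G)-1$; that is, $j$ is $\theta$-essential, as desired.

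The only point requiring care — the main obstacle, such as it is — is the exactness of the multiplicity drop under differentiation, which genuinely uses $k\geq 1$ (for $k=0$ the derivative need not vanish at $\theta$ and the argument would be vacuous). Combined with interlacing, this exactness is exactly what prevents all the $s_i(\theta)$ from vanishing simultaneously; the remainder is routine factoring and evaluation.
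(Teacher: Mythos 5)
Your proof is correct. The paper does not prove this lemma itself --- it is quoted from Godsil's \emph{Algebraic matching theory} (Lemma 3.1) --- but your argument, combining part (d) of Theorem \ref{basic_property} with interlacing and the exact drop of multiplicity under differentiation, is precisely the standard proof of that result, and every step (in particular the key point that $\sum_i s_i(\theta)=k\,q(\theta)\neq 0$ forces some $s_j(\theta)\neq 0$) is sound.
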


\begin{lm}\label{Ku-Chen-neutral}\textnormal {\cite[Proposition 2.9]{KC}}
Let $u$ be a $\theta$-neutral vertex in $G$. Then
\begin{itemize}
\item[\textnormal{(a)}] if $v$ is $\theta$-positive in $G$ then it is
$\theta$-positive or $\theta$-neutral in $G \setminus u$;
\item[\textnormal{(b)}] if $v$ is $\theta$-essential in $G$ then it is
$\theta$-essential in $G \setminus u$;
\item[\textnormal{(c)}] if $v$ is $\theta$-neutral in $G$ then it is
$\theta$-neutral or $\theta$-positive in $G \setminus u$.
\end{itemize}
\end{lm}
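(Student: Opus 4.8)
The plan is to reduce all three parts of Lemma~\ref{Ku-Chen-neutral} to a single symmetric identity obtained by computing $\m(\theta, G\setminus uv)$ in two orders, dispatch the easy part by interlacing, and isolate one forbidden ``jump'' to be excluded in each of the two remaining parts. First note that since $N_0(G)=\varnothing$ the hypothesis is vacuous when $\theta=0$, so I may assume $\theta\neq 0$ throughout. Write $m=\m(\theta,G)$; since $u$ is $\theta$-neutral, $\m(\theta,G\setminus u)=m$. Set $\alpha=\m(\theta,G\setminus uv)-\m(\theta,G\setminus u)$ (the type of $v$ in $G\setminus u$), $\beta=\m(\theta,G\setminus v)-m$ (the type of $v$ in $G$), and $\gamma=\m(\theta,G\setminus uv)-\m(\theta,G\setminus v)$ (the type of $u$ in $G\setminus v$). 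Because $\m(\theta,G\setminus u)=m$, these satisfy $\alpha=\beta+\gamma$, and by interlacing (Lemma~\ref{interlacing}) each of $\alpha,\gamma$ lies in $\{-1,0,1\}$.

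For part (a), $\beta=+1$, so $\alpha=1+\gamma\ge 0$; with $\alpha\le 1$ this forces $\alpha\in\{0,1\}$, i.e.\ $v$ is $\theta$-neutral or $\theta$-positive in $G\setminus u$. For part (b), $\beta=-1$, so $\alpha=\gamma-1\le 0$ and $\alpha\ge -1$, whence $\alpha\in\{-1,0\}$, and I must rule out $\alpha=0$, equivalently $\gamma=+1$, i.e.\ that $u$ becomes $\theta$-positive in $G\setminus v$. For part (c), $\beta=0$, so $\alpha=\gamma$, and I must rule out $\alpha=-1$, equivalently $\gamma=-1$, i.e.\ that $u$ becomes $\theta$-essential in $G\setminus v$. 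Thus parts (b) and (c) each come down to excluding one forbidden transition of $u$ upon deleting $v$; everything else is pure interlacing bookkeeping on the identity $\alpha=\beta+\gamma$.

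To exclude these jumps I would argue in the style of Lemma~\ref{neutral_neighbor}, comparing $\theta$-multiplicities across the three-term recurrence $\mu(G,x)=x\mu(G\setminus u,x)-\sum_{i\sim u}\mu(G\setminus ui,x)$ (Theorem~\ref{basic_property}(c)) and the analogous recurrence for $G\setminus v$, using $\theta\neq 0$ to read off that $x\mu(G\setminus u,x)$ has the same $\theta$-multiplicity as $\mu(G\setminus u,x)$. Neutrality of $u$ forces $\sum_{i\sim u}\mu(G\setminus ui,x)$ to have $\theta$-multiplicity at least $m$, which constrains the types of the neighbours of $u$; matching this against the constraint produced by the assumed jump in $G\setminus v$ should yield a contradiction, and I would reinforce it with Godsil's positive-deletion lemma (Lemma~\ref{godsil_positive}) and Corollary~\ref{delete-essential-positive} applied inside $G\setminus v$. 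The delicate case is $u\not\sim v$, since then $v$ does not appear in the recurrence at $u$; here I would join $u$ to $v$ by a path and invoke path interlacing (Lemma~\ref{path_interlacing}) together with the existence of a $\theta$-essential vertex (Lemma~\ref{existence_essential}). Excluding these two jumps is exactly the main obstacle: interlacing plus the symmetric identity give everything except the non-jump property, and establishing it — a careful multiplicity count on the recurrence with an adjacency case split — is where the real work lies.
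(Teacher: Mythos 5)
A preliminary remark: the paper does not prove Lemma \ref{Ku-Chen-neutral} at all; it is imported verbatim from \cite[Proposition 2.9]{KC}, so your attempt can only be judged on its own terms and against the argument in that reference. Your bookkeeping is correct as far as it goes: with $m=\m(\theta,G)$ and $\m(\theta,G\setminus u)=m$, the identity $\alpha=\beta+\gamma$ plus interlacing (Lemma \ref{interlacing}) does prove part (a) outright, correctly identifies the single forbidden value in each of parts (b) and (c) (namely $\m(\theta,G\setminus uv)=m$ when $v$ is $\theta$-essential, and $\m(\theta,G\setminus uv)=m-1$ when $v$ is $\theta$-neutral), and the observation that the lemma is vacuous for $\theta=0$ is right.

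The genuine gap is that excluding those two values \emph{is} the content of parts (b) and (c), and your proposal does not do it. The route you sketch, via the recurrence $\mu(G,x)=x\mu(G\setminus u,x)-\sum_{i\sim u}\mu(G\setminus ui,x)$ with a case split on whether $u\sim v$, is unlikely to close: unlike in Lemma \ref{neutral_neighbor}, where a uniform lower bound on every summand bounds the whole sum, here you would need to pass from a multiplicity bound on the \emph{sum} $\sum_{i\sim u}\mu(G\setminus ui,x)$ back to information about individual terms, and that sum admits cancellation; worse, $v$ need not be a neighbour of $u$ and then does not appear in the recurrence at all. The missing idea is a parity argument on the Heilmann--Lieb identity already used in the paper's proof of Lemma \ref{nonpositive_path}: the right-hand side $\sum_{P\in\mathbb{P}(u,v)}\mu(G\setminus P,x)^2$ is a sum of squares of real polynomials, so $\theta$ occurs in it with \emph{even} multiplicity (factor out the minimal power $(x-\theta)^{2t}$; the remaining factor is a positive number at $x=\theta$). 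Under the forbidden assumption, $\m\bigl(\theta,\mu(G\setminus u,x)\mu(G\setminus v,x)\bigr)$ and $\m\bigl(\theta,\mu(G,x)\mu(G\setminus uv,x)\bigr)$ equal $2m-1$ and $2m$ in case (b), and $2m$ and $2m-1$ in case (c); in either case the left-hand side has $\theta$-multiplicity exactly $2m-1$, which is odd, a contradiction. Replacing your third paragraph by this argument completes the proof; as written, the proposal establishes only part (a).
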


\begin {thm}\label {BP:T3a}  Let $G$ be a graph. Let $u\in A_{\theta}(G)$ and $v\in V(G)$ where $(u,v)\notin E(G)$. Let $G'$ be the graph with $V(G')=V(G)$ and $E(G')=E(G)\cup \{(u,v)\}$. Then $\textnormal {mult} (\theta,G')=\textnormal {mult} (\theta,G)$ and
\begin {itemize}
\item [(a)] $B_{\theta}(G')=B_{\theta}(G)$,
\item [(b)] $P_{\theta}(G')=P_{\theta}(G)$,
\item [(c)] $N_{\theta}(G')=N_{\theta}(G)$,
\item [(d)] $A_{\theta}(G')=A_{\theta}(G)$.
\end {itemize}
\end {thm}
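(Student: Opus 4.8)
The plan is to isolate two numerical facts and then let the $\theta$-Stability Lemma do the bookkeeping. The pivotal observation is that $G'\setminus u=G\setminus u$: the only new edge $e=(u,v)$ is incident to $u$, so it vanishes once $u$ is deleted. Consequently, if I can establish (I) $\m(\theta,G')=\m(\theta,G)$ and (II) $u\in A_{\theta}(G')$, then $u$ is $\theta$-special in both graphs and Theorem~\ref{P:T5} applies to $u$ in each. Running it through the common graph $G\setminus u=G'\setminus u$ gives $B_{\theta}(G')=B_{\theta}(G'\setminus u)=B_{\theta}(G\setminus u)=B_{\theta}(G)$, and identically for $P_{\theta}$ and $N_{\theta}$; for the special vertices the same chain yields $A_{\theta}(G')\setminus\{u\}=A_{\theta}(G)\setminus\{u\}$, and since $u$ lies in both sets, $A_{\theta}(G')=A_{\theta}(G)$. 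Thus the whole statement follows from (I) and (II).

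Both reduce to one workhorse claim, proved separately: \emph{if $w$ is $\theta$-positive in a graph $H$ and $v\notin N_{H}(w)$, then $\m(\theta,H+(w,v))=\m(\theta,H)$.} Note this requires only positivity, not speciality, so there is no circularity. Granting it, (I) is the claim applied to $G$ itself, since a $\theta$-special vertex is $\theta$-positive (\cite[Corollary 4.3]{G}). For (II), once (I) holds we have $\m(\theta,G'\setminus u)=\m(\theta,G\setminus u)=\m(\theta,G)+1=\m(\theta,G')+1$, so $u$ is $\theta$-positive in $G'$; and if $w_{0}$ is a $\theta$-essential neighbour of $u$ in $G$ (one exists, as $u$ is $\theta$-special), then $w_{0}\ne u,v$, so $G'\setminus w_{0}=(G\setminus w_{0})+(u,v)$, with $u$ still $\theta$-positive in $G\setminus w_{0}$ by Corollary~\ref{delete-essential-positive}. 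The workhorse claim then gives $\m(\theta,G'\setminus w_{0})=\m(\theta,G\setminus w_{0})=\m(\theta,G)-1$, so $w_{0}$ remains $\theta$-essential in $G'$ and is adjacent to $u$; hence $u\in A_{\theta}(G')$.

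To prove the workhorse claim, set $H'=H+(w,v)$ and apply Theorem~\ref{basic_property}(b) to $e=(w,v)$: since $H'-e=H$ and $H'\setminus wv=H\setminus wv$, one gets $\mu(H',x)=\mu(H,x)-\mu(H\setminus wv,x)$. Write $k=\m(\theta,H)$; as $w$ is $\theta$-positive, $\m(\theta,H\setminus w)=k+1$, and the type of $v$ in $H\setminus w$ forces $\m(\theta,H\setminus wv)\in\{k,k+1,k+2\}$. When $\m(\theta,H\setminus wv)>k$ the subtracted polynomial vanishes to higher order at $\theta$, so $\m(\theta,H')=k$ at once. The one delicate case is $v$ being $\theta$-essential in $H\setminus w$, giving $\m(\theta,H\setminus wv)=k$ and a possible cancellation of leading coefficients; ruling this out is the crux. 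For a matching polynomial $F$ let $\mathrm{lc}_{\theta}(F)$ denote $F(x)/(x-\theta)^{\m(\theta,F)}$ evaluated at $\theta$. I intend to compare signs along $H\to H\setminus w\to H\setminus wv$ using Godsil's positivity of the ratio $\mu(H\setminus w,x)/\mu(H,x)$ (a partial fraction $\sum_{j} a_{j}/(x-\rho_{j})$ with every $a_{j}>0$, available from \cite{G0,G}): the vanishing-and-monotone behaviour of this ratio at the root $\theta$ forces $\mathrm{sign}\,\mathrm{lc}_{\theta}(H\setminus w)=-\,\mathrm{sign}\,\mathrm{lc}_{\theta}(H)$ for the $\theta$-positive deletion, while the positivity of the simple-pole residue of $\mu(H\setminus wv,x)/\mu(H\setminus w,x)$ at $\theta$ forces $\mathrm{sign}\,\mathrm{lc}_{\theta}(H\setminus wv)=\mathrm{sign}\,\mathrm{lc}_{\theta}(H\setminus w)$ for the $\theta$-essential deletion. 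Combining, $\mathrm{lc}_{\theta}(H)$ and $\mathrm{lc}_{\theta}(H\setminus wv)$ have opposite signs, so their difference is nonzero and $\m(\theta,H')=k$. The main obstacle is precisely this sign-tracking step: the weak interlacing of Lemma~\ref{interlacing} leaves the parity ambiguous in the essential-deletion case, and it is the positivity of the Godsil ratio that must be invoked to close the argument uniformly in $\theta$ (including $\theta=0$).
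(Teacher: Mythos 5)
Your proof is correct, and it reaches the two key facts --- $\m(\theta,G')=\m(\theta,G)$ and $u\in A_{\theta}(G')$ --- by a genuinely different route before finishing with the same Stability-Lemma bookkeeping through $G\setminus u=G'\setminus u$ that the paper uses. The paper splits into cases according to the Gallai--Edmonds type of $v$: since $u$ is $\theta$-special, Theorem \ref{P:T5} pins down $\m(\theta,G\setminus uv)$ exactly, and the only dangerous case for the identity $\mu(G',x)=\mu(G,x)-\mu(G\setminus uv,x)$ (namely $v\in B_{\theta}(G)$, where both terms vanish to order $k$ and could cancel) is killed by the elementary observation that $G'\setminus v=G\setminus v$ has multiplicity $k-1$, so interlacing caps $\m(\theta,G')$ at $k$; the membership $u\in A_{\theta}(G')$ is then tracked case by case, using Lemma \ref{Ku-Chen-neutral} when $v$ is neutral. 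You instead prove a single ``workhorse'' lemma requiring only that the endpoint be $\theta$-positive, which handles $v$ of every type uniformly and also lets you certify $u\in A_{\theta}(G')$ by applying it to $G\setminus w_{0}$ (where $u$ need no longer be special, so the paper's case analysis would not transfer directly). The price is that your dangerous case --- $v$ essential in $H\setminus w$, where cancellation of the order-$k$ leading coefficients must be excluded --- cannot be dispatched by the $G'\setminus v=G\setminus v$ trick, since $v$ essential in $H\setminus w$ does not force $v$ essential in $H$; you close it with the sign argument based on the positivity of the residues in the partial-fraction expansion of $\mu(H\setminus w,x)/\mu(H,x)$. That sign computation is correct as you set it up (positive deletion flips the sign of the reduced value at $\theta$, essential deletion preserves it, so $\mu(H,x)$ and $\mu(H\setminus wv,x)$ cannot cancel at order $k$), and the residue positivity is exactly the content of Godsil's proof of the interlacing result the paper already cites, so it is a legitimate, if heavier, tool. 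In short: the paper's proof stays inside its elementary toolkit at the cost of a three-way case split; yours trades that for a stronger, reusable lemma whose one delicate case needs the analytic positivity machinery. Do make the residue-positivity citation precise when writing this up, since it is the only ingredient not already stated explicitly in the paper.
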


\begin {proof} Let $\textnormal {mult} (\theta,G)=k$. Then $\textnormal {mult} (\theta,G\setminus u)=k+1$. Also by part (b) of Theorem \ref {basic_property},
\begin{equation}
\mu (G',x)=\mu (G,x)-\mu (G\setminus uv,x),\tag {1}
\end{equation}

\noindent
{\bf Case 1.} Suppose $v\in B_{\theta}(G)$. Then by part (i) of Theorem \ref {P:T5}, $\textnormal {mult} (\theta,G\setminus uv)=k$.
We first show that $\textnormal {mult} (\theta,G')=\textnormal {mult} (\theta,G)$.  By comparing the multiplicity of $\theta$ as zero on both sides of the equation in (1), we deduce that $\textnormal {mult} (\theta,G')\geq k$. Note that $\mu (G'\setminus v,x)=\mu (G\setminus v,x)$. So $\textnormal {mult} (\theta,G'\setminus v)=k-1$. By Lemma \ref {interlacing}, $\textnormal {mult} (\theta,G')=k$.

Now we show that $u\in A_{\theta}(G')$. Since $\textnormal {mult} (\theta,G'\setminus v)=k-1$, $v\in B_{\theta}(G')$. On the other hand,   $\mu (G'\setminus u,x)=\mu (G\setminus u,x)$. So $\textnormal {mult} (\theta,G'\setminus u)=k+1$ and $u\in A_{\theta}(G')$, for $u$ is adjacent to $v$. By part (i) of Theorem \ref {P:T5}, we have $B_{\theta}(G')=B_{\theta}(G'\setminus u)=B_{\theta}(G\setminus u)=B_{\theta}(G)$. The proof of part (a) is complete. Now part (b), (c) and (d) follow easily from part (ii), (iii) and (iv) of Theorem \ref {P:T5}.

\noindent
{\bf Case 2.} Suppose $v\in N_{\theta}(G)$. Then by part (iii) of Theorem \ref {P:T5}, $\textnormal {mult} (\theta,G\setminus uv)=k+1$. Using (1) again, we deduce that  $\textnormal {mult} (\theta,G')=k=\textnormal {mult} (\theta,G)$.

Now we show that $u\in A_{\theta}(G')$. Since $u\in A_{\theta}(G)$, $u$ is adjacent to a $\theta$-essential vertex $w$ in $G$.  By part (i) of of Theorem \ref {P:T5}, $w\in B_{\theta}(G\setminus u)$. Recall that $v\in N_{\theta}(G\setminus u)$. So, by part (a) of Lemma \ref{Ku-Chen-neutral}, $w\in B_{\theta}(G\setminus uv)$. Therefore  $\textnormal {mult} (\theta,G\setminus uvw)=k$. Since $\textnormal {mult} (\theta,G\setminus w)=k-1$, we deduce from $\mu (G'\setminus w,x)=\mu (G\setminus w,x)-\mu (G\setminus uvw,x)$ (part (b) of Theorem \ref {basic_property}) that $\textnormal {mult} (\theta,G'\setminus w)=k-1$. Hence $u\in A_{\theta}(G')$. As before part (a), (b), (c) and (d) follow easily from part (i), (ii), (iii) and (iv) of Theorem \ref {P:T5}.

\noindent
{\bf Case 3.} The case when $v\in A_{\theta}(G)\cup P_{\theta}(G)$ is proved similarly.
\end {proof}

Note that when $A_{\theta}(G)=\varnothing$, $S_{\theta}(G)=G$. Now by repeatedly applying Theorem \ref {BP:T3a}, we have the following corollary.

\begin {cor}\label{similar_gallai_edmond}  Let $G$ be a graph.  Then $\textnormal {mult} (\theta,S_{\theta}(G))=\textnormal {mult} (\theta,G)$
\begin {itemize}
\item [(a)] $B_{\theta}(S_{\theta}(G))=B_{\theta}(G)$,
\item [(b)] $P_{\theta}(S_{\theta}(G))=P_{\theta}(G)$,
\item [(c)] $N_{\theta}(S_{\theta}(G))=N_{\theta}(G)$,
\item [(d)] $A_{\theta}(S_{\theta}(G))=A_{\theta}(G)$.
\end {itemize}
\end {cor}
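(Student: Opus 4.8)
The plan is to realize $S_{\theta}(G)$ as the terminal graph of a chain in which each graph is obtained from its predecessor by inserting a single edge of exactly the type treated in Theorem \ref{BP:T3a}, and then to propagate the conclusion of that theorem along the chain by induction. First I would enumerate the edges that must be added to pass from $G$ to $S_{\theta}(G)$: these are precisely the pairs $(u,z)$ with $u\in A_{\theta}(G)$, $z\in V(G)$, and $(u,z)\notin E(G)$. List them without repetition as $e_1,\dots,e_m$, writing $e_i=(u_i,v_i)$ with $u_i\in A_{\theta}(G)$. Setting $G_0=G$ and letting $G_i$ be the graph obtained from $G_{i-1}$ by adjoining the single edge $e_i$, I obtain the chain $G=G_0,G_1,\dots,G_m=S_{\theta}(G)$.

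The invariant I would carry through the induction is that $\m(\theta,G_i)=\m(\theta,G)$ together with $B_{\theta}(G_i)=B_{\theta}(G)$, $P_{\theta}(G_i)=P_{\theta}(G)$, $N_{\theta}(G_i)=N_{\theta}(G)$, and $A_{\theta}(G_i)=A_{\theta}(G)$. For the inductive step from $G_{i-1}$ to $G_i$, I must check the two hypotheses of Theorem \ref{BP:T3a} for the edge $e_i$ inside $G_{i-1}$. The condition $(u_i,v_i)\notin E(G_{i-1})$ holds because the $e_j$ are listed without repetition and adjoining an edge creates no other edges, so $e_i$ is genuinely absent from $G_{i-1}$. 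The condition $u_i\in A_{\theta}(G_{i-1})$ follows at once from the inductive hypothesis $A_{\theta}(G_{i-1})=A_{\theta}(G)$, since $u_i\in A_{\theta}(G)$ by construction. Theorem \ref{BP:T3a} then delivers $\m(\theta,G_i)=\m(\theta,G_{i-1})$ along with the four equalities $B_{\theta}(G_i)=B_{\theta}(G_{i-1})$, $P_{\theta}(G_i)=P_{\theta}(G_{i-1})$, $N_{\theta}(G_i)=N_{\theta}(G_{i-1})$, $A_{\theta}(G_i)=A_{\theta}(G_{i-1})$, and composing these with the inductive hypothesis advances all five invariants from $G_{i-1}$ to $G_i$. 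Taking $i=m$ yields the corollary, as $G_m=S_{\theta}(G)$.

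I expect the only point requiring care, and hence the main (if mild) obstacle, to be the bookkeeping that keeps the hypotheses of Theorem \ref{BP:T3a} valid at every stage. The decisive ingredient is part (d) of that theorem: because the set of $\theta$-special vertices is preserved by each admissible edge insertion, every $u_i$ remains $\theta$-special in the current graph $G_{i-1}$, which is exactly what licenses the next application of the theorem. Were this stability to fail, some $u_i$ could lose its special status after earlier insertions and the chain argument would collapse; since part (d) supplies precisely this invariance, the induction proceeds without further difficulty. The degenerate case $A_{\theta}(G)=\varnothing$, in which $m=0$ and $S_{\theta}(G)=G$, is immediate.
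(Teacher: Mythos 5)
Your proposal is correct and is essentially the paper's own argument: the paper proves the corollary precisely ``by repeatedly applying Theorem \ref{BP:T3a},'' and your explicit chain $G=G_0,\dots,G_m=S_{\theta}(G)$ with the invariance of $A_{\theta}$ (part (d)) licensing each successive edge insertion is exactly the bookkeeping the paper leaves implicit.
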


Two graphs $G$ and $G'$ are said to have the \emph{same Gallai-Edmonds decomposition} with respect to $\theta$, if there is a bijection, $\psi :V(G)\rightarrow V(G')$ such that $\psi(A_{\theta}(G))=A_{\theta}(G')$ and the restriction of $\psi$ to $G\setminus A_{\theta}(G)$ is an isomorphism onto $G'\setminus A_{\theta}(G')$.

Corollary \ref{similar_gallai_edmond} asserts that the Gallai-Edmonds decomposition of $G$ is stable under the $S_{\theta}$-operator. Since $G\setminus A_{\theta}(G)=S_{\theta}(G)\setminus A_{\theta}(S_{\theta}(G))$, we conclude that $G$ and $S_{\theta}(G)$ have the same Gallai-Edmonds decomposition with respect to $\theta$ and this proves Theorem \ref{Main03}. Corollary \ref{similar_gallai_edmond} also allows us, for the rest of this section, to predict the multiplicity of $\theta$ upon deleting two vertices of $S_{\theta}(G)$ in terms of the Gallai-Edmonds decomposition (see Corollary \ref{S:C6} below).

\begin {lm}\label {S:L4a}  Let $G$ be a graph and $S\subseteq V(G)$ be a set for which each $s\in S$ is adjacent to every other vertices in $G$. Suppose $\textnormal {mult} (\theta, G\setminus v)\geq 1$ with $v\in V(G)\setminus S$ and $s\notin B_{\theta}(G\setminus v)$ for all $s\in S$.
Then $S$ is an $\theta$-extreme set in $G\setminus uv$ for all $u\in V(G)\setminus S$, $u\neq v$.
\end {lm}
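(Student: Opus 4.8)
The plan is to recognise the vertices of $S$ as $\theta$-special vertices of $H:=G\setminus v$ and then to exploit the strong control that the $\theta$-Stability Lemma (Theorem \ref{P:T5}) gives over the Gallai--Edmonds partition when such vertices are deleted. The guiding observation is that deleting a $\theta$-special vertex leaves the classes $B_{\theta},P_{\theta},N_{\theta}$ untouched and removes only that vertex from $A_{\theta}$; hence deleting all of $S$ does not disturb the type of any surviving vertex, so the effect of subsequently deleting $u$ is the same whether or not $S$ has already been removed. This reduces the whole statement to a bookkeeping identity on multiplicities.

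First I would fix $u\in V(G)\setminus S$ with $u\neq v$ and set $H=G\setminus v$, noting that $G\setminus uv=H\setminus u$ and that, since $v\notin S$ and $s\neq v$, each $s\in S$ remains adjacent to every other vertex of $H$. Because $\m(\theta,H)\ge 1$, Lemma \ref{existence_essential} supplies a $\theta$-essential vertex $w$ of $H$; as $s$ is universal it is adjacent to $w$, and by hypothesis $s\notin B_{\theta}(H)$, so $s$ is not $\theta$-essential yet is adjacent to a $\theta$-essential vertex. Thus each $s\in S$ is $\theta$-special, i.e. $S\subseteq A_{\theta}(H)$ (and in particular each $s$ is $\theta$-positive).

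Next I would delete the members of $S$ one at a time, applying the $\theta$-Stability Lemma at each stage. Writing $S=\{s_1,\dots,s_t\}$, part (iv) of Theorem \ref{P:T5} gives $s_{i+1}\in A_{\theta}(H)\setminus\{s_1,\dots,s_i\}=A_{\theta}(H\setminus s_1\cdots s_i)$, so the vertex removed at every step is again $\theta$-special; consequently $\m(\theta,H\setminus S)=\m(\theta,H)+|S|$, while parts (i)--(iv) yield $B_{\theta}(H\setminus S)=B_{\theta}(H)$, $P_{\theta}(H\setminus S)=P_{\theta}(H)$, $N_{\theta}(H\setminus S)=N_{\theta}(H)$ and $A_{\theta}(H\setminus S)=A_{\theta}(H)\setminus S$. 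Since $u\notin S$, these identities show that $u$ lies in the same class of the partition $\{B_{\theta},A_{\theta},N_{\theta},P_{\theta}\}$ in $H$ as in $H\setminus S$. Hence the number $\delta\in\{-1,0,1\}$ by which deleting $u$ changes $\m(\theta,\cdot)$ is the same for $H$ and for $H\setminus S$, and because $(H\setminus u)\setminus S=(H\setminus S)\setminus u$ I would conclude
\[ \m(\theta,(G\setminus uv)\setminus S)=\m(\theta,(H\setminus S)\setminus u)=\m(\theta,H\setminus S)+\delta=\big(\m(\theta,H)+|S|\big)+\delta=\m(\theta,H\setminus u)+|S|, \]
which is exactly the assertion that $S$ is a $\theta$-extreme set in $G\setminus uv$.

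I expect the main obstacle to be the claim, in the third step, that $u$ keeps its type, rather than any particular calculation. A naive attempt would delete the ($\theta$-positive) vertices of $S$ using the positive-deletion results (Lemma \ref{godsil_positive}, Corollary \ref{delete-essential-positive}) together with interlacing, but these give only one-sided conclusions---for instance a $\theta$-neutral $u$ is merely guaranteed to be $\theta$-essential \emph{or} $\theta$-neutral after the deletions---leaving an ambiguous case that multiplicity inequalities alone cannot eliminate. The point that unblocks this is that the members of $S$ are not just $\theta$-positive but genuinely $\theta$-special, so that the full force of the $\theta$-Stability Lemma (an \emph{equality} of partition classes, not an inequality of multiplicities) is available; this is precisely where the universality hypothesis enters, via the existence of a $\theta$-essential neighbour. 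A minor point to record carefully is that the hypothesis $\m(\theta,G\setminus v)\ge 1$ is used exactly to guarantee, through Lemma \ref{existence_essential}, that such an essential neighbour exists.
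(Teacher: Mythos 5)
Your proposal is correct and follows essentially the same route as the paper: identify $S$ as a subset of $A_{\theta}(G\setminus v)$ via Lemma \ref{existence_essential} and the universality of the vertices of $S$, then invoke the $\theta$-Stability Lemma (Theorem \ref{P:T5}) to see that deleting $S$ preserves the Gallai--Edmonds class of $u$ and raises the multiplicity by exactly $|S|$, so that deleting $u$ changes the multiplicity by the same amount before and after removing $S$. The only cosmetic difference is that the paper runs a short case analysis over the class of $u$ while you package it into a single increment $\delta$.
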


\begin {proof} If $S=\varnothing$, we are done. Suppose $S\neq \varnothing$. By Lemma \ref{existence_essential}, $B_{\theta}(G\setminus v)\neq \varnothing$. Since  $s$ is adjacent to every other vertices in $G$ and  $s\notin B_{\theta}(G\setminus v)$, $s\in A_{\theta}(G\setminus v)$.
Hence $S\subseteq A_{\theta}(G\setminus v)$ and by part (iv) of Theorem \ref {P:T5},
\begin {equation}
\textnormal {mult} (\theta, (G\setminus v)\setminus S)=\textnormal {mult} (\theta, G\setminus v)+\vert S\vert.\notag
\end {equation}

Suppose $u\in B_{\theta}(G\setminus v)$. Then $\textnormal {mult} (\theta, G\setminus uv)=\textnormal {mult} (\theta, G\setminus v)-1$.  By part (i) of Theorem \ref {P:T5}, $u\in B_{\theta}((G\setminus v)\setminus S)$. So $\textnormal {mult} (\theta, (G\setminus uv)\setminus S)=\textnormal {mult} (\theta, G\setminus v)+\vert S\vert-1=\textnormal {mult} (\theta, G\setminus uv)+\vert S\vert$ and $S$ is an $\theta$-extreme set in $G\setminus uv$.

The case $u\in A_{\theta}(G\setminus v)\cup N_{\theta}(G\setminus v)\cup P_{\theta}(G\setminus v)$ is proved similarly.
\end {proof}

\begin {thm}\label{extreme_set_in_S}  Let $G$ be a graph. Let $H_1,\dots, H_q,Q_1,\dots, Q_m$ be all the components in $G\setminus A_{\theta}(G)$ with $H_i$ is $\theta$-critical for all $i$ and $\textnormal {mult} (\theta, Q_j)=0$ for all $j$.
 Suppose
\begin {itemize}
\item [(a)] $u\in V(G)\setminus A_{\theta}(G)$ and $v\in V(Q_{j_0})$ for some $j_0$, or
\item [(b)] $u,v\in V(H_{i_0})$ for some $i_0$, or
\item [(c)]  $\textnormal {mult} (\theta, G)\geq 2$, $u\in V(H_{i_1})$ and $v\in V(H_{i_2})$ for some $i_1,i_2$ and $i_1\neq i_2$.
\end {itemize}
Then $A_{\theta}(G)$ is an $\theta$-extreme set in $S_{\theta}(G)\setminus uv$.
\end {thm}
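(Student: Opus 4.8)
The plan is to apply Lemma \ref{S:L4a} to the graph $S_{\theta}(G)$ with the distinguished set $S=A_{\theta}(G)$. By Corollary \ref{similar_gallai_edmond} we have $A_{\theta}(S_{\theta}(G))=A_{\theta}(G)$, and by construction every $s\in A_{\theta}(G)$ is adjacent to all other vertices of $S_{\theta}(G)$, so the ``universal'' hypothesis of Lemma \ref{S:L4a} holds automatically; moreover in all three cases $u,v\in V(G)\setminus A_{\theta}(G)=V(S_{\theta}(G))\setminus S$. If $A_{\theta}(G)=\varnothing$ the conclusion is vacuous, so I may assume $A_{\theta}(G)\neq\varnothing$, which forces $\m(\theta,G)\ge 1$ (a $\theta$-special vertex is adjacent to a $\theta$-essential one, and the latter requires $\m(\theta,G)\ge 1$). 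It then remains, for a suitably chosen distinguished vertex $w\in\{u,v\}$, to verify the two hypotheses of Lemma \ref{S:L4a}: (i) $\m(\theta,S_{\theta}(G)\setminus w)\ge 1$, and (ii) $s\notin B_{\theta}(S_{\theta}(G)\setminus w)$ for every $s\in A_{\theta}(G)$; the lemma then delivers that $A_{\theta}(G)$ is $\theta$-extreme in $S_{\theta}(G)\setminus uv$.

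I would dispatch (ii) uniformly via the $\theta$-Stability Lemma (Theorem \ref{P:T5}) and the commutativity of vertex deletion. Fix $s\in A_{\theta}(G)=A_{\theta}(S_{\theta}(G))$. Deleting $s$ first gives $\m(\theta,S_{\theta}(G)\setminus s)=\m(\theta,G)+1$ and, by Theorem \ref{P:T5}, preserves the essential/neutral/positive type of every other vertex; deleting $w$ afterwards and comparing with $\m(\theta,S_{\theta}(G)\setminus w)$ shows, in each of the three type-cases for $w$, that $\m(\theta,S_{\theta}(G)\setminus ws)=\m(\theta,S_{\theta}(G)\setminus w)+1$. Hence $s$ is actually $\theta$-positive, so in particular not $\theta$-essential, in $S_{\theta}(G)\setminus w$, which is exactly (ii).

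For (i) I would pick the distinguished vertex by case. In case (a), take $w=v\in V(Q_{j_0})$; since $v$ is $\theta$-neutral or $\theta$-positive (its component has multiplicity $0$), $\m(\theta,S_{\theta}(G)\setminus v)\ge\m(\theta,G)\ge 1$. In case (c), take $w=v\in V(H_{i_2})$, which is $\theta$-essential, so $\m(\theta,S_{\theta}(G)\setminus v)=\m(\theta,G)-1\ge 1$ exactly because $\m(\theta,G)\ge 2$; the same choice works in case (b) as soon as $\m(\theta,G)\ge 2$. The main obstacle is case (b) with $\m(\theta,G)=1$: both $u$ and $v$ are then $\theta$-essential, deleting either drops the multiplicity to $0$, and Lemma \ref{S:L4a} no longer applies.

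To close this residual case (and, pleasingly, to obtain one uniform argument for the whole theorem) I would compute $\m(\theta,S_{\theta}(G)\setminus uv)$ directly. Since $u,v\notin A_{\theta}(G)$ and each vertex of $A_{\theta}(G)$ is universal, $S_{\theta}(G)\setminus uv$ is obtained from $F_{0}:=(G\setminus A_{\theta}(G))\setminus uv$ by adjoining the $|A_{\theta}(G)|$ vertices of $A_{\theta}(G)$ one at a time, each universal relative to the current graph. Using parts (c) and (d) of Theorem \ref{basic_property}, adjoining a universal vertex $a$ to a graph $F$ gives $\mu(F+a,x)=x\mu(F,x)-\mu'(F,x)$, where $\mu'$ is the derivative; writing $\mu(F,x)=(x-\theta)^{m}g(x)$ with $g(\theta)\neq 0$ and evaluating the bracketed factor at $\theta$ shows $\m(\theta,F+a)=m-1$ whenever $m=\m(\theta,F)\ge 1$. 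Thus each adjunction lowers the multiplicity by exactly $1$ as long as the running multiplicity stays $\ge 1$. By Corollary \ref{P:C7} the number of $\theta$-critical components of $G\setminus A_{\theta}(G)$ is $q=|A_{\theta}(G)|+\m(\theta,G)$, and deleting $u,v$ destroys at most one such component in cases (a) and (b) and at most two in case (c); hence $\m(\theta,F_{0})\ge q-1=|A_{\theta}(G)|+\m(\theta,G)-1\ge|A_{\theta}(G)|$ in cases (a),(b), while $\m(\theta,F_{0})\ge q-2\ge|A_{\theta}(G)|$ in case (c) because $\m(\theta,G)\ge 2$. In every case $\m(\theta,F_{0})\ge|A_{\theta}(G)|$, so the running multiplicity never falls below $1$ and we get $\m(\theta,S_{\theta}(G)\setminus uv)=\m(\theta,F_{0})-|A_{\theta}(G)|$. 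As $(S_{\theta}(G)\setminus uv)\setminus A_{\theta}(G)=F_{0}$, this is precisely the assertion that $A_{\theta}(G)$ is a $\theta$-extreme set in $S_{\theta}(G)\setminus uv$, and it simultaneously reveals that the three case-hypotheses are exactly the conditions guaranteeing $\m(\theta,F_{0})\ge|A_{\theta}(G)|$.
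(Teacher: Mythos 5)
Your proof is correct, and while its first half mirrors the paper, the way you close the argument is genuinely different. The paper likewise reduces the theorem to Lemma \ref{S:L4a} applied with $S=A_{\theta}(G)=A_{\theta}(S_{\theta}(G))$, verifying the hypotheses through the $\theta$-Stability Lemma in case (a) and in cases (b), (c) when $\m(\theta,G)\geq 2$; but for the residual case (b) with $\m(\theta,G)=1$ --- exactly the obstacle you identify --- it argues ad hoc: it splits on $\m(\theta,S_{\theta}(G)\setminus uv)\in\{0,1\}$, plays interlacing against the component identity $\m(\theta,(S_{\theta}(G)\setminus A_{\theta}(S_{\theta}(G)))\setminus uv)=\m(\theta,H_{i_0}\setminus uv)+\vert A_{\theta}(S_{\theta}(G))\vert$, and in the second subcase shows each $w\in A_{\theta}(S_{\theta}(G))$ remains non-essential, hence $\theta$-special, in $S_{\theta}(G)\setminus uv$, so that stability applies. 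You instead close the residual case (and, as you note, could prove the whole theorem this way, making the Lemma \ref{S:L4a} scaffolding dispensable) via the cone identity $\mu(F+a,x)=x\mu(F,x)-\tfrac{d}{dx}\mu(F,x)$ for a universal vertex $a$, which is immediate from parts (c) and (d) of Theorem \ref{basic_property}; writing $\mu(F,x)=(x-\theta)^{m}g(x)$ with $g(\theta)\neq 0$ indeed gives $\m(\theta,F+a)=m-1$ whenever $m\geq 1$, so each universal adjunction lowers the multiplicity by exactly one while it stays positive. Combined with Corollary \ref{P:C7}(ii) (there are $q=\vert A_{\theta}(G)\vert+\m(\theta,G)$ critical components, each of multiplicity $1$ by the $\theta$-Gallai Lemma) and additivity over components, hypotheses (a)--(c) each force $\m(\theta,F_{0})\geq\vert A_{\theta}(G)\vert$, whence $\m(\theta,S_{\theta}(G)\setminus uv)=\m(\theta,F_{0})-\vert A_{\theta}(G)\vert$, which is precisely the extremality claim; I checked the bookkeeping (running multiplicity $M_{0}-i\geq 1$ before each of the $\vert A_{\theta}(G)\vert$ adjunctions requires exactly $M_{0}\geq\vert A_{\theta}(G)\vert$) and it is sound. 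Your uniform verification of the second hypothesis of Lemma \ref{S:L4a} --- deleting $s\in A_{\theta}(S_{\theta}(G))$ first, invoking Theorem \ref{P:T5}, and concluding that $s$ is $\theta$-positive in $S_{\theta}(G)\setminus w$ for every $w\notin A_{\theta}(G)$ --- is also tidier than the paper's case-by-case checks. In sum: the paper's route stays entirely inside the stability/interlacing toolkit it has already built, at the cost of an unilluminating terminal case analysis; your derivative identity is one small extra ingredient, but it buys a shorter, uniform computation that also explains the role of the hypotheses (a)--(c): they are exactly what guarantees $\m(\theta,F_{0})\geq\vert A_{\theta}(G)\vert$.
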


\begin {proof} By Corollary \ref {similar_gallai_edmond}, $A_{\theta}(G)=A_{\theta}(S_{\theta}(G))$. If $A_{\theta}(S_{\theta}(G))=\varnothing$, we are done. So we may assume $A_{\theta}(S_{\theta}(G))\neq \varnothing$. This also means that $\textnormal {mult} (\theta, S_{\theta}(G))\geq 1$.

Note that $S_{\theta}(G)\setminus A_{\theta}(S_{\theta}(G))=G\setminus A_{\theta}(G)$. So $H_1,\dots, H_q,Q_1,\dots, Q_m$ are all the components in $S_{\theta}(G)\setminus A_{\theta}(S_{\theta}(G))$ with $H_i$ is $\theta$-critical for all $i$ and $\textnormal {mult} (\theta, Q_j)=0$ for all $j$.

By Lemma \ref  {S:L4a}, it is sufficient to show that  $\textnormal {mult} (\theta, S_{\theta}(G)\setminus v)\geq 1$ and $w\notin B_{\theta}(S_{\theta}(G)\setminus v)$ for all $w\in A_{\theta}(S_{\theta}(G))$.

\noindent
(a) By  Theorem \ref {P:T5}, $v\in N_{\theta}(G)\cup P_{\theta}(G)$, and by Corollary \ref {similar_gallai_edmond}, $v\in N_{\theta}(S_{\theta}(G))\cup P_{\theta}(S_{\theta}(G))$. Therefore $\textnormal {mult} (\theta, S_{\theta}(G)\setminus v)\geq 1$. Let $w\in A_{\theta}(S_{\theta}(G))$. By Theorem \ref {P:T5}, $v\in N_{\theta}(S_{\theta}(G)\setminus w)\cup P_{\theta}(S_{\theta}(G)\setminus w)$. Therefore $\textnormal {mult} (\theta, S_{\theta}(G)\setminus wv)\geq \textnormal {mult} (\theta, S_{\theta}(G)\setminus w)\geq \textnormal {mult} (\theta, S_{\theta}(G)\setminus v)$. This implies that $w\notin B_{\theta}(S_{\theta}(G)\setminus v)$. Hence $w\notin B_{\theta}(S_{\theta}(G)\setminus v)$ for all $w\in A_{\theta}(S_{\theta}(G))$.

\noindent
(b) and (c). Suppose $\textnormal {mult} (\theta, G)\geq 2$ and $v\in V(H_{i_0})$ for some $i_0$. By  Theorem \ref {P:T5} and Corollary \ref {similar_gallai_edmond}, $v\in B_{\theta}(S_{\theta}(G))$ and $\textnormal {mult} (\theta, S_{\theta}(G))\geq 2$. Therefore $\textnormal {mult} (\theta, S_{\theta}(G)\setminus v)=\textnormal {mult} (\theta, S_{\theta}(G))-1\geq 1$.
Let $w\in A_{\theta}(S_{\theta}(G))$. By Theorem \ref {P:T5}, $v\in B_{\theta}(S_{\theta}(G)\setminus w)$. So $\textnormal {mult} (\theta, S_{\theta}(G)\setminus wv)=\textnormal {mult} (\theta, S_{\theta}(G)\setminus w)-1= \textnormal {mult} (\theta, S_{\theta}(G))>\textnormal {mult} (\theta, S_{\theta}(G)\setminus v)$. This implies that $w\notin B_{\theta}(S_{\theta}(G)\setminus u)$. Hence $w\notin B_{\theta}(S_{\theta}(G)\setminus v)$ for all $w\in A_{\theta}(S_{\theta}(G))$.

It is left only to show case (b) with  $\textnormal {mult} (\theta, G)=1$ and $v\in V(H_{i_0})$ for some $i_0$. Note that
$v\in B_{\theta}(S_{\theta}(G))$ and $\textnormal {mult} (\theta, S_{\theta}(G))=1$. Now $\textnormal {mult} (\theta, S_{\theta}(G)\setminus v)=0$. By Lemma \ref {interlacing}, $\textnormal {mult} (\theta, S_{\theta}(G)\setminus vu)=0$ or 1.

Suppose  $\textnormal {mult} (\theta, S_{\theta}(G)\setminus vu)=0$. By Lemma \ref {interlacing} again, $\textnormal {mult} (\theta, (S_{\theta}(G)\setminus vu)\setminus  A_{\theta}(S_{\theta}(G)))\leq \vert A_{\theta}(S_{\theta}(G))\vert$. On the other hand, by part (a) of Theorem \ref {basic_property} and Corollary \ref {P:C7},
\begin {equation}
\textnormal {mult} (\theta, (S_{\theta}(G)\setminus  A_{\theta}(S_{\theta}(G)))\setminus uv)=\textnormal {mult} (\theta, H_{i_0}\setminus uv)+q-1=\textnormal {mult} (\theta, H_{i_0}\setminus uv)+\vert A_{\theta}(S_{\theta}(G))\vert.\tag {2}
\end {equation}
Therefore $\textnormal {mult} (\theta, H_{i_0}\setminus uv)+\vert A_{\theta}(S_{\theta}(G))\vert\leq \vert A_{\theta}(S_{\theta}(G))\vert$ and $\textnormal {mult} (\theta, H_{i_0}\setminus uv)=0$. Hence $A_{\theta}(G)$ is an $\theta$-extreme set in $S_{\theta}(G)\setminus uv$.

Suppose  $\textnormal {mult} (\theta, S_{\theta}(G)\setminus vu)=1$. Let $w\in A_{\theta}(S_{\theta}(G))$. By Lemma \ref {interlacing},
\begin {equation}
\textnormal {mult} (\theta, (S_{\theta}(G)\setminus vuw)\setminus  (A_{\theta}(S_{\theta}(G))\setminus w))\leq \textnormal {mult} (\theta, S_{\theta}(G)\setminus vuw)+\vert A_{\theta}(S_{\theta}(G))\vert -1.\notag
\end {equation}

 On the other hand, (2) holds.
Therefore $\textnormal {mult} (\theta, H_{i_0}\setminus uv)+\vert A_{\theta}(S_{\theta}(G))\vert\leq \textnormal {mult} (\theta, S_{\theta}(G)\setminus vuw)+\vert A_{\theta}(S_{\theta}(G))\vert -1$ and $\textnormal {mult} (\theta, S_{\theta}(G)\setminus vuw)\geq 1=\textnormal {mult} (\theta, S_{\theta}(G)\setminus vu)$. So $w\notin B_{\theta}(S_{\theta}(G)\setminus uv)$. Since $w$ is adjacent to every other vertices in $S_{\theta}(G)$, $w\in A_{\theta}(S_{\theta}(G)\setminus uv)$. Hence $A_{\theta}(S_{\theta}(G))\subseteq A_{\theta}(S_{\theta}(G)\setminus uv)$ and  $A_{\theta}(G)$ is an $\theta$-extreme set in $S_{\theta}(G)\setminus uv$.
\end {proof}

\begin {cor}\label {S:C6}  Let $G$ be a graph. Let $H_1,\dots, H_q,Q_1,\dots, Q_m$ be all the components in $G\setminus A_{\theta}(G)$ with $H_i$ is $\theta$-critical for all $i$ and $\textnormal {mult} (\theta, Q_j)=0$ for all $j$.
Then the following holds:
\begin {itemize}
\item [(a)] If $u\in V(H_{i_0})$ and $v\in V(Q_{j_0})$ for some $i_0,j_0$, then
\begin {equation}
\textnormal {mult} (\theta, S_{\theta}(G)\setminus uv)=\textnormal {mult} (\theta, S_{\theta}(G))-1+\textnormal {mult} (\theta, Q_{j_0}\setminus v).\notag
\end {equation}
\item [(b)] If $u,v\in V(Q_{j_0})$ for some $j_0$, then
\begin {equation}
\textnormal {mult} (\theta, S_{\theta}(G)\setminus uv)=\textnormal {mult} (\theta, S_{\theta}(G))+\textnormal {mult} (\theta, Q_{j_0}\setminus uv).\notag
\end {equation}
\item [(c)] If $u\in V(Q_{j_1})$ and $v\in V(Q_{j_2})$ for some $j_1,j_2$, $j_1\neq j_2$, then
\begin {equation}
\textnormal {mult} (\theta, S_{\theta}(G)\setminus uv)=\textnormal {mult} (\theta, S_{\theta}(G))+\textnormal {mult} (\theta, Q_{j_1}\setminus u)+\textnormal {mult} (\theta, Q_{j_2}\setminus v).\notag
\end {equation}
\item [(d)] If $u,v\in V(H_{i_0})$ for some $i_0$, then
\begin {equation}
\textnormal {mult} (\theta, S_{\theta}(G)\setminus uv)=\textnormal {mult} (\theta, S_{\theta}(G))-1+\textnormal {mult} (\theta, H_{i_0}\setminus uv).\notag
\end {equation}
\item [(e)]  If $\textnormal {mult} (\theta, G)\geq 2$, $u\in V(H_{i_1})$ and $v\in V(H_{i_2})$ for some $i_1,i_2$, $i_1\neq i_2$, then
\begin {equation}
\textnormal {mult} (\theta, S_{\theta}(G)\setminus uv)=\textnormal {mult} (\theta, S_{\theta}(G))-2.\notag
\end {equation}
\end {itemize}
\end {cor}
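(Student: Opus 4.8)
The plan is to derive all five formulas uniformly from Theorem \ref{extreme_set_in_S}, reducing each to a bookkeeping computation over the components of $G\setminus A_{\theta}(G)$. The first observation is that in every case of the corollary the vertices $u,v$ lie in components of $G\setminus A_{\theta}(G)$, so $u,v\notin A_{\theta}(G)$, and the hypotheses match one of the three cases of Theorem \ref{extreme_set_in_S}: parts (a), (b), (c) of the corollary all fall under case (a) of that theorem (taking $v$, respectively, in some $Q_{j_0}$), part (d) under case (b), and part (e) under case (c). Hence in each instance Theorem \ref{extreme_set_in_S} guarantees that $A_{\theta}(G)$ is a $\theta$-extreme set in $S_{\theta}(G)\setminus uv$, which by definition means
\[ \m(\theta,(S_{\theta}(G)\setminus uv)\setminus A_{\theta}(G))=\m(\theta,S_{\theta}(G)\setminus uv)+\vert A_{\theta}(G)\vert. \]

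Next I would rewrite the left-hand side. Since $S_{\theta}(G)$ differs from $G$ only in edges incident to vertices of $A_{\theta}(G)=A_{\theta}(S_{\theta}(G))$ (Corollary \ref{similar_gallai_edmond}(d)), deleting $A_{\theta}(G)$ removes all such edges and gives $S_{\theta}(G)\setminus A_{\theta}(G)=G\setminus A_{\theta}(G)$; as $u,v\notin A_{\theta}(G)$, this yields $(S_{\theta}(G)\setminus uv)\setminus A_{\theta}(G)=(G\setminus A_{\theta}(G))\setminus uv$. Now $G\setminus A_{\theta}(G)$ is the disjoint union of $H_1,\dots,H_q,Q_1,\dots,Q_m$, so by multiplicativity of the matching polynomial (Theorem \ref{basic_property}(a)) the multiplicity of $\theta$ in $(G\setminus A_{\theta}(G))\setminus uv$ is the sum, over these components, of the multiplicities of $\theta$ after removing whichever of $u,v$ each contains. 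The only facts needed beyond this are that $\m(\theta,H_i)=1$ for each $\theta$-critical $H_i$ (the $\theta$-Gallai Lemma, Theorem \ref{P:T6}), that $\m(\theta,H_i\setminus u)=0$ for any $u\in V(H_i)$ since every vertex of a $\theta$-critical component is $\theta$-essential, and that $\m(\theta,Q_j)=0$.

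Finally I would assemble the arithmetic. Writing $\m(\theta,(G\setminus A_{\theta}(G))\setminus uv)$ as the above component sum, the baseline $\sum_i\m(\theta,H_i)+\sum_j\m(\theta,Q_j)=q$ is corrected only in the one or two components actually meeting $\{u,v\}$: an $H$-component losing a vertex changes its contribution from $1$ to $0$ (a correction of $-1$), a $Q$-component losing $v$ changes its contribution from $0$ to $\m(\theta,Q\setminus v)$, and a single $Q$-component losing both $u,v$ contributes $\m(\theta,Q\setminus uv)$. Feeding each case's corrections into the extreme-set relation and substituting the identities $q=\vert A_{\theta}(G)\vert+\m(\theta,G)$ (Corollary \ref{P:C7}(ii)) and $\m(\theta,S_{\theta}(G))=\m(\theta,G)$ (Corollary \ref{similar_gallai_edmond}) makes $\vert A_{\theta}(G)\vert$ cancel and produces precisely the stated formula; for example in (e) both $H_{i_1},H_{i_2}$ drop to $0$, giving baseline $q-2$ and hence $\m(\theta,S_{\theta}(G))-2$, while in (d) the sole $H_{i_0}$ loses both vertices and contributes $\m(\theta,H_{i_0}\setminus uv)$ against the remaining $q-1$, giving $\m(\theta,S_{\theta}(G))-1+\m(\theta,H_{i_0}\setminus uv)$. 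The argument presents no serious obstacle, since its substance resides entirely in Theorem \ref{extreme_set_in_S}; the only points demanding care are matching the corollary's five cases to the theorem's three and correctly tracking which components are depleted of a vertex in each case.
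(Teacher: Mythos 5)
Your proposal is correct and follows essentially the same route as the paper: invoke Theorem \ref{extreme_set_in_S} to get that $A_{\theta}(G)$ is $\theta$-extreme in $S_{\theta}(G)\setminus uv$, identify $(S_{\theta}(G)\setminus uv)\setminus A_{\theta}(G)$ with $(G\setminus A_{\theta}(G))\setminus uv$, and compute the multiplicity componentwise via Theorem \ref{basic_property}(a), the $\theta$-Gallai Lemma, and $q=\vert A_{\theta}(G)\vert+\textnormal{mult}(\theta,G)$. The paper carries this out explicitly only for case (a) and declares the rest similar, so your uniform bookkeeping over the five cases is exactly the intended argument.
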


\begin {proof} (a) Suppose $u\in V(H_{i_0})$ and $v\in V(Q_{j_0})$ for some $i_0,j_0$. By  Theorem \ref {extreme_set_in_S}, $A_{\theta}(G)$ is an $\theta$-extreme set in $S_{\theta}(G)\setminus uv$. Therefore
\begin {equation}
\textnormal {mult} (\theta, S_{\theta}(G)\setminus (A_{\theta}(S_{\theta}(G))\cup \{u,v\}))=\textnormal {mult} (\theta, S_{\theta}(G)\setminus uv)+\vert A_{\theta}(G)\vert.\notag
\end {equation}

 Recall that $S_{\theta}(G)\setminus A_{\theta}(S_{\theta}(G))=G\setminus A_{\theta}(G)$. By Corollary \ref {P:C7}, and part (a) of Theorem \ref {basic_property}, we have
 \begin {align}
\textnormal {mult} (\theta, G\setminus (A_{\theta}(G)\cup \{u,v\})) &=\textnormal {mult} (\theta, Q_{j_0}\setminus v)+\textnormal {mult} (\theta, H_{i_0}\setminus u)+\sum_{1\leq i\leq q, i\neq i_0} \textnormal {mult} (\theta, H_i)\notag\\
&=\textnormal {mult} (\theta, Q_{j_0}\setminus v)+\textnormal {mult} (\theta, G)+\vert A_{\theta}(G)\vert-1.\notag
\end {align}
 This implies that $\textnormal {mult} (\theta, S_{\theta}(G)\setminus uv)=\textnormal {mult} (\theta, Q_{j_0}\setminus v)+\textnormal {mult} (\theta, G)-1=\textnormal {mult} (\theta, Q_{j_0}\setminus v)+\textnormal {mult} (\theta, S_{\theta}(G))-1$, where the last inequality follows from Corollary \ref{similar_gallai_edmond}.

(b), (c), (d) and (e) are proved similarly.
\end {proof}

\section{The graphs $D_{\theta}(G)$ and $D_{\theta}(S_{\theta}(G))$}

In this section, we shall determine the edge-set of $D_{\theta}(G)$ in terms of its Gallai-Edmonds decomposition (Theorem \ref{d_graph_for_G}). Finally, we shall prove that $D_{\theta}(G) = D_{\theta}(S_{\theta}(G))$ (Corollary \ref{gallai_Edmond_decomposition_G_S}).

First, we list all possibilities for $\m(\theta, G \setminus uv)$ with respect its Gallai-Edmonds decomposition:

\begin {lm}\label {BP:L4a} Let $G$ be a graph. Then the following hold.
\begin {itemize}
\item [(a)] If $u\in B_{\theta}(G)$ then $\textnormal {mult} (\theta, G)-2\leq \textnormal {mult} (\theta, G\setminus uv)\leq \textnormal {mult} (\theta, G)$ for all $v\in V(G)\setminus \{u\}$.
\item [(b)] If $u\in P_{\theta}(G)$ then $\textnormal {mult} (\theta, G)\leq \textnormal {mult} (\theta, G\setminus uv)\leq \textnormal {mult} (\theta, G)+2$ for all $v\in V(G)\setminus \{u\}$.
\item [(c)] If $u\in N_{\theta}(G)$ then $\textnormal {mult} (\theta, G)-1\leq \textnormal {mult} (\theta, G\setminus uv)\leq \textnormal {mult} (\theta, G)+1$ for all $v\in V(G)\setminus \{u\}$.
\item [(d)] If $u\in A_{\theta}(G)$ then
\begin {itemize}
\item [(i)]  $\textnormal {mult} (\theta, G\setminus uv)=\textnormal {mult} (\theta, G)+1$ whenever $v\in N_{\theta}(G)$,
\item [(ii)] $\textnormal {mult} (\theta, G\setminus uv)=\textnormal {mult} (\theta, G)+2$ whenever $v\in P_{\theta}(G)\cup (A_{\theta}(G)\setminus \{u\})$,
\item [(iii)] $\textnormal {mult} (\theta, G\setminus uv)=\textnormal {mult} (\theta, G)$ whenever $v\in B_{\theta}(G)$.
\end {itemize}
\end {itemize}
\end {lm}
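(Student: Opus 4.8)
The plan is to treat parts (a)--(c) uniformly by applying interlacing twice, and to handle part (d) separately using the $\theta$-Stability Lemma, which alone supplies the exact values.

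For parts (a), (b), and (c), I would write $G\setminus uv=(G\setminus u)\setminus v$ and apply Lemma \ref{interlacing} to the deletion of $v$ from $G\setminus u$, obtaining
\[ \m(\theta, G\setminus u)-1\le \m(\theta, G\setminus uv)\le \m(\theta, G\setminus u)+1. \]
The type of $u$ then fixes $\m(\theta, G\setminus u)$: it equals $\m(\theta,G)-1$ when $u\in B_{\theta}(G)$, equals $\m(\theta,G)+1$ when $u\in P_{\theta}(G)$, and equals $\m(\theta,G)$ when $u\in N_{\theta}(G)$. Substituting each of these into the displayed inequality yields precisely the three ranges claimed in (a), (b), and (c), with no further input required.

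For part (d), I would use that a $\theta$-special vertex is $\theta$-positive, so $\m(\theta, G\setminus u)=\m(\theta,G)+1$. The essential point is that deleting the special vertex $u$ does not disturb the Gallai-Edmonds type of any other vertex: by Theorem \ref{P:T5} we have $B_{\theta}(G\setminus u)=B_{\theta}(G)$, $P_{\theta}(G\setminus u)=P_{\theta}(G)$, $N_{\theta}(G\setminus u)=N_{\theta}(G)$, and $A_{\theta}(G\setminus u)=A_{\theta}(G)\setminus\{u\}$. Hence the type of $v$ in $G\setminus u$ can be read directly off its type in $G$. Writing $\m(\theta, G\setminus uv)=\m(\theta, (G\setminus u)\setminus v)$ and recalling that deleting a $\theta$-essential (resp. $\theta$-neutral, $\theta$-positive) vertex decreases (resp. preserves, increases) the multiplicity by one, I would settle the three subcases at once: if $v\in N_{\theta}(G)$ then $v$ is $\theta$-neutral in $G\setminus u$, giving $\m(\theta, G\setminus uv)=\m(\theta,G)+1$; if $v\in P_{\theta}(G)\cup(A_{\theta}(G)\setminus\{u\})$ then $v$ is $\theta$-positive in $G\setminus u$ (again because special implies positive), giving $\m(\theta, G\setminus uv)=\m(\theta,G)+2$; and if $v\in B_{\theta}(G)$ then $v$ is $\theta$-essential in $G\setminus u$, giving $\m(\theta, G\setminus uv)=\m(\theta,G)$. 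This establishes (i), (ii), and (iii).

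The argument presents no genuine obstacle. Parts (a)--(c) are immediate from double interlacing, and the only subtlety worth flagging is that the exact equalities in (d) cannot come from interlacing alone---which merely bounds the multiplicity---but require the full strength of the $\theta$-Stability Lemma to pin down the precise type of $v$ once the special vertex $u$ has been removed.
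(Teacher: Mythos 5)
Your proof is correct and follows exactly the route the paper takes: parts (a)--(c) by applying interlacing to $(G\setminus u)\setminus v$ after fixing $\m(\theta,G\setminus u)$ from the type of $u$, and part (d) by reading off the type of $v$ in $G\setminus u$ from the $\theta$-Stability Lemma (Theorem \ref{P:T5}). The paper states this proof in two lines; you have merely written out the same details.
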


\begin {proof} Clearly, if $u \in B_{\theta}(G)$, then $\textnormal {mult} (\theta, G\setminus u)=\textnormal {mult} (\theta, G)-1$. So part (a) follows from Lemma \ref {interlacing}. Part (b) and (c) are proved similarly. Part (d) follows from Theorem \ref {P:T5}.
\end {proof}

Recall that $D_{\theta}(G) = D_{-2, \theta}(G) \cup D_{-1,\theta}(G) \cup D_{0,\theta}(G)$. Therefore, in order to determine the edges in $D_{\theta}(G)$, we can first determine the edges in $D_{r, \theta}(G)$ for $r=-2,-1,0$. However the graphs $D_{r,\theta}(G)$ do not behave `nicely'. Therefore we shall study $D_{r,\theta}(S_{\theta}(G))$ instead.  In fact, we shall do this for all $r=-2,-1,0,1,2$.

\subsection{$D_{-2,\theta}(G)$}

\begin {lm}\label {BP:L4} Let $G$ be a graph with $\textnormal {mult} (\theta, G)=0$ or 1. Then $D_{-2,\theta}(G)$ is an empty graph with $\vert V(G)\vert$ vertices.
\end {lm}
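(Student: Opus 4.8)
The plan is to observe that this is an immediate consequence of the nonnegativity of root multiplicities together with the definition of $D_{-2,\theta}(G)$. Recall from Definition \ref{BP:D3} that an edge $(u,v)$ lies in $D_{-2,\theta}(G)$ precisely when $\textnormal{mult}(\theta, G\setminus uv) = \textnormal{mult}(\theta, G) - 2$. Since $\textnormal{mult}(\theta, \cdot)$ counts the multiplicity of $\theta$ as a root of a matching polynomial, it is always a nonnegative integer; in particular $\textnormal{mult}(\theta, G\setminus uv) \ge 0$ for every pair of distinct vertices $u,v$.

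The argument then splits trivially according to the two allowed values of $\textnormal{mult}(\theta, G)$. First I would note that if $\textnormal{mult}(\theta, G) = 0$, then the existence of an edge $(u,v)$ in $D_{-2,\theta}(G)$ would force $\textnormal{mult}(\theta, G\setminus uv) = -2$, which is impossible. Likewise, if $\textnormal{mult}(\theta, G) = 1$, an edge would require $\textnormal{mult}(\theta, G\setminus uv) = -1$, again impossible. Uniformly, whenever $\textnormal{mult}(\theta, G) \le 1$ we have
\[
\textnormal{mult}(\theta, G\setminus uv) \ge 0 > \textnormal{mult}(\theta, G) - 2,
\]
so the defining equality for an edge of $D_{-2,\theta}(G)$ can never be met.

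Consequently $D_{-2,\theta}(G)$ has no edges at all, and since $V(D_{-2,\theta}(G)) = V(G)$ by Definition \ref{BP:D3}(a), it is the empty graph on $|V(G)|$ vertices, as claimed. There is no genuine obstacle here: the statement is purely an arithmetic consequence of nonnegativity of multiplicities, and no appeal to interlacing or the Gallai--Edmonds machinery is needed. The only point worth recording explicitly is that the hypothesis $\textnormal{mult}(\theta, G) \le 1$ is exactly what guarantees $\textnormal{mult}(\theta, G) - 2 < 0$, which is the crux of why this particular $D_{r,\theta}$ layer is empty in these low-multiplicity cases.
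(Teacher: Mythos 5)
Your proof is correct and is essentially identical to the paper's own argument: both rest solely on the observation that $\textnormal{mult}(\theta, G\setminus uv)\geq 0$, so the defining equality $\textnormal{mult}(\theta, G\setminus uv)=\textnormal{mult}(\theta,G)-2$ is unattainable when $\textnormal{mult}(\theta,G)\leq 1$. No further comment is needed.
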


\begin {proof} Since $\textnormal {mult} (\theta, G\setminus uv)\geq 0$ for all $u,v\in V(G)$,  we can never have $\textnormal {mult} (\theta, G\setminus uv)=\textnormal {mult} (\theta, G)-2$. Hence the lemma holds.
\end {proof}

\begin {lm}\label {BP:L6a} Let $G$ be a graph with $\textnormal {mult} (\theta, G)\geq 2$. Let $H_1,\dots, H_q$ be all the $\theta$-critical components in $G\setminus A_{\theta}(G)$. If $(u,v)\in E(D_{-2,\theta}(G))$, then $u\in V(H_i)$ and $v\in V(H_j)$ for some $i\neq j$.
\end {lm}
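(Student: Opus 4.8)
The plan is to first pin down the vertex types of $u$ and $v$ from the interlacing bounds, and then rule out their lying in a common component by an interlacing estimate against $G\setminus A_{\theta}(G)$.

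First I would use Lemma \ref{BP:L4a} to force both endpoints of an edge of $D_{-2,\theta}(G)$ to be $\theta$-essential. By definition, $(u,v)\in E(D_{-2,\theta}(G))$ means $\m(\theta,G\setminus uv)=\m(\theta,G)-2$. But if $u\in P_{\theta}(G)$, $N_{\theta}(G)$, or $A_{\theta}(G)$, then parts (b), (c), (d) of Lemma \ref{BP:L4a} respectively give $\m(\theta,G\setminus uv)\ge \m(\theta,G)-1>\m(\theta,G)-2$, a contradiction; hence $u\in B_{\theta}(G)$, and by the same argument $v\in B_{\theta}(G)$. By part (iv) of Corollary \ref{P:C7}, the subgraph induced by $B_{\theta}(G)$ is precisely the union of the $\theta$-critical components $H_1,\dots,H_q$ of $G\setminus A_{\theta}(G)$, so $u\in V(H_i)$ and $v\in V(H_j)$ for some $i,j$.

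It remains to show $i\neq j$, which is the crux. I would argue by contradiction, assuming $u,v\in V(H_i)$. Let $H_1,\dots,H_q,Q_1,\dots,Q_m$ be the components of $G\setminus A_{\theta}(G)$, where (by Corollary \ref{P:C7}(iii)) each $Q_j$ satisfies $\m(\theta,Q_j)=0$ while each $H_k$ is $\theta$-critical with $\m(\theta,H_k)=1$. Since $u,v\notin A_{\theta}(G)$, deletion of $u,v$ commutes with deletion of $A_{\theta}(G)$, and as both lie in the single component $H_i$, the disjoint-union formula (Theorem \ref{basic_property}(a)) gives
\[ \m(\theta,(G\setminus A_{\theta}(G))\setminus uv)=\m(\theta,H_i\setminus uv)+(q-1). \]
By Corollary \ref{P:C7}(ii), $q=|A_{\theta}(G)|+\m(\theta,G)$, so the right-hand side equals $\m(\theta,H_i\setminus uv)+|A_{\theta}(G)|+\m(\theta,G)-1$. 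On the other hand, applying Lemma \ref{interlacing} once for each vertex of $A_{\theta}(G)$ to pass from $G\setminus uv$ to $(G\setminus uv)\setminus A_{\theta}(G)=(G\setminus A_{\theta}(G))\setminus uv$ yields
\[ \m(\theta,(G\setminus A_{\theta}(G))\setminus uv)\le \m(\theta,G\setminus uv)+|A_{\theta}(G)|. \]
Combining the two displays and cancelling $|A_{\theta}(G)|$ gives $\m(\theta,G\setminus uv)\ge \m(\theta,H_i\setminus uv)+\m(\theta,G)-1\ge \m(\theta,G)-1$, contradicting $\m(\theta,G\setminus uv)=\m(\theta,G)-2$. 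Hence $i\neq j$, as required.

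The main obstacle is this last step: the bound $\m(\theta,G\setminus uv)\ge \m(\theta,G)-1$ cannot come from interlacing on $G$ alone, which only yields $\ge \m(\theta,G)-2$, so one must recover the extra unit from the component structure. The device that achieves this is comparing $G\setminus uv$ with $(G\setminus A_{\theta}(G))\setminus uv$ and exploiting that \emph{both} deleted vertices fall inside one $\theta$-critical component, so the multiplicity drop localized to that component is at most one. I do not expect the identification of the vertex types (the first step) to present any difficulty, since it is a direct reading of the bounds in Lemma \ref{BP:L4a}.
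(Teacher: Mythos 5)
Your proposal is correct and follows essentially the same route as the paper's proof: both force $u,v\in B_{\theta}(G)$ via Lemma \ref{BP:L4a}, place them in the $\theta$-critical components via Corollary \ref{P:C7}(iv), and derive the contradiction by comparing the lower bound $\m(\theta,H_{i}\setminus uv)+q-1$ for $\m(\theta,(G\setminus A_{\theta}(G))\setminus uv)$ against the interlacing upper bound $\m(\theta,G\setminus uv)+|A_{\theta}(G)|$. The only cosmetic difference is that you spell out the case analysis in Lemma \ref{BP:L4a} and cancel $|A_{\theta}(G)|$ explicitly, which the paper leaves implicit.
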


\begin {proof} Suppose $(u,v)\in E(D_{-2,\theta}(G))$. By Lemma \ref {BP:L4a}, we must have $u,v\in B_{\theta}(G)$. By part (iv) of Corollary \ref {P:C7}, $u,v\in V(H_1)\cup\cdots\cup V(H_q)$.  Suppose $u,v\in V(H_{j_0})$ for some $j_0$. By Corollary \ref {P:C7} and part (a) of Theorem \ref {basic_property}, we have
$\textnormal {mult} (\theta, G\setminus A_{\theta}(G))=\sum_{i=1}^q \textnormal {mult} (\theta, H_i)=q=\textnormal {mult} (\theta, G)+\vert A_{\theta}(G)\vert$. Note that $\textnormal {mult} (\theta, H_{j_0}\setminus u)=0$. Therefore $\textnormal {mult} (\theta, H_{j_0}\setminus uv)\geq 0$ and that $\textnormal {mult} (\theta, G\setminus (A_{\theta}(G)\cup \{u,v\}))=\textnormal {mult} (\theta, H_{j_0}\setminus uv)+\sum_{1\leq i\leq q, i\neq j_0} \textnormal {mult} (\theta, H_i)\geq q-1=\textnormal {mult} (\theta, G)+\vert A_{\theta}(G)\vert-1$.

On the other hand, $\textnormal {mult} (\theta, G\setminus uv)=\textnormal {mult} (\theta, G)-2$. By Lemma \ref {interlacing}, $\textnormal {mult} (\theta, G\setminus (A_{\theta}(G)\cup \{u,v\}))\leq \textnormal {mult} (\theta, G)-2+\vert A_{\theta}(G)\vert$, a contradiction. Hence $u\in V(H_i)$ and $v\in V(H_j)$ for some $i\neq j$.
\end {proof}

Note that in general the converse of Lemma \ref {BP:L6a} is not true. In the following graph $G$ (see Figure 1), we have $A_{1}(G)=\{u,v\}$ and $H_1,H_2,H_3,H_4$ are all the $1$-critical components in $G\setminus A_{1}(G)$. Now $\textnormal {mult} (1,G)=2$ and $w\in  V(H_1)$, $z\in V(H_2)$. But $\textnormal {mult} (1,G\setminus wz)=1\neq 0=\textnormal {mult} (1,G)-2$.
\begin{center}
\begin{pspicture}(0,0)(6,6)
%---------------------
%   vertices
%---------------------
\cnodeput(1.5, 5){1}{}
\cnodeput(5, 5){2}{}
\cnodeput(0.5, 4){3}{}
\cnodeput(0.5, 3){4}{}
\cnodeput(2.5, 4){5}{}
\cnodeput(2.5, 3){6}{}
\cnodeput(4, 4){7}{}
\cnodeput(4, 3){8}{}
\cnodeput(6, 4){9}{}
\cnodeput(6, 3){10}{}
%--------------------
% edges
%--------------------
\ncline{1}{3}
\ncline{1}{5}
\ncline{2}{5}
\ncline{2}{7}
\ncline{2}{9}
\ncline{3}{4}
\ncline{5}{6}
\ncline{7}{8}
\ncline{9}{10}
%--------------------
%      ellipse
%--------------------
\psellipse[linestyle=dotted](0.5,3)(0.5,1.5)
\psellipse[linestyle=dotted](2.5,3)(0.5,1.5)
\psellipse[linestyle=dotted](4,3)(0.5,1.5)
\psellipse[linestyle=dotted](6,3)(0.5,1.5)

%--------------------
%      caption
%--------------------
\rput(3,1){Figure 1.}
\rput(-1,4){$G=$}
\rput(0.5,2){$H_1$}
\rput(2.5,2){$H_2$}
\rput(4,2){$H_3$}
\rput(6,2){$H_4$}
\rput(1.8,5){$u$}
\rput(5.3,5){$v$}
\rput(0.8,3){$w$}
\rput(2.8,3){$z$}
\end{pspicture}
\end{center}
However it is true for the graph $S_{\theta}(G)$ (see Theorem \ref {d_graph_2_negative}).

\begin {thm}\label{d_graph_2_negative} Let $G$ be a graph with $\textnormal {mult} (\theta, G)\geq 2$. Let $H_1,\dots, H_q$ be all the $\theta$-critical components in $G\setminus A_{\theta}(G)$. Then  $(u,v)\in E(D_{-2,\theta}(S_{\theta}(G)))$ if and only if $u\in V(H_i)$ and $v\in V(H_j)$ for some $i\neq j$.
\end {thm}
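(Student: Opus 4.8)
The plan is to prove the two implications separately, observing that each reduces to a result already established once the Gallai--Edmonds data of $G$ is transferred to $S_{\theta}(G)$. By Corollary \ref{similar_gallai_edmond} we have $\m(\theta, S_{\theta}(G)) = \m(\theta, G) \geq 2$, $A_{\theta}(S_{\theta}(G)) = A_{\theta}(G)$, and $S_{\theta}(G) \setminus A_{\theta}(S_{\theta}(G)) = G \setminus A_{\theta}(G)$. Consequently $H_1, \dots, H_q$ are precisely the $\theta$-critical components of $S_{\theta}(G) \setminus A_{\theta}(S_{\theta}(G))$. This is the crucial point: although $S_{\theta}(G)$ is a strictly larger graph, it carries the same Gallai--Edmonds decomposition as $G$, so every structural statement proved for an arbitrary graph applies verbatim to $S_{\theta}(G)$ with the same components $H_i$.

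For the forward implication, suppose $(u,v) \in E(D_{-2,\theta}(S_{\theta}(G)))$. Since $S_{\theta}(G)$ is itself a graph with $\m(\theta, S_{\theta}(G)) \geq 2$ and $\theta$-critical components $H_1, \dots, H_q$ in $S_{\theta}(G) \setminus A_{\theta}(S_{\theta}(G))$, I would apply Lemma \ref{BP:L6a} directly to $S_{\theta}(G)$. This immediately yields $u \in V(H_i)$ and $v \in V(H_j)$ for some $i \neq j$, as required. For the reverse implication, suppose $u \in V(H_i)$ and $v \in V(H_j)$ with $i \neq j$. This is exactly the hypothesis of part (e) of Corollary \ref{S:C6}, which gives $\m(\theta, S_{\theta}(G) \setminus uv) = \m(\theta, S_{\theta}(G)) - 2$, i.e.\ $(u,v) \in E(D_{-2,\theta}(S_{\theta}(G)))$.

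There is therefore no genuine remaining obstacle at this stage: the content of the theorem has been absorbed into Corollary \ref{S:C6}(e), whose proof in turn rested on Theorem \ref{extreme_set_in_S} (that $A_{\theta}(G)$ is $\theta$-extreme in $S_{\theta}(G) \setminus uv$). The one thing worth emphasizing in the writeup is \emph{why} the converse holds for $S_{\theta}(G)$ whereas it fails for $G$ itself (as the example in Figure 1 shows): passing to $S_{\theta}(G)$ joins every $\theta$-special vertex to all other vertices, which forces $A_{\theta}(S_{\theta}(G))$ to remain $\theta$-extreme after deleting one vertex from each of two distinct $\theta$-critical components, and it is precisely this extremality that pins the multiplicity drop to exactly $2$ rather than merely bounding it by $2$. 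Thus the heavy lifting was done in establishing Theorem \ref{extreme_set_in_S} and Corollary \ref{S:C6}, and the present statement is the clean packaging of those facts as an edge characterization of $D_{-2,\theta}(S_{\theta}(G))$.
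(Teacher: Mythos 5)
Your proposal is correct and follows exactly the paper's own argument: the forward direction by applying Lemma \ref{BP:L6a} to $S_{\theta}(G)$ (using that $S_{\theta}(G)\setminus A_{\theta}(S_{\theta}(G))=G\setminus A_{\theta}(G)$ and Corollary \ref{similar_gallai_edmond} preserves the multiplicity and decomposition), and the reverse direction by part (e) of Corollary \ref{S:C6}. No differences worth noting.
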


\begin {proof} Suppose $u\in V(H_i)$ and $v\in V(H_j)$ for some $i\neq j$. By part (e) of Corollary \ref {S:C6}, we have  $\textnormal {mult} (\theta, S_{\theta}(G)\setminus uv)=\textnormal {mult} (\theta, S_{\theta}(G))-2$. So $(u,v)\in E(D_{-2,\theta}(S_{\theta}(G)))$.

The converse follows from  Lemma \ref {BP:L6a} (Recall that $S_{\theta}(G)\setminus A_{\theta}(S_{\theta}(G))=G\setminus A_{\theta}(G)$).
\end {proof}

\subsection{$D_{-1,\theta}(G)$}

The proof of the following lemma is similar to Lemma \ref {BP:L4} and therefore is omitted.
\begin {lm}\label {BP:L9} Let $G$ be a graph with $\textnormal {mult} (\theta, G)=0$. Then $D_{-1,\theta}(G)$ is an empty graph with $\vert V(G)\vert$ vertices.
\end {lm}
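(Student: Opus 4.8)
The plan is to mirror the argument used for Lemma \ref{BP:L4}, exploiting only the nonnegativity of root multiplicities. By Definition \ref{BP:D3}, an edge $(u,v)$ belongs to $D_{-1,\theta}(G)$ precisely when $\m(\theta, G \setminus uv) = \m(\theta, G) - 1$. Since we are given $\m(\theta, G) = 0$, the existence of such an edge would force $\m(\theta, G \setminus uv) = -1$.

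The key (and essentially only) step is to observe that the multiplicity of any real number as a root of a matching polynomial is a nonnegative integer, so $\m(\theta, G \setminus uv) \ge 0$ for every pair of distinct vertices $u, v \in V(G)$. Hence the equality $\m(\theta, G \setminus uv) = -1$ can never occur, and $D_{-1,\theta}(G)$ has no edges at all. Since its vertex set is $V(G)$ by definition, it is exactly the empty graph on $\vert V(G)\vert$ vertices, as claimed.

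There is no genuine obstacle here, which is precisely why the paper omits the proof and flags it as similar to that of Lemma \ref{BP:L4}. The only point worth noting is that the relevant shift is $r = -1$ (an odd drop in multiplicity) rather than $r = -2$; in contrast to the interlacing- and Gallai--Edmonds-based arguments elsewhere in this section, neither a parity consideration nor any structural input is needed. Moreover, the hypothesis $\m(\theta, G) = 0$ is exactly what is required to push the target value $\m(\theta,G)-1$ below zero, so the weaker hypothesis $\m(\theta,G)\le 1$ used in Lemma \ref{BP:L4} would not suffice for the $r=-1$ case.
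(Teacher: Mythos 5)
Your proof is correct and is exactly the argument the paper intends when it says the proof is "similar to Lemma \ref{BP:L4} and therefore is omitted": an edge of $D_{-1,\theta}(G)$ would force $\m(\theta, G\setminus uv)=-1$, which is impossible since multiplicities are nonnegative. Your closing remark that the hypothesis must be sharpened from $\m(\theta,G)\le 1$ to $\m(\theta,G)=0$ for the $r=-1$ case is also accurate.
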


Using Lemma \ref{Ku-Chen-neutral} and Lemma \ref {BP:L4a}, one can easily deduce Lemma \ref {BP:L10}.
\begin {lm}\label {BP:L10} Let $G$ be a graph with $\textnormal {mult} (\theta, G)\geq 1$. If $(u,v)\in E(D_{-1,\theta}(G))$ then either $u\in N_{\theta}(G)$ and $v\in B_{\theta}(G)$ or $u,v\in B_{\theta}(G)$.
\end {lm}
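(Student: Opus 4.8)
The plan is to eliminate, one endpoint-class at a time, every possibility other than the two allowed configurations, using only the multiplicity bounds recorded in Lemma \ref{BP:L4a} together with the behaviour of neutral vertices in Lemma \ref{Ku-Chen-neutral}. Recall that an edge $(u,v)$ of $D_{-1,\theta}(G)$ is exactly a pair with $\m(\theta, G\setminus uv)=\m(\theta, G)-1$, and that $V(G)=B_{\theta}(G)\cup A_{\theta}(G)\cup N_{\theta}(G)\cup P_{\theta}(G)$.

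First I would show that neither endpoint is $\theta$-positive: if $u\in P_{\theta}(G)$, then part (b) of Lemma \ref{BP:L4a} gives $\m(\theta, G\setminus uv)\geq \m(\theta, G)$, contradicting the edge condition $\m(\theta, G\setminus uv)=\m(\theta, G)-1$. Next I would show that neither endpoint is $\theta$-special: if $u\in A_{\theta}(G)$, then every one of the three cases (i)--(iii) in part (d) of Lemma \ref{BP:L4a} yields $\m(\theta, G\setminus uv)\geq \m(\theta, G)$, again impossible. Since the edge $(u,v)$ is unordered, the identical argument applied with the roles of $u$ and $v$ exchanged forces $u,v\in B_{\theta}(G)\cup N_{\theta}(G)$.

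The remaining task is to exclude the case that $u$ and $v$ are both $\theta$-neutral. Assuming $u\in N_{\theta}(G)$, we have $\m(\theta, G\setminus u)=\m(\theta, G)$; if in addition $v\in N_{\theta}(G)$, then part (c) of Lemma \ref{Ku-Chen-neutral} says $v$ is $\theta$-neutral or $\theta$-positive in $G\setminus u$, so $\m(\theta, G\setminus uv)=\m(\theta,(G\setminus u)\setminus v)\geq \m(\theta, G\setminus u)=\m(\theta, G)$, once more contradicting the edge condition. Hence at least one endpoint lies in $B_{\theta}(G)$, and combined with the previous paragraph this leaves precisely $u,v\in B_{\theta}(G)$, or, after relabelling the endpoints (which is permissible because $D_{-1,\theta}(G)$ is undirected), $u\in N_{\theta}(G)$ and $v\in B_{\theta}(G)$.

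I do not anticipate a genuine obstacle here: the proof is a bookkeeping exercise in which each undesired class is ruled out by a one-line inequality from Lemma \ref{BP:L4a} or Lemma \ref{Ku-Chen-neutral}. The only point deserving a word of care is the symmetric handling of the two endpoints together with the relabelling used to state the mixed case as \emph{$u$ neutral, $v$ essential}; this is exactly why the conclusion can be phrased as the disjunction in the statement rather than having to carry both orderings.
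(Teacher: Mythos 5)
Your proposal is correct and is precisely the argument the paper intends: the paper's proof of Lemma \ref{BP:L10} is the one-line remark that it follows from Lemma \ref{Ku-Chen-neutral} and Lemma \ref{BP:L4a}, and your case analysis (ruling out $P_{\theta}$ and $A_{\theta}$ endpoints via parts (b) and (d) of Lemma \ref{BP:L4a}, then excluding the doubly-neutral case via part (c) of Lemma \ref{Ku-Chen-neutral}) is exactly the routine deduction being alluded to. No gaps.
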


\begin {thm}\label{d_graph_1_negative} Let $G$ be a graph with $\textnormal {mult} (\theta, G)\geq 2$. Let $H_1,\dots, H_q$ be all the $\theta$-critical components in $G\setminus A_{\theta}(G)$. Then  $(u,v)\in E(D_{-1,\theta}(S_{\theta}(G)))$ if and only if
\begin {itemize}
\item [(a)] $u\in N_{\theta}(G)$ and $v\in B_{\theta}(G)$, or
\item [(b)] $(u,v)\in E(D_{-1,\theta}(H_{i_0}))$ for some $i_0$.
\end {itemize}
\end {thm}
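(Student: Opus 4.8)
The plan is to reduce the characterization of $E(D_{-1,\theta}(S_{\theta}(G)))$ to the Gallai--Edmonds data via Lemma \ref{BP:L10}, and then to evaluate the relevant multiplicities using Corollary \ref{S:C6}. Throughout I work with $G'=S_{\theta}(G)$ and freely invoke Corollary \ref{similar_gallai_edmond}, which guarantees that $G'$ and $G$ share the same Gallai--Edmonds decomposition; in particular $\m(\theta,G')=\m(\theta,G)\ge 2$, $N_{\theta}(G')=N_{\theta}(G)$, $B_{\theta}(G')=B_{\theta}(G)$, and by Corollary \ref{P:C7}(iv) the essential set $B_{\theta}(G')$ is exactly $V(H_1)\cup\cdots\cup V(H_q)$.

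For the forward direction, suppose $(u,v)\in E(D_{-1,\theta}(G'))$, i.e.\ $\m(\theta,G'\setminus uv)=\m(\theta,G')-1$. Applying Lemma \ref{BP:L10} to $G'$ (legitimate since $\m(\theta,G')\ge 1$) leaves only two possibilities: either $u\in N_{\theta}(G)$ and $v\in B_{\theta}(G)$, which is precisely conclusion (a); or $u,v\in B_{\theta}(G)$, so $u\in V(H_{i_1})$ and $v\in V(H_{i_2})$. In the latter case I split on whether $i_1=i_2$. If $i_1\ne i_2$, Corollary \ref{S:C6}(e) forces $\m(\theta,G'\setminus uv)=\m(\theta,G')-2$, contradicting $(u,v)\in E(D_{-1,\theta}(G'))$, so this never occurs. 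If $i_1=i_2=i_0$, Corollary \ref{S:C6}(d) gives $\m(\theta,G'\setminus uv)=\m(\theta,G')-1+\m(\theta,H_{i_0}\setminus uv)$, whence $\m(\theta,H_{i_0}\setminus uv)=0$. Since $H_{i_0}$ is $\theta$-critical, $\m(\theta,H_{i_0})=1$ by Theorem \ref{P:T6}, so $\m(\theta,H_{i_0}\setminus uv)=\m(\theta,H_{i_0})-1$, i.e.\ $(u,v)\in E(D_{-1,\theta}(H_{i_0}))$, which is conclusion (b).

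For the converse I check (a) and (b) separately. If (a) holds, then $u\in N_{\theta}(G')$ and $v\in B_{\theta}(G')$; deleting the neutral vertex $u$ first keeps $v$ essential by Lemma \ref{Ku-Chen-neutral}(b), so $v\in B_{\theta}(G'\setminus u)$ and hence $\m(\theta,G'\setminus uv)=\m(\theta,G'\setminus u)-1=\m(\theta,G')-1$, giving the required edge. If (b) holds, then $u,v\in V(H_{i_0})$ with $\m(\theta,H_{i_0}\setminus uv)=\m(\theta,H_{i_0})-1=0$, and Corollary \ref{S:C6}(d) again yields $\m(\theta,G'\setminus uv)=\m(\theta,G')-1$, so $(u,v)\in E(D_{-1,\theta}(G'))$.

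The argument is essentially bookkeeping once these two workhorses are in hand, so there is no deep obstacle; the points requiring care are (i) confirming that Lemma \ref{BP:L10} rules out both endpoints being $\theta$-neutral, so that case (a) is the only ``mixed'' possibility, and (ii) in the converse for (a), invoking the correct part of Lemma \ref{Ku-Chen-neutral} by deleting the neutral vertex $u$ \emph{before} the essential vertex $v$, which guarantees $v$ stays essential; deleting in the opposite order would require knowing the type of $u$ in $G'\setminus v$, which is less immediate.
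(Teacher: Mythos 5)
Your proposal is correct and follows essentially the same route as the paper's proof: Lemma \ref{BP:L10} to reduce to the two Gallai--Edmonds cases, Corollary \ref{S:C6}(d) and (e) to handle the case of two essential endpoints, and Lemma \ref{Ku-Chen-neutral} for the neutral--essential direction. The only (harmless) difference is that you make explicit the use of the $\theta$-Gallai's Lemma to identify $\m(\theta,H_{i_0}\setminus uv)=0$ with membership in $E(D_{-1,\theta}(H_{i_0}))$, which the paper leaves implicit.
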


\begin {proof} Suppose (a) holds. By Corollary \ref {similar_gallai_edmond}, $u\in N_{\theta}(S_{\theta}(G))$ and $v\in B_{\theta}(S_{\theta}(G))$. By Lemma \ref{Ku-Chen-neutral}, $\textnormal {mult} (\theta, S_{\theta}(G)\setminus uv)=\textnormal {mult} (\theta, S_{\theta}(G))-1$. Thus $(u,v)\in E(D_{-1,\theta}(S_{\theta}(G)))$.

Suppose (b) holds. Then $\textnormal {mult} (\theta, H_{i_0}\setminus uv)=0$. By part (d) of Corollary \ref {S:C6}, $
\textnormal {mult} (\theta, S_{\theta}(G)\setminus uv)=\textnormal {mult} (\theta, S_{\theta}(G))-1+\textnormal {mult} (\theta, H_{i_0}\setminus uv)=\textnormal {mult} (\theta, S_{\theta}(G))-1$.  Hence  $(u,v)\in E(D_{-1,\theta}(S_{\theta}(G)))$.

Suppose $(u,v)\in E(D_{-1,\theta}(S_{\theta}(G)))$. By  Lemma \ref {BP:L10}, we may assume that $u,v\in B_{\theta}(G)$. Note that $H_1,\dots, H_q$ are all the $\theta$-critical components in $S_{\theta}(G)\setminus A_{\theta}(S_{\theta}(G))$. By part (d) and (e) of Corollary \ref {S:C6}, we must have $u,v\in  V(H_{i_0})$ for some $i_0$.
Therefore $\textnormal {mult} (\theta, S_{\theta}(G))-1=
\textnormal {mult} (\theta, S_{\theta}(G)\setminus uv)=\textnormal {mult} (\theta, S_{\theta}(G))-1+\textnormal {mult} (\theta, H_{i_0}\setminus uv)$, which implies that $\textnormal {mult} (\theta, H_{i_0}\setminus uv)=0$. Hence  $(u,v)\in E(D_{-1,\theta}(H_{i_0}))$.
\end {proof}

\subsection{$D_{0,\theta}(G)$}

Using Lemma \ref{godsil_positive}, and Lemma \ref {BP:L4a}, one can easily deduce Lemma \ref {BP:L13}.
\begin {lm}\label {BP:L13} Let $G$ be a graph. If $(u,v)\in E(D_{0,\theta}(G))$ then either $u\in P_{\theta}(G)\cup A_{\theta}(G)$ and $v\in B_{\theta}(G)$ or $u,v\in B_{\theta}(G)$ or $u,v\in P_{\theta}(G)\cup N_{\theta}(G)$.
\end {lm}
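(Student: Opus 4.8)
The statement to prove is Lemma \ref{BP:L13}: if $(u,v)\in E(D_{0,\theta}(G))$, meaning $\m(\theta, G\setminus uv)=\m(\theta,G)$, then the pair $\{u,v\}$ falls into one of the listed type-combinations. The plan is to argue by eliminating the forbidden combinations of Gallai--Edmonds classes for $u$ and $v$, using the one-vertex-deletion bounds of Lemma \ref{BP:L4a} together with the more refined two-step information supplied by Lemma \ref{godsil_positive}. The key observation is that $\m(\theta, G\setminus uv)=\m(\theta,G)$ is a rather rigid condition: deleting the second vertex must exactly undo whatever shift the first deletion produced.

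First I would split into cases according to the class of $u$, and for each subcase track what Lemma \ref{BP:L4a} forces on $v$. If $u\in A_\theta(G)$, then part (d) of Lemma \ref{BP:L4a} tells us that $\m(\theta, G\setminus uv)$ equals $\m(\theta,G)$ \emph{only} when $v\in B_\theta(G)$, which is exactly one of the allowed outcomes; the cases $v\in N_\theta(G)$ or $v\in P_\theta(G)\cup(A_\theta(G)\setminus\{u\})$ give $\m(\theta,G)+1$ or $\m(\theta,G)+2$ and hence are excluded. If $u\in B_\theta(G)$, then $u$ is $\theta$-essential, so $\m(\theta,G\setminus u)=\m(\theta,G)-1$; to recover $\m(\theta,G)$ after deleting $v$ we need $v$ to be $\theta$-positive in $G\setminus u$. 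By part (a) of Lemma \ref{godsil_positive} a $\theta$-essential vertex stays essential after deleting a $\theta$-essential vertex, so $v$ cannot be $\theta$-essential in $G$ unless... — here I would invoke the remark following Lemma \ref{godsil_positive} (the assertions hold even when $\theta$ is not a root) to handle the case $\m(\theta,G\setminus u)=0$. The upshot is that either $v\in B_\theta(G)$ (allowed) or $v\in P_\theta(G)$ with $u\in B_\theta(G)$, which is covered by the symmetric reading of the first listed outcome.

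The genuinely substantive cases are $u\in P_\theta(G)$ and $u\in N_\theta(G)$, where I expect the main obstacle to lie. For $u\in P_\theta(G)$ we have $\m(\theta,G\setminus u)=\m(\theta,G)+1$, and restoring the multiplicity requires $v$ to be $\theta$-essential in $G\setminus u$. Applying Lemma \ref{godsil_positive} with the roles set up correctly: part (a) says a $\theta$-essential $v$ in $G$ is still $\theta$-essential in $G\setminus u$ (allowed, giving $v\in B_\theta(G)$, i.e. the first outcome with $u,v$ swapped), while parts (b) and (c) say a $\theta$-positive or $\theta$-neutral $v$ in $G$ becomes essential, positive, or neutral in $G\setminus u$. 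Only the "essential in $G\setminus u$" alternative produces an edge, and I would need to verify this is consistent with $v\in P_\theta(G)\cup N_\theta(G)$ — precisely the third listed outcome. The delicate point is ruling out $v\in A_\theta(G)$: since an $\theta$-special vertex is $\theta$-positive (cited from \cite[Corollary 4.3]{G}), and one must check that deleting the $\theta$-positive $u$ cannot turn $v$ into a $\theta$-essential vertex while $v$ carries the special status, a subtlety that the lemma sidesteps by only requiring membership in the coarse classes $B_\theta, P_\theta\cup N_\theta$, etc.

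Finally, for $u\in N_\theta(G)$ I would use part (c) of Lemma \ref{BP:L4a} giving $\m(\theta,G\setminus u)=\m(\theta,G)$, so that $v$ must be $\theta$-neutral in $G\setminus u$ to keep the multiplicity unchanged; combining with Lemma \ref{godsil_positive}(c) (a $\theta$-neutral vertex stays essential or neutral under deletion of a positive vertex, with the analogous statement for neutral deletions) yields $v\in P_\theta(G)\cup N_\theta(G)$ or $v\in B_\theta(G)$. Assembling all the cases, every pair $(u,v)$ with $\m(\theta,G\setminus uv)=\m(\theta,G)$ is seen to satisfy one of the three stated type-patterns, and no case produces a pair outside the list; this is exactly the content of the lemma, so the proof would conclude by collecting the admissible combinations. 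Since the lemma is explicitly flagged in the paper as an easy consequence ("one can easily deduce"), I would keep the write-up to a compact case analysis rather than belaboring each interlacing inequality.
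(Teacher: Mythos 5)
Your overall strategy (a case analysis on the Gallai--Edmonds class of $u$, driven by Lemma \ref{BP:L4a} together with the vertex-deletion lemmas) is the intended one, and your treatment of $u\in A_{\theta}(G)$ is correct: part (d) of Lemma \ref{BP:L4a} forces $v\in B_{\theta}(G)$ there. But the one combination that the lemma actually has to rule out --- $u\in N_{\theta}(G)$ and $v\in B_{\theta}(G)$ (equivalently $u\in B_{\theta}(G)$, $v\in N_{\theta}(G)$) --- survives your case analysis. In your final case you conclude that for $u\in N_{\theta}(G)$ one gets ``$v\in P_{\theta}(G)\cup N_{\theta}(G)$ or $v\in B_{\theta}(G)$'', and then assert that every case lands in one of the three listed patterns; but a pair with $u\in N_{\theta}(G)$ and $v\in B_{\theta}(G)$ matches none of them, so as written your argument does not prove the lemma. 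Excluding this pair is the only genuinely nontrivial step, and it needs Lemma \ref{Ku-Chen-neutral}(b): deleting the neutral vertex $u$ leaves $v$ $\theta$-essential in $G\setminus u$, hence $\m(\theta,G\setminus uv)=\m(\theta,G\setminus u)-1=\m(\theta,G)-1$, so $(u,v)\notin E(D_{0,\theta}(G))$. (Lemma \ref{godsil_positive} cannot do this job, since it only governs deletion of a $\theta$-positive vertex; it is needed instead for the cases where one endpoint is positive.)

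A secondary problem: in the case $u\in B_{\theta}(G)$ you quote part (a) of Lemma \ref{godsil_positive} as ``a $\theta$-essential vertex stays essential after deleting a $\theta$-essential vertex''. That is not what the lemma says (it concerns deletion of a $\theta$-\emph{positive} vertex), and the statement you use is false: in the path on three vertices with $\theta=0$ the two endpoints are $0$-essential, yet each becomes $0$-positive once the other is deleted --- which is exactly how $u,v\in B_{\theta}(G)$ can occur as an edge of $D_{0,\theta}(G)$. Since $u,v\in B_{\theta}(G)$ is an allowed outcome this slip does not produce a wrong conclusion, but the reasoning resting on it should be discarded; what the $u\in B_{\theta}(G)$ case really requires is, once more, only the exclusion of $v\in N_{\theta}(G)$, i.e.\ the same application of Lemma \ref{Ku-Chen-neutral}(b) as above.
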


\begin {thm}\label{d_graph_0_neutral} Let $G$ be a graph with $\m(\theta, G)\ge 2$ and  $H_1,\dots, H_q,Q_1,\dots, Q_m$ be all the components in $G\setminus A_{\theta}(G)$ with $H_i$ is $\theta$-critical for all $i$ and $\textnormal {mult} (\theta, Q_j)=0$ for all $j$. Then  $(u,v)\in E(D_{0,\theta}(S_{\theta}(G)))$ if and only if
\begin {itemize}
\item [(a)] $u\in P_{\theta}(G)\cup A_{\theta}(G)$ and $v\in B_{\theta}(G)$, or
\item [(b)] $u,v\in N_{\theta}(G)$ with $u\in V(Q_{j_1})$ and $v\in V(Q_{j_2})$ for some $j_1$ and $j_2$, $j_1\neq j_2$, or
\item [(c)] $(u,v)\in E(D_{0,\theta}(H_{i_0}))$ for some $i_0$, or
\item [(d)] $(u,v)\in E(D_{0,\theta}(Q_{j_0}))$ for some $j_0$.
\end {itemize}
\end {thm}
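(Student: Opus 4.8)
The plan is to read off $\m(\theta, S_{\theta}(G)\setminus uv)$ directly from the Gallai--Edmonds data and compare it with $\m(\theta, S_{\theta}(G))$, since $(u,v)\in E(D_{0,\theta}(S_{\theta}(G)))$ means precisely that $\m(\theta, S_{\theta}(G)\setminus uv)=\m(\theta, S_{\theta}(G))$. First I would invoke Corollary \ref{similar_gallai_edmond} to transfer everything to $S_{\theta}(G)$: it gives $\m(\theta, S_{\theta}(G))=\m(\theta,G)=:k\geq 2$ together with $B_{\theta}(S_{\theta}(G))=B_{\theta}(G)$, $P_{\theta}(S_{\theta}(G))=P_{\theta}(G)$, $N_{\theta}(S_{\theta}(G))=N_{\theta}(G)$, $A_{\theta}(S_{\theta}(G))=A_{\theta}(G)$, and since $S_{\theta}(G)\setminus A_{\theta}(S_{\theta}(G))=G\setminus A_{\theta}(G)$ the components $H_1,\dots,H_q,Q_1,\dots,Q_m$ are the same. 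I would also recall that the $H_i$ exhaust $B_{\theta}(G)$ (Corollary \ref{P:C7}(iv)), that each $Q_j$ has $\m(\theta,Q_j)=0$ and hence contains only $\theta$-positive and $\theta$-neutral vertices, and that the type of a vertex of $Q_j$ computed inside $Q_j$ agrees with its type in $G$ (by part (a) of Theorem \ref{basic_property} and the stability lemma). The whole argument is then a case analysis on where $u$ and $v$ lie, using Lemma \ref{BP:L4a}(d) whenever one of them is $\theta$-special and the appropriate part of Corollary \ref{S:C6} in all remaining cases.

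For sufficiency I would check (a)--(d) one at a time. In (a), if the vertex in $P_{\theta}\cup A_{\theta}$ is special, then Lemma \ref{BP:L4a}(d)(iii) gives $\m(\theta,S_{\theta}(G)\setminus uv)=k$; if instead it is $\theta$-positive, it lies in some $Q_{j_0}$ with $\m(\theta,Q_{j_0}\setminus u)=1$, and Corollary \ref{S:C6}(a) yields $k-1+1=k$. In (b), both vertices are $\theta$-neutral in distinct $Q$'s, each contributing $0$, so Corollary \ref{S:C6}(c) gives $k$. In (c), $(u,v)\in E(D_{0,\theta}(H_{i_0}))$ means $\m(\theta,H_{i_0}\setminus uv)=\m(\theta,H_{i_0})=1$ (the latter by Theorem \ref{P:T6}), so Corollary \ref{S:C6}(d) gives $k-1+1=k$. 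In (d), $(u,v)\in E(D_{0,\theta}(Q_{j_0}))$ means $\m(\theta,Q_{j_0}\setminus uv)=0$, so Corollary \ref{S:C6}(b) gives $k+0=k$. Thus each of (a)--(d) produces an edge.

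For necessity, suppose $\m(\theta,S_{\theta}(G)\setminus uv)=k$. I would first apply Lemma \ref{BP:L13} to $S_{\theta}(G)$ to cut the possibilities down to three configurations: (i) one of $u,v$ in $B_{\theta}$ and the other in $P_{\theta}\cup A_{\theta}$; (ii) $u,v\in B_{\theta}$; (iii) $u,v\in P_{\theta}\cup N_{\theta}$. Configuration (i) is literally case (a). In (ii) both vertices lie in $H$-components: Corollary \ref{S:C6}(e) forces $k-2$ if they lie in different components, contradicting that $(u,v)$ is an edge, so they share a component $H_{i_0}$, and Corollary \ref{S:C6}(d) gives $\m(\theta,H_{i_0}\setminus uv)=1=\m(\theta,H_{i_0})$, i.e.\ case (c). In (iii) both lie in $Q$-components: if in the same $Q_{j_0}$, Corollary \ref{S:C6}(b) gives $\m(\theta,Q_{j_0}\setminus uv)=0=\m(\theta,Q_{j_0})$, which is case (d); if in distinct $Q_{j_1},Q_{j_2}$, Corollary \ref{S:C6}(c) forces the two nonnegative contributions $\m(\theta,Q_{j_1}\setminus u)$ and $\m(\theta,Q_{j_2}\setminus v)$ to vanish, so both $u$ and $v$ are $\theta$-neutral, which is case (b).

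The bulk of the work is bookkeeping, so I do not expect a serious obstacle; the one place demanding care is the positive/neutral dichotomy among the $Q$-vertices. One must read off from the signs in Corollary \ref{S:C6} exactly which choice yields equality with $k$: in (a) the $Q$-vertex must be $\theta$-positive (contributing $+1$ to offset the $-1$), whereas in (b) both $Q$-vertices must be $\theta$-neutral (contributing $0$), and conflating these would wrongly merge or split the cases. A secondary point to keep straight is that configuration (i) of Lemma \ref{BP:L13} already matches (a) with no multiplicity computation, while the exclusion of cross-$H$ pairs in (ii) is precisely the role of the hypothesis $\m(\theta,G)\geq 2$ through part (e) of Corollary \ref{S:C6}.
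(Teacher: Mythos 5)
Your proposal is correct and follows essentially the same route as the paper: reduce via Lemma \ref{BP:L13} and Corollary \ref{similar_gallai_edmond}, then read off $\m(\theta, S_{\theta}(G)\setminus uv)$ case by case from Corollary \ref{S:C6}, with the hypothesis $\m(\theta,G)\geq 2$ entering exactly through part (e) to exclude cross-$H$ pairs. The only cosmetic difference is in the sufficiency of (a), where the paper cites Lemma \ref{godsil_positive} directly instead of splitting into the special/positive subcases.
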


\begin {proof} Suppose (a) holds. Then it follows from Lemma \ref{godsil_positive} that $(u,v)\in E(D_{0,\theta}(S_{\theta}(G)))$.

Suppose (b) holds. By   Theorem \ref {P:T5}, $u,v\in N_{\theta}(G\setminus A_{\theta}(G))$. By using part (a) of Theorem \ref {basic_property}, it is not hard to deduce that $\textnormal {mult} (\theta, Q_{j_1}\setminus u)=0=\textnormal {mult} (\theta, Q_{j_2}\setminus v)$. Then by part (c) of Corollary \ref {S:C6}, $\textnormal {mult} (\theta, S_{\theta}(G)\setminus uv)=\textnormal {mult} (\theta, S_{\theta}(G))+\textnormal {mult} (\theta, Q_{j_1}\setminus u)+\textnormal {mult} (\theta, Q_{j_2}\setminus v)=\textnormal {mult} (\theta, S_{\theta}(G))$. Hence $(u,v)\in E(D_{0,\theta}(S_{\theta}(G)))$.

Suppose (c) holds. Then $\textnormal {mult} (\theta, H_{i_0}\setminus uv)=1$. By part (d) of Corollary \ref {S:C6}, $
\textnormal {mult} (\theta, S_{\theta}(G)\setminus uv)=\textnormal {mult} (\theta, S_{\theta}(G))-1+\textnormal {mult} (\theta, H_{i_0}\setminus uv)=\textnormal {mult} (\theta, S_{\theta}(G))$.  Hence  $(u,v)\in E(D_{0,\theta}(S_{\theta}(G)))$.

Suppose (d) holds. Then $\textnormal {mult} (\theta, Q_{j_0}\setminus uv)=0$. By part (b) of Corollary \ref {S:C6}, $
\textnormal {mult} (\theta, S_{\theta}(G)\setminus uv)=\textnormal {mult} (\theta, S_{\theta}(G))+\textnormal {mult} (\theta, Q_{j_0}\setminus uv)=\textnormal {mult} (\theta, S_{\theta}(G))$.  Hence  $(u,v)\in E(D_{0,\theta}(S_{\theta}(G)))$.

Suppose $(u,v)\in E(D_{0,\theta}(S_{\theta}(G)))$. By  Lemma \ref {BP:L13}, we may assume that $u,v\in B_{\theta}(G)$ or $u,v\in P_{\theta}(G)\cup N_{\theta}(G)$. Suppose $u,v\in B_{\theta}(G)$.  By part (d) and (e) of Corollary \ref {S:C6}, we must have $u,v\in  V(H_{i_0})$ for some $i_0$.
So $\textnormal {mult} (\theta, S_{\theta}(G))=
\textnormal {mult} (\theta, S_{\theta}(G)\setminus uv)=\textnormal {mult} (\theta, S_{\theta}(G))-1+\textnormal {mult} (\theta, H_{i_0}\setminus uv)$, which implies that $\textnormal {mult} (\theta, H_{i_0}\setminus uv)=1$. Hence  $(u,v)\in E(D_{0,\theta}(H_{i_0}))$.

Suppose $u,v\in P_{\theta}(G)\cup N_{\theta}(G)$. If $u,v\in V(Q_{j_0})$ for some $j_0$, then by part (b) of Corollary \ref {S:C6}, we have $\textnormal {mult} (\theta, S_{\theta}(G))=\textnormal {mult} (\theta, S_{\theta}(G)\setminus uv)=\textnormal {mult} (\theta, S_{\theta}(G))+\textnormal {mult} (\theta, Q_{j_0}\setminus uv)$, which implies that $\textnormal {mult} (\theta, Q_{j_0}\setminus uv)=0$, i.e., $(u,v)\in E(D_{0,\theta}(Q_{j_0}))$.

If $u\in V(Q_{j_1})$ and $v\in V(Q_{j_2})$ for some $j_1,j_2$, $j_1\neq j_2$, then by part (c) of Corollary \ref {S:C6},
$\textnormal {mult} (\theta, S_{\theta}(G))=\textnormal {mult} (\theta, S_{\theta}(G)\setminus uv)=\textnormal {mult} (\theta, S_{\theta}(G))+\textnormal {mult} (\theta, Q_{j_1}\setminus u)+\textnormal {mult} (\theta, Q_{j_2}\setminus v)$, which implies  $\textnormal {mult} (\theta, Q_{j_1}\setminus u)=0=\textnormal {mult} (\theta, Q_{j_2}\setminus v)$. By using part (a) of Theorem \ref {basic_property}, we can deduce that $u,v\in N_{\theta}(G\setminus A_{\theta}(G))$. It then follows from Theorem \ref {P:T5}, that $u, v\in N_{\theta}(G)$.
\end {proof}

\subsection{$D_{1,\theta}(G)$}

Using  Lemma \ref {BP:L4a}, one can easily deduce Lemma \ref {BP:L16}.
\begin {lm}\label {BP:L16} Let $G$ be a graph. If $(u,v)\in E(D_{1,\theta}(G))$ then either $u\in A_{\theta}(G)$ and $v\in N_{\theta}(G)$ or $u,v\in P_{\theta}(G)\cup N_{\theta}(G)$.
\end {lm}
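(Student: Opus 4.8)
The plan is to read the conclusion directly off the sharp multiplicity bounds recorded in Lemma \ref{BP:L4a}, via a short case analysis on the Gallai--Edmonds types of the two endpoints. By definition, $(u,v)\in E(D_{1,\theta}(G))$ means $\m(\theta, G\setminus uv)=\m(\theta, G)+1$, and since $D_{1,\theta}(G)$ is an undirected graph we are free to interchange the roles of $u$ and $v$. Because $V(G)=B_{\theta}(G)\cup A_{\theta}(G)\cup P_{\theta}(G)\cup N_{\theta}(G)$ is a partition, it suffices to test each possible type of an endpoint against the required value $\m(\theta,G)+1$.

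First I would eliminate the $\theta$-essential vertices. If $u\in B_{\theta}(G)$, then part (a) of Lemma \ref{BP:L4a} gives $\m(\theta, G\setminus uv)\le \m(\theta, G)$ for every $v$, which is incompatible with the value $\m(\theta, G)+1$ demanded by the edge relation. Hence neither $u$ nor $v$ is $\theta$-essential, so both endpoints lie in $A_{\theta}(G)\cup P_{\theta}(G)\cup N_{\theta}(G)$.

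Next I would dispose of the $\theta$-special endpoints using part (d) of Lemma \ref{BP:L4a}, which pins down $\m(\theta, G\setminus uv)$ exactly when $u\in A_{\theta}(G)$. If $u\in A_{\theta}(G)$, then $v\in B_{\theta}(G)$ yields $\m(\theta, G\setminus uv)=\m(\theta, G)$ and $v\in P_{\theta}(G)\cup(A_{\theta}(G)\setminus\{u\})$ yields $\m(\theta, G\setminus uv)=\m(\theta, G)+2$; both are excluded, so the only surviving possibility is $v\in N_{\theta}(G)$, giving the first alternative of the statement. If instead neither endpoint is $\theta$-special, then by the previous paragraph both $u$ and $v$ must lie in $P_{\theta}(G)\cup N_{\theta}(G)$, which is the second alternative. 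These two cases are exhaustive, completing the argument.

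The proof is essentially bookkeeping, so there is no genuine obstacle; the only point requiring a moment's care is to exploit the symmetry of the undirected edge relation, so that testing one endpoint against Lemma \ref{BP:L4a}(d) already constrains the other and one need not separately analyse every ordered pair of types.
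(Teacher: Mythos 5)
Your proof is correct and follows exactly the route the paper intends: the paper states only that Lemma \ref{BP:L16} "can easily be deduced" from Lemma \ref{BP:L4a}, and your case analysis on the Gallai--Edmonds types of the endpoints (ruling out $B_{\theta}(G)$ via part (a) and pinning down the $A_{\theta}(G)$ case via part (d), using the symmetry of the edge relation) is precisely that deduction, carried out in full.
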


\begin {thm}\label{d_graph_1_positive} Let $G$ be a graph and  $Q_1,\dots, Q_m$ be all the components in $G\setminus A_{\theta}(G)$ with \linebreak $\textnormal {mult} (\theta, Q_j)=0$ for all $j$. Then  $(u,v)\in E(D_{1,\theta}(S_{\theta}(G)))$ if and only if
\begin {itemize}
\item [(a)] $u\in  A_{\theta}(G)$ and $v\in N_{\theta}(G)$, or
\item [(b)] $u\in P_{\theta}(G)$ and $v\in N_{\theta}(G)$ with $u\in V(Q_{j_1})$ and $v\in V(Q_{j_2})$ for some $j_1$ and $j_2$, $j_1\neq j_2$, or
\item [(c)] $(u,v)\in E(D_{1,\theta}(Q_{j_0}))$ for some $j_0$.
\end {itemize}
\end {thm}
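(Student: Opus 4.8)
The plan is to prove both implications by reducing every case to the multiplicity formulas of Corollary \ref{S:C6}, in direct parallel to the proofs of Theorems \ref{d_graph_1_negative} and \ref{d_graph_0_neutral}. Throughout I use that $(u,v)\in E(D_{1,\theta}(S_{\theta}(G)))$ means precisely $\m(\theta, S_{\theta}(G)\setminus uv)=\m(\theta, S_{\theta}(G))+1$, and that by Corollary \ref{similar_gallai_edmond} the Gallai-Edmonds classes $A_{\theta},B_{\theta},P_{\theta},N_{\theta}$ of $S_{\theta}(G)$ coincide with those of $G$, while $S_{\theta}(G)\setminus A_{\theta}(S_{\theta}(G))=G\setminus A_{\theta}(G)$ has the same components $H_1,\dots,H_q,Q_1,\dots,Q_m$. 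Before the case analysis I would record one auxiliary observation: if $w\in V(Q_j)$ (so $\m(\theta, Q_j)=0$), then $w$ is $\theta$-positive in $Q_j$ exactly when $w\in P_{\theta}(G)$ and $\theta$-neutral in $Q_j$ exactly when $w\in N_{\theta}(G)$; this follows from $P_{\theta}(G\setminus A_{\theta}(G))=P_{\theta}(G)$ and $N_{\theta}(G\setminus A_{\theta}(G))=N_{\theta}(G)$ (Corollary \ref{P:C7}(i)) together with the multiplicativity of $\mu$ over disjoint components (Theorem \ref{basic_property}(a)). Consequently $\m(\theta, Q_j\setminus w)=1$ if $w\in P_{\theta}(G)$ and $\m(\theta, Q_j\setminus w)=0$ if $w\in N_{\theta}(G)$.

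For the \emph{sufficiency} direction I treat the three cases separately. In case (a), with $u\in A_{\theta}(G)$ and $v\in N_{\theta}(G)$, I pass to $S_{\theta}(G)$ via Corollary \ref{similar_gallai_edmond} and apply part (d)(i) of Lemma \ref{BP:L4a} directly to conclude $\m(\theta, S_{\theta}(G)\setminus uv)=\m(\theta, S_{\theta}(G))+1$. In case (b), where $u\in P_{\theta}(G)\cap V(Q_{j_1})$ and $v\in N_{\theta}(G)\cap V(Q_{j_2})$ with $j_1\neq j_2$, part (c) of Corollary \ref{S:C6} gives $\m(\theta, S_{\theta}(G)\setminus uv)=\m(\theta, S_{\theta}(G))+\m(\theta,Q_{j_1}\setminus u)+\m(\theta,Q_{j_2}\setminus v)$, and the auxiliary observation makes the two summands $1$ and $0$, as required. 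In case (c), where $(u,v)\in E(D_{1,\theta}(Q_{j_0}))$ so that $\m(\theta,Q_{j_0}\setminus uv)=1$, part (b) of Corollary \ref{S:C6} yields $\m(\theta, S_{\theta}(G)\setminus uv)=\m(\theta, S_{\theta}(G))+1$.

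For the \emph{necessity} direction I start from Lemma \ref{BP:L16} applied to $S_{\theta}(G)$: an edge of $D_{1,\theta}(S_{\theta}(G))$ forces either $u\in A_{\theta}$ and $v\in N_{\theta}$, which is case (a) after pulling back to $G$, or $u,v\in P_{\theta}\cup N_{\theta}$. In the latter situation both vertices must lie in the trivial components $Q_j$, since the $\theta$-critical components $H_i$ consist entirely of $\theta$-essential vertices (Corollary \ref{P:C7}(iv)). If $u,v\in V(Q_{j_0})$, then part (b) of Corollary \ref{S:C6} forces $\m(\theta,Q_{j_0}\setminus uv)=1$, i.e.\ case (c). If $u\in V(Q_{j_1})$ and $v\in V(Q_{j_2})$ with $j_1\neq j_2$, then part (c) of Corollary \ref{S:C6} forces $\m(\theta,Q_{j_1}\setminus u)+\m(\theta,Q_{j_2}\setminus v)=1$, which is case (b) after labeling the positive vertex $u$.

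The one point requiring care — and the step I would check most carefully — is the cross-component subcase of the converse: I must confirm that each of $\m(\theta,Q_{j_1}\setminus u)$ and $\m(\theta,Q_{j_2}\setminus v)$ can only equal $0$ or $1$. This is where the hypothesis $\m(\theta, Q_j)=0$ is essential, since it rules out $\theta$-essential vertices (every vertex of a $Q_j$ is $\theta$-neutral or $\theta$-positive) and hence the value $-1$; with both summands in $\{0,1\}$, their sum equalling $1$ pins down exactly one $\theta$-positive and one $\theta$-neutral vertex, matching case (b) via the auxiliary observation. Everything else is a routine substitution into the appropriate part of Corollary \ref{S:C6}, so I anticipate no genuinely hard step; the work lies entirely in invoking the correct part of Corollary \ref{S:C6} in each case and keeping the decomposition classes straight.
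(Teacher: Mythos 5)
Your proposal is correct and follows essentially the same route as the paper's own proof: sufficiency via Lemma \ref{BP:L4a}(d) and parts (b), (c) of Corollary \ref{S:C6}, and necessity via Lemma \ref{BP:L16} followed by the same two subcases of Corollary \ref{S:C6}. Your extra care in noting that $\m(\theta,Q_{j}\setminus w)\in\{0,1\}$ simply makes explicit what the paper's ``without loss of generality'' step relies on.
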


\begin {proof} Suppose (a) holds. Then it follows from Lemma \ref {BP:L4a} that $(u,v)\in E(D_{1,\theta}(S_{\theta}(G)))$.

Suppose (b) holds. By  Theorem \ref {P:T5}, $u\in P_{\theta}(G\setminus A_{\theta}(G))$ and $v\in N_{\theta}(G\setminus A_{\theta}(G))$. By using part (a) of Theorem \ref {basic_property}, we  deduce that $\textnormal {mult} (\theta, Q_{j_1}\setminus u)=1$ and $\textnormal {mult} (\theta, Q_{j_2}\setminus v)=0$. Then by part (c) of Corollary \ref {S:C6}, $\textnormal {mult} (\theta, S_{\theta}(G)\setminus uv)=\textnormal {mult} (\theta, S_{\theta}(G))+\textnormal {mult} (\theta, Q_{j_1}\setminus u)+\textnormal {mult} (\theta, Q_{j_2}\setminus v)=\textnormal {mult} (\theta, S_{\theta}(G))+1$. Hence $(u,v)\in E(D_{1,\theta}(S_{\theta}(G)))$.

Suppose (c) holds. Then $\textnormal {mult} (\theta, Q_{j_0}\setminus uv)=1$. By part (b) of Corollary \ref {S:C6}, $
\textnormal {mult} (\theta, S_{\theta}(G)\setminus uv)=\textnormal {mult} (\theta, S_{\theta}(G))+\textnormal {mult} (\theta, Q_{j_0}\setminus uv)=\textnormal {mult} (\theta, S_{\theta}(G))+1$.  Hence  $(u,v)\in E(D_{1,\theta}(S_{\theta}(G)))$.

Suppose $(u,v)\in E(D_{1,\theta}(S_{\theta}(G)))$. By  Lemma \ref {BP:L16}, we may assume that  $u,v\in P_{\theta}(G)\cup N_{\theta}(G)$.  If $u,v\in V(Q_{j_0})$ for some $j_0$, then by part (b) of Corollary \ref {S:C6},  $\textnormal {mult} (\theta, S_{\theta}(G))+1=\textnormal {mult} (\theta, S_{\theta}(G)\setminus uv)=\textnormal {mult} (\theta, S_{\theta}(G))+\textnormal {mult} (\theta, Q_{j_0}\setminus uv)$, which implies that $\textnormal {mult} (\theta, Q_{j_0}\setminus uv)=1$, i.e., $(u,v)\in E(D_{1,\theta}(Q_{j_0}))$.

If $u\in V(Q_{j_1})$ and $v\in V(Q_{j_2})$ for some $j_1,j_2$, $j_1\neq j_2$, then by part (c) of Corollary \ref {S:C6},
$\textnormal {mult} (\theta, S_{\theta}(G))+1=\textnormal {mult} (\theta, S_{\theta}(G)\setminus uv)=\textnormal {mult} (\theta, S_{\theta}(G))+\textnormal {mult} (\theta, Q_{j_1}\setminus u)+\textnormal {mult} (\theta, Q_{j_2}\setminus v)$, which implies (without loss of generality) $\textnormal {mult} (\theta, Q_{j_1}\setminus u)=1$ and $\textnormal {mult} (\theta, Q_{j_2}\setminus v)=0$. By using part (a) of Theorem \ref {basic_property} again, we can deduce that $u\in P_{\theta}(G\setminus A_{\theta}(G))$ and $v\in N_{\theta}(G\setminus A_{\theta}(G))$. It then follows from Theorem \ref {P:T5}, that $u\in P_{\theta}(G)$ and $v\in N_{\theta}(G)$.
\end {proof}

\subsection{$D_{2,\theta}(G)$}

Using  Lemma \ref {BP:L4a}, one can easily deduce Lemma \ref {BP:L19}.
\begin {lm}\label {BP:L19} Let $G$ be a graph. If $(u,v)\in E(D_{2,\theta}(G))$ then either $u,v\in A_{\theta}(G)$ or $u\in A_{\theta}(G)$ and $v\in P_{\theta}(G)$ or $u,v\in P_{\theta}(G)$.
\end {lm}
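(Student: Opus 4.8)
The plan is to read off the conclusion directly from the one-sided multiplicity bounds recorded in Lemma \ref{BP:L4a}, together with the fact that $V(G)=B_{\theta}(G)\cup A_{\theta}(G)\cup P_{\theta}(G)\cup N_{\theta}(G)$ is a partition. By definition, $(u,v)\in E(D_{2,\theta}(G))$ means precisely that $\m(\theta,G\setminus uv)=\m(\theta,G)+2$. The strategy is to show that neither endpoint can lie in $B_{\theta}(G)$ or in $N_{\theta}(G)$; since these four sets exhaust $V(G)$, this forces $u,v\in A_{\theta}(G)\cup P_{\theta}(G)$, which is exactly the stated trichotomy (both in $A_{\theta}(G)$; one in $A_{\theta}(G)$ and one in $P_{\theta}(G)$; or both in $P_{\theta}(G)$).

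First I would analyze the endpoint $u$. If $u\in B_{\theta}(G)$, then part (a) of Lemma \ref{BP:L4a} gives $\m(\theta,G\setminus uv)\le \m(\theta,G)$, which contradicts $\m(\theta,G\setminus uv)=\m(\theta,G)+2$; hence $u\notin B_{\theta}(G)$. If instead $u\in N_{\theta}(G)$, then part (c) of the same lemma yields the upper bound $\m(\theta,G\setminus uv)\le \m(\theta,G)+1$, again contradicting the edge condition; hence $u\notin N_{\theta}(G)$. Therefore $u\in A_{\theta}(G)\cup P_{\theta}(G)$.

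Since $G\setminus uv=G\setminus vu$, the quantity $\m(\theta,G\setminus uv)$ is symmetric in $u$ and $v$, so the identical case-elimination applied with $v$ in place of $u$ gives $v\in A_{\theta}(G)\cup P_{\theta}(G)$. Combining the two memberships leaves exactly the three listed possibilities, namely $u,v\in A_{\theta}(G)$; $u\in A_{\theta}(G)$ and $v\in P_{\theta}(G)$ (up to relabeling the two endpoints); or $u,v\in P_{\theta}(G)$.

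There is essentially no obstacle here: the statement is an immediate consequence of the one-sided bounds in Lemma \ref{BP:L4a}, and the only point worth stating carefully is that ruling out $B_{\theta}(G)$ and $N_{\theta}(G)$ for both endpoints does indeed exhaust every alternative to the listed cases, precisely because $B_{\theta}(G),A_{\theta}(G),P_{\theta}(G),N_{\theta}(G)$ partition $V(G)$.
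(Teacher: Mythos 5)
Your proof is correct and follows exactly the route the paper intends: the paper's own justification is the one-line remark that Lemma \ref{BP:L19} follows easily from Lemma \ref{BP:L4a}, and your argument simply spells out that reading, ruling out $B_{\theta}(G)$ via part (a) and $N_{\theta}(G)$ via part (c), then invoking the partition of $V(G)$ and the symmetry of $G\setminus uv$ in $u$ and $v$. Nothing is missing.
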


\begin {thm}\label{d_graph_2_positive} Let $G$ be a graph and  $Q_1,\dots, Q_m$ be all the components in $G\setminus A_{\theta}(G)$ with \linebreak $\textnormal {mult} (\theta, Q_j)=0$ for all $j$. Then  $(u,v)\in E(D_{2,\theta}(S_{\theta}(G)))$ if and only if
\begin {itemize}
\item [(a)] $u,v\in  A_{\theta}(G)$, or
\item [(b)] $u\in A_{\theta}(G)$ and $v\in P_{\theta}(G)$, or
\item [(c)] $u,v\in P_{\theta}(G)$ with $u\in V(Q_{j_1})$ and $v\in V(Q_{j_2})$ for some $j_1$ and $j_2$, $j_1\neq j_2$, or
\item [(d)] $(u,v)\in E(D_{2,\theta}(Q_{i_0}))$ for some $i_0$.
\end {itemize}
\end {thm}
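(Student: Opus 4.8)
The plan is to establish both implications in exactly the style of the preceding theorems \ref{d_graph_1_positive} and \ref{d_graph_0_neutral}, leaning on the multiplicity formulas of Corollary \ref{S:C6} together with the stability of the Gallai-Edmonds decomposition under the $S_{\theta}$-operator (Corollary \ref{similar_gallai_edmond}). Throughout I would silently use that $A_{\theta}(S_{\theta}(G))=A_{\theta}(G)$, $P_{\theta}(S_{\theta}(G))=P_{\theta}(G)$, $N_{\theta}(S_{\theta}(G))=N_{\theta}(G)$, and that the component structure of $S_{\theta}(G)\setminus A_{\theta}(S_{\theta}(G))$ equals that of $G\setminus A_{\theta}(G)$.

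For the sufficiency direction I would treat the four cases in turn. If (a) or (b) holds, then $u\in A_{\theta}(S_{\theta}(G))$ and $v\in A_{\theta}(S_{\theta}(G))\cup P_{\theta}(S_{\theta}(G))$, so part (d)(ii) of Lemma \ref{BP:L4a} gives $\m(\theta,S_{\theta}(G)\setminus uv)=\m(\theta,S_{\theta}(G))+2$ immediately. For (c), since $u,v\in P_{\theta}(G)$ lie in distinct components $Q_{j_1},Q_{j_2}$, Theorem \ref{P:T5} places them in $P_{\theta}(G\setminus A_{\theta}(G))$, and then the product formula (part (a) of Theorem \ref{basic_property}) forces $u$ to be $\theta$-positive inside $Q_{j_1}$ and $v$ inside $Q_{j_2}$; as $\m(\theta,Q_{j})=0$ this yields $\m(\theta,Q_{j_1}\setminus u)=1=\m(\theta,Q_{j_2}\setminus v)$, whence part (c) of Corollary \ref{S:C6} delivers the $+2$. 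For (d), $(u,v)\in E(D_{2,\theta}(Q_{i_0}))$ means $\m(\theta,Q_{i_0}\setminus uv)=2$, and part (b) of Corollary \ref{S:C6} again gives the $+2$.

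For the necessity direction I would first invoke Lemma \ref{BP:L19} applied to $S_{\theta}(G)$ to reduce to three possibilities for the Gallai-Edmonds types of $u,v$: the cases $u,v\in A_{\theta}(G)$ and $u\in A_{\theta}(G),v\in P_{\theta}(G)$ are precisely (a) and (b), so I may assume $u,v\in P_{\theta}(G)$. By part (iv) of Corollary \ref{P:C7} the $\theta$-critical components are induced by $B_{\theta}(G)$, so $u,v$ must both lie among the $Q_{j}$'s. If they share a component $Q_{j_0}$, then Corollary \ref{S:C6}(b) forces $\m(\theta,Q_{j_0}\setminus uv)=2$, giving (d); if they lie in distinct $Q_{j_1},Q_{j_2}$, then Corollary \ref{S:C6}(c) forces $\m(\theta,Q_{j_1}\setminus u)+\m(\theta,Q_{j_2}\setminus v)=2$, which is automatically consistent since positivity of $u,v$ already pins each summand at $1$, giving (c).

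I do not expect a genuine obstacle here; the argument is a bookkeeping exercise over the disjoint cases. The most delicate point is the translation between $G$ and $S_{\theta}(G)$: one must consistently apply Corollary \ref{similar_gallai_edmond} so that the hypotheses phrased in terms of $A_{\theta}(G),P_{\theta}(G)$ and the components of $G\setminus A_{\theta}(G)$ are legitimately fed into the formulas of Corollary \ref{S:C6}, which are stated for $S_{\theta}(G)$. The only substantive verification is that $\theta$-positivity of $u$ and $v$ descends from $G\setminus A_{\theta}(G)$ to the individual components $Q_{j}$ via the multiplicativity in Theorem \ref{basic_property}(a), since that is what fixes the component multiplicities at exactly $1$ and thereby closes the two-component subcase.
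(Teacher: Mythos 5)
Your proposal is correct and follows essentially the same route as the paper: sufficiency via Lemma \ref{BP:L4a}(d)(ii) for cases (a)--(b) and Corollary \ref{S:C6}(b),(c) combined with Theorem \ref{basic_property}(a) for cases (c)--(d), and necessity via Lemma \ref{BP:L19} applied to $S_{\theta}(G)$ followed by the same two-case split on whether $u,v$ share a component $Q_{j}$. The only cosmetic difference is that you observe the distinct-component subcase of necessity is immediate once $u,v\in P_{\theta}(G)$ is known, whereas the paper redundantly re-derives the positivity from the multiplicity count; both are fine.
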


\begin {proof} Suppose (a) or (b) holds. Then it follows from Lemma \ref {BP:L4a} that $(u,v)\in E(D_{2,\theta}(S_{\theta}(G)))$.

Suppose (c) holds. By Theorem \ref {P:T5}, $u,v\in P_{\theta}(G\setminus A_{\theta}(G))$. By using part (a) of Theorem \ref {basic_property}, we  deduce that $\textnormal {mult} (\theta, Q_{j_1}\setminus u)=1=\textnormal {mult} (\theta, Q_{j_2}\setminus v)$. Then by part (c) of Corollary \ref {S:C6}, we have $\textnormal {mult} (\theta, S_{\theta}(G)\setminus uv)=\textnormal {mult} (\theta, S_{\theta}(G))+\textnormal {mult} (\theta, Q_{j_1}\setminus u)+\textnormal {mult} (\theta, Q_{j_2}\setminus v)=\textnormal {mult} (\theta, S_{\theta}(G))+2$. Hence $(u,v)\in E(D_{2,\theta}(S_{\theta}(G)))$.

Suppose (d) holds. Then $\textnormal {mult} (\theta, Q_{j_0}\setminus uv)=2$. By part (b) of Corollary \ref {S:C6}, $
\textnormal {mult} (\theta, S_{\theta}(G)\setminus uv)=\textnormal {mult} (\theta, S_{\theta}(G))+\textnormal {mult} (\theta, Q_{j_0}\setminus uv)=\textnormal {mult} (\theta, S_{\theta}(G))+2$.  Hence  $(u,v)\in E(D_{2,\theta}(S_{\theta}(G)))$.

Suppose $(u,v)\in E(D_{2,\theta}(S_{\theta}(G)))$. By  Lemma \ref {BP:L19}, we may assume that  $u,v\in P_{\theta}(G)$.  If $u,v\in V(Q_{j_0})$ for some $j_0$, then by part (b) of Corollary \ref {S:C6},  $\textnormal {mult} (\theta, S_{\theta}(G))+2=\textnormal {mult} (\theta, S_{\theta}(G)\setminus uv)=\textnormal {mult} (\theta, S_{\theta}(G))+\textnormal {mult} (\theta, Q_{j_0}\setminus uv)$, which implies that $\textnormal {mult} (\theta, Q_{j_0}\setminus uv)=2$, i.e., $(u,v)\in E(D_{2,\theta}(Q_{j_0}))$.

If $u\in V(Q_{j_1})$ and $v\in V(Q_{j_2})$ for some $j_1,j_2$, $j_1\neq j_2$, then by part (c) of Corollary \ref {S:C6},
$\textnormal {mult} (\theta, S_{\theta}(G))+2=\textnormal {mult} (\theta, S_{\theta}(G)\setminus uv)=\textnormal {mult} (\theta, S_{\theta}(G))+\textnormal {mult} (\theta, Q_{j_1}\setminus u)+\textnormal {mult} (\theta, Q_{j_2}\setminus v)$, which implies that $\textnormal {mult} (\theta, Q_{j_1}\setminus u)=1=\textnormal {mult} (\theta, Q_{j_2}\setminus v)$. As before using part (a) of Theorem \ref {basic_property}, we deduce that $u,v\in P_{\theta}(G\setminus A_{\theta}(G))$. It then follows from Theorem \ref {P:T5}, that $u,v\in P_{\theta}(G)$.
\end {proof}

Now let us look at the case $\theta=0$. Note that we have $N_{0}(G)=\varnothing$ for any graph $G$. By Lemma \ref {interlacing}, we have $\textnormal {mult} (0,G\setminus uv)=-2,0,2$ for all $u,v\in V(G)$. Hence,
\begin {thm}\label {BP:T22} Let $G$ be a graph. Then $D_{-1,0}(G)$ and $D_{1,0}(G)$ are  empty graphs.
\end {thm}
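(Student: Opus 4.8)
The plan is to exploit the feature of the root $\theta=0$ already recorded in the lead-up paragraph, namely that $N_0(H)=\varnothing$ for every graph $H$. This single fact forces every vertex deletion to change $\m(0,\cdot)$ by exactly $\pm 1$, which immediately rules out a net change of $\pm 1$ after two deletions, and the theorem then drops out of the definitions of $D_{-1,0}(G)$ and $D_{1,0}(G)$.

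First I would recall that, since there are no $0$-neutral vertices, every vertex $w$ of any graph $H$ is either $0$-essential or $0$-positive, so in either case $\m(0,H\setminus w)=\m(0,H)\pm 1$. Applying this twice---deleting $u$ from $G$ and then $v$ from $G\setminus u$, which is legitimate because $N_0(G\setminus u)=\varnothing$ as well---I obtain
\[
\m(0,G\setminus uv)-\m(0,G)=(\pm 1)+(\pm 1)\in\{-2,0,2\}
\]
for every pair of distinct vertices $u,v\in V(G)$. Equivalently, one may argue by parity: all exponents appearing in $\mu(G,x)$ share the parity of $|V(G)|$, so $\m(0,G)\equiv|V(G)|\pmod 2$, and since $G\setminus uv$ has $|V(G)|-2$ vertices the two multiplicities are congruent modulo $2$; combined with interlacing (Lemma \ref{interlacing}) applied twice this again confines the difference to $\{-2,0,2\}$.

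Finally I would read off the conclusion directly from Definition \ref{BP:D3}: an edge of $D_{-1,0}(G)$ would require $\m(0,G\setminus uv)=\m(0,G)-1$ and an edge of $D_{1,0}(G)$ would require $\m(0,G\setminus uv)=\m(0,G)+1$, but the computation above shows that the difference $\m(0,G\setminus uv)-\m(0,G)$ is never odd. Hence neither graph contains any edge, so both $D_{-1,0}(G)$ and $D_{1,0}(G)$ are empty graphs on $|V(G)|$ vertices.

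There is essentially no obstacle in this argument; the only point deserving care is the justification that $N_0(H)=\varnothing$ for \emph{all} the graphs involved, in particular for the intermediate graph $G\setminus u$, so that the two-step deletion argument is valid. Since this is exactly the general fact about the root $\theta=0$ already observed earlier in the paper, it may simply be invoked rather than reproved.
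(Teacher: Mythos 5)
Your proposal is correct and follows essentially the same route as the paper: both arguments rest on the fact that $N_0(H)=\varnothing$ for every graph $H$, so each vertex deletion changes $\m(0,\cdot)$ by exactly $\pm 1$ and two deletions can only produce a difference in $\{-2,0,2\}$, which by Definition \ref{BP:D3} leaves $D_{-1,0}(G)$ and $D_{1,0}(G)$ edgeless. Your additional parity remark and the explicit note that the intermediate graph $G\setminus u$ also has no $0$-neutral vertices are just slightly more careful spellings-out of what the paper leaves implicit.
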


%-----------------------------------------------

Now let us determine the edges of $D_{\theta}(G)$. We shall begin with the following lemma.

\begin {lm}\label {R:L3} Let $G$ be a graph and  $u\in P_{\theta}(G)\cup N_{\theta}(G)$. Then $A_{\theta}(G)\subseteq A_{\theta}(G\setminus u)$.
\end {lm}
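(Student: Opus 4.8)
The plan is to show that every $\theta$-special vertex $w$ of $G$ remains $\theta$-special in $G\setminus u$, i.e.\ that $w$ is non-$\theta$-essential in $G\setminus u$ and still has a $\theta$-essential neighbour there. If $A_{\theta}(G)=\varnothing$ there is nothing to prove, so I assume $A_{\theta}(G)\neq\varnothing$; then $\theta$-essential vertices exist and $\m(\theta,G)\geq 1$.

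The naive route is to push the status changes through Lemma \ref{godsil_positive} and Lemma \ref{Ku-Chen-neutral}. The essential-neighbour half is easy this way: if $w$ is adjacent to a $\theta$-essential vertex $x$ in $G$, then $x$ stays $\theta$-essential in $G\setminus u$ by Lemma \ref{godsil_positive}(a) (when $u$ is $\theta$-positive) or Lemma \ref{Ku-Chen-neutral}(b) (when $u$ is $\theta$-neutral), and the edge $wx$ survives since $u\neq w,x$. The obstruction is the other requirement, that $w$ not become $\theta$-essential in $G\setminus u$. For $\theta$-neutral $u$ this is immediate from Lemma \ref{Ku-Chen-neutral}(a), since a $\theta$-positive vertex stays $\theta$-positive or $\theta$-neutral. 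But for $\theta$-positive $u$, Lemma \ref{godsil_positive}(b) only guarantees that $w$ is $\theta$-essential or $\theta$-positive in $G\setminus u$, and it genuinely allows the $\theta$-essential branch; this is the main difficulty, and it cannot be resolved by tracking single vertices in isolation.

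To get around it I would argue globally that the whole set $A_{\theta}(G)$ is a $\theta$-extreme set in $G\setminus u$. Since $u\in P_{\theta}(G)\cup N_{\theta}(G)$ we have $u\notin A_{\theta}(G)$, so by Corollary \ref{P:C7}(i) the vertex $u$ retains its type in $G\setminus A_{\theta}(G)$ and, being non-$\theta$-essential, lies in one of the components $Q_{j_{0}}$ with $\m(\theta,Q_{j_{0}})=0$; moreover $\m(\theta,Q_{j_{0}}\setminus u)=1$ if $u$ is $\theta$-positive and $\m(\theta,Q_{j_{0}}\setminus u)=0$ if $u$ is $\theta$-neutral. Using additivity of $\m(\theta,\cdot)$ over components (Theorem \ref{basic_property}(a)) together with the count $\m(\theta,G\setminus A_{\theta}(G))=\m(\theta,G)+\vert A_{\theta}(G)\vert$ from Corollary \ref{P:C7}, a short evaluation of $\m(\theta,(G\setminus u)\setminus A_{\theta}(G))=\m(\theta,(G\setminus A_{\theta}(G))\setminus u)$ yields $\m(\theta,(G\setminus u)\setminus A_{\theta}(G))=\m(\theta,G\setminus u)+\vert A_{\theta}(G)\vert$ in both cases.

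Finally, any set meeting the extremal bound $\m(\theta,G'\setminus X)=\m(\theta,G')+\vert X\vert$ must consist entirely of $\theta$-positive vertices of $G'$: for any $a\in X$, deleting the remaining $\vert X\vert-1$ vertices changes the multiplicity by at most $\vert X\vert-1$ (Lemma \ref{interlacing}), which forces $\m(\theta,G'\setminus a)=\m(\theta,G')+1$ (this is also recorded in Proposition \ref{one_side} and Definition \ref{nice_set_independent}). Applying this with $G'=G\setminus u$ and $X=A_{\theta}(G)$ shows every $w\in A_{\theta}(G)$ is $\theta$-positive, hence non-$\theta$-essential, in $G\setminus u$. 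Combined with the essential-neighbour step above, this gives $w\in A_{\theta}(G\setminus u)$ and therefore $A_{\theta}(G)\subseteq A_{\theta}(G\setminus u)$. I expect the extreme-set identification to be the crux of the argument; once $A_{\theta}(G)$ is recognized as a $\theta$-extreme set in $G\setminus u$, the remainder is routine.
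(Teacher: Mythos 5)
Your proof is correct, but it takes a genuinely different route from the paper's. The paper dispatches the hard half (that $w\in A_{\theta}(G)$ does not become $\theta$-essential in $G\setminus u$) with a two-line trick you did not use: delete $w$ \emph{first}. Since $w$ is $\theta$-special, the $\theta$-Stability Lemma (Theorem \ref{P:T5}) gives $P_{\theta}(G\setminus w)=P_{\theta}(G)$ and $N_{\theta}(G\setminus w)=N_{\theta}(G)$, so $\m(\theta,G\setminus wu)=\m(\theta,G)+2$ when $u\in P_{\theta}(G)$ and $\m(\theta,G)+1$ when $u\in N_{\theta}(G)$; in either case this equals $\m(\theta,G\setminus u)+1$, and by commutativity of vertex deletion $w$ is therefore $\theta$-positive in $G\setminus u$. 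The essential-neighbour half is identical in both arguments. Your replacement for the hard half --- locating $u$ in a non-critical component $Q_{j_{0}}$ of $G\setminus A_{\theta}(G)$ via Corollary \ref{P:C7}, computing $\m(\theta,(G\setminus u)\setminus A_{\theta}(G))$ by additivity over components to conclude that $A_{\theta}(G)$ is a $\theta$-extreme set in $G\setminus u$, and then invoking interlacing to force every vertex of a $\theta$-extreme set to be $\theta$-positive --- is sound in every step, and it buys a strictly stronger intermediate statement that is very close in spirit to Theorem \ref{extreme_in_G}, which the paper proves immediately after this lemma (indeed your computation is essentially the one in Corollary \ref{S:C3b}). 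The cost is that you deploy the full Gallai--Edmonds component machinery where a single application of the Stability Lemma to $w$ suffices; the paper's observation that one may reorder the deletions so that the special vertex goes first is the economical move your write-up circles around but does not find.
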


\begin {proof} Let $w\in A_{\theta}(G)$. Then by Theorem \ref {P:T5}, $\textnormal {mult} (\theta, G\setminus wu)=\textnormal {mult} (\theta,G)+2$ or $\textnormal {mult} (\theta,G)+1$, depending on whether $u\in P_{\theta}(G)$ or $u\in N_{\theta}(G)$. In either cases, $w\notin B_{\theta}(G\setminus u)$. Let $z\in B_{\theta}(G)$ be adjacent to $w$. By Lemma \ref{godsil_positive} and Lemma \ref{Ku-Chen-neutral}, we have, $z\in B_{\theta}(G\setminus u)$. This implies that $w\in A_{\theta}(G\setminus u)$ and $A_{\theta}(G)\subseteq A_{\theta}(G\setminus u)$.
\end {proof}

\begin {thm}\label{extreme_in_G} Let $G$ be a graph and  $u,v\in P_{\theta}(G)\cup N_{\theta}(G)$. Then $A_{\theta}(G)$ is an $\theta$-extreme set in $G\setminus uv$.
\end {thm}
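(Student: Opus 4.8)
The plan is to show that every vertex of $A_\theta(G)$ becomes $\theta$-positive in $G\setminus uv$ and that deleting the whole set raises the multiplicity by exactly $|A_\theta(G)|$. Write $A=A_\theta(G)$ and $k=\m(\theta,G)$; the statement is vacuous when $A=\varnothing$, so assume $A\neq\varnothing$. The natural first move is to delete $u$ and then $v$ and chain Lemma \ref{R:L3}. Since $u\in P_\theta(G)\cup N_\theta(G)$, Lemma \ref{R:L3} gives $A\subseteq A_\theta(G\setminus u)$, so every $w\in A$ is $\theta$-special, hence $\theta$-positive, in $G\setminus u$. Everything then hinges on what happens to $v$ upon deleting $u$.

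First I would dispose of the easy case. If $v\notin B_\theta(G\setminus u)$, then $v\in P_\theta(G\setminus u)\cup N_\theta(G\setminus u)$, and a second application of Lemma \ref{R:L3} to the graph $G\setminus u$ yields $A\subseteq A_\theta(G\setminus u)\subseteq A_\theta(G\setminus uv)$. Once $A\subseteq A_\theta(G\setminus uv)$ is known, repeatedly invoking part (iv) of the $\theta$-Stability Lemma (Theorem \ref{P:T5}) finishes it: each vertex of $A$ is $\theta$-special, hence $\theta$-positive, and remains special after earlier deletions, so deleting them one at a time raises $\m(\theta,\cdot)$ by one at each step, giving $\m(\theta,(G\setminus uv)\setminus A)=\m(\theta,G\setminus uv)+|A|$, i.e. $A$ is $\theta$-extreme in $G\setminus uv$. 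By symmetry the same conclusion holds if $u\notin B_\theta(G\setminus v)$.

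The main obstacle is the remaining case, where $v\in B_\theta(G\setminus u)$ and $u\in B_\theta(G\setminus v)$, since then Lemma \ref{R:L3} cannot be chained: deleting either vertex turns the other into a $\theta$-essential vertex. Here the key observations are that deleting a $\theta$-neutral vertex can never create a $\theta$-essential one (Lemma \ref{Ku-Chen-neutral}), so $v\in B_\theta(G\setminus u)$ forces $u\in P_\theta(G)$, whence $\m(\theta,G\setminus u)=k+1$ and $\m(\theta,G\setminus uv)=k$; and that, since each $w\in A$ is $\theta$-positive in $G\setminus u$ while $v$ is $\theta$-essential in $G\setminus u$, Corollary \ref{delete-essential-positive} guarantees that $w$ stays $\theta$-positive in $(G\setminus u)\setminus v=G\setminus uv$. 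Thus every vertex of $A$ is $\theta$-positive in $G\setminus uv$.

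To close this case I would combine Lemma \ref{all_positive} with a trivial lower bound. Because $u,v\in P_\theta(G)\cup N_\theta(G)$, Corollary \ref{P:C7}(i) places them in the non-$\theta$-critical components of $G\setminus A$, leaving all $q=|A|+k$ of the $\theta$-critical components $H_i$ untouched inside $(G\setminus uv)\setminus A=(G\setminus A)\setminus uv$; by multiplicativity over disjoint unions (Theorem \ref{basic_property}(a)) this gives $\m(\theta,(G\setminus uv)\setminus A)\ge\sum_i\m(\theta,H_i)=q=|A|+k$. On the other hand, applying Lemma \ref{all_positive} to the $|A|$ $\theta$-positive vertices of $G\setminus uv$ shows that $\m(\theta,(G\setminus uv)\setminus A)$ is either $k+|A|$ or at most $k+|A|-2$; the lower bound rules out the second alternative, so it equals $k+|A|=\m(\theta,G\setminus uv)+|A|$, and $A$ is $\theta$-extreme in $G\setminus uv$. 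The only delicate points are verifying that the hard case genuinely forces $u$ to be $\theta$-positive and that the $H_i$ are disjoint from $\{u,v\}$; both follow cleanly from Lemma \ref{Ku-Chen-neutral}, the $\theta$-Stability Lemma, and Corollary \ref{P:C7}.
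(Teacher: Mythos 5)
There is one small hole in your easy case: from $v\notin B_\theta(G\setminus u)$ you conclude $v\in P_\theta(G\setminus u)\cup N_\theta(G\setminus u)$, but the partition $V=B_\theta\cup A_\theta\cup P_\theta\cup N_\theta$ leaves a third possibility, namely $v\in A_\theta(G\setminus u)$ (a vertex of $P_\theta(G)$ can become $\theta$-special once $u$ is removed), and Lemma \ref{R:L3} cannot be applied to $G\setminus u$ there because its hypothesis excludes special vertices. The sub-case is harmless: if $v\in A_\theta(G\setminus u)$, then part (iv) of Theorem \ref{P:T5} gives $A_\theta(G\setminus uv)=A_\theta(G\setminus u)\setminus\{v\}\supseteq A_\theta(G)$ since $v\notin A_\theta(G)$, and your concluding step (repeated use of part (iv) of the Stability Lemma) goes through unchanged. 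This is precisely the extra sub-case the paper's proof treats explicitly.

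Your hard case ($v\in B_\theta(G\setminus u)$) is correct but takes a genuinely different route from the paper's. Both arguments extract $u\in P_\theta(G)$ from Lemma \ref{Ku-Chen-neutral}, hence $\m(\theta,G\setminus uv)=\m(\theta,G)=k$. The paper then computes $\m(\theta,(G\setminus uv)\setminus A_\theta(G))$ exactly by deleting $u$ first, stripping off $A_\theta(G)\subseteq A_\theta(G\setminus u)$ via the Stability Lemma (gaining $|A_\theta(G)|$), and observing that $v$ remains $\theta$-essential throughout (losing $1$). You instead delete $u,v$ first and sandwich the multiplicity: Corollary \ref{delete-essential-positive} keeps every vertex of $A_\theta(G)$ positive in $G\setminus uv$, so Lemma \ref{all_positive} forces $\m(\theta,(G\setminus uv)\setminus A_\theta(G))$ to equal $k+|A_\theta(G)|$ or be at most $k+|A_\theta(G)|-2$, while the component count of Corollary \ref{P:C7} (the $q=k+|A_\theta(G)|$ critical components are untouched by $u,v$) supplies the matching lower bound that kills the second alternative. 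The two arguments are of comparable length; the paper's is a pure Stability-Lemma computation, whereas yours trades that for the parity dichotomy of Lemma \ref{all_positive} plus Gallai--Edmonds component counting, with the mild advantage of not having to track the status of $v$ after $A_\theta(G)$ is removed.
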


\begin {proof} By Lemma \ref {R:L3},  $A_{\theta}(G)\subseteq A_{\theta}(G\setminus u)$. If $v\in P_{\theta}(G\setminus u)\cup N_{\theta}(G\setminus u)$ then by Lemma \ref {R:L3}, $A_{\theta}(G\setminus u)\subseteq A_{\theta}(G\setminus uv)$.  If $v\in A_{\theta}(G\setminus u)$, by Theorem \ref {P:T5}, $A_{\theta}(G)\subseteq A_{\theta}(G\setminus uv)$. In either cases we have $A_{\theta}(G)$ is an $\theta$-extreme set in $G\setminus uv$.

So we may assume $v\in B_{\theta}(G\setminus u)$. Using Lemma \ref{Ku-Chen-neutral}, we deduce that $u\in P_{\theta}(G)$. So $\textnormal {mult} (\theta, G\setminus u)=\textnormal {mult} (\theta,G)+1$ and by  Theorem
\ref {P:T5}, $\textnormal {mult} (\theta, (G\setminus u)\setminus A_{\theta}(G))=\textnormal {mult} (\theta,G)+1+\vert A_{\theta}(G)\vert$. Again by Theorem \ref {P:T5}, we see that $v\in B_{\theta}((G\setminus u)\setminus A_{\theta}(G))$. Therefore
\begin {equation}
\textnormal {mult} (\theta, (G\setminus uv)\setminus A_{\theta}(G))=\textnormal {mult} (\theta,G)+\vert A_{\theta}(G)\vert.\notag
\end {equation}
Since  $\textnormal {mult} (\theta, G\setminus uv)=\textnormal {mult} (\theta,G)$, $A_{\theta}(G)$ is an $\theta$-extreme set in $G\setminus uv$.
\end {proof}

\begin {cor}\label {S:C3b}  Let $G$ be a graph. Let $Q_1,\dots, Q_m$ be all the components in $G\setminus A_{\theta}(G)$ with \linebreak $\textnormal {mult} (\theta, Q_j)=0$ for all $j$.
Then the following holds:
\begin {itemize}
\item [(a)] If $u,v\in V(Q_{j_0})$ for some $j_0$, then
\begin {equation}
\textnormal {mult} (\theta, G\setminus uv)=\textnormal {mult} (\theta, G)+\textnormal {mult} (\theta, Q_{j_0}\setminus uv).\notag
\end {equation}
\item [(b)] If $u\in V(Q_{j_1})$ and $v\in V(Q_{j_2})$ for some $j_1,j_2$, $j_1\neq j_2$, then
\begin {equation}
\textnormal {mult} (\theta, G\setminus uv)=\textnormal {mult} (\theta, G)+\textnormal {mult} (\theta, Q_{j_1}\setminus u)+\textnormal {mult} (\theta, Q_{j_2}\setminus v).\notag
\end {equation}
\end {itemize}
\end {cor}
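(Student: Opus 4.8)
The plan is to reduce both parts to a single application of Theorem \ref{extreme_in_G}, followed by a direct evaluation of the multiplicity of $\theta$ over the component decomposition of $G\setminus A_{\theta}(G)$.

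First I would pin down the status of $u$ and $v$, namely that $u,v\in P_{\theta}(G)\cup N_{\theta}(G)$ in both cases. Since $\m(\theta, Q_j)=0$ for every $j$, no vertex of any $Q_j$ can be $\theta$-essential inside $Q_j$, for deleting a vertex cannot push a nonnegative multiplicity below zero. Hence every vertex of $Q_j$ is $\theta$-neutral or $\theta$-positive in $Q_j$, and because the matching polynomial is multiplicative over components (part (a) of Theorem \ref{basic_property}), the same status holds in $G\setminus A_{\theta}(G)$. By part (i) of Corollary \ref{P:C7}, $N_{\theta}(G\setminus A_{\theta}(G))=N_{\theta}(G)$ and $P_{\theta}(G\setminus A_{\theta}(G))=P_{\theta}(G)$, so $u,v\in P_{\theta}(G)\cup N_{\theta}(G)$ for both (a) and (b). Theorem \ref{extreme_in_G} then applies, yielding that $A_{\theta}(G)$ is a $\theta$-extreme set in $G\setminus uv$, that is,
\[ \m(\theta, (G\setminus uv)\setminus A_{\theta}(G)) = \m(\theta, G\setminus uv)+\vert A_{\theta}(G)\vert. \]

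Next I would evaluate the left-hand side using $(G\setminus uv)\setminus A_{\theta}(G)=(G\setminus A_{\theta}(G))\setminus uv$ together with the decomposition $G\setminus A_{\theta}(G)=H_1\cup\cdots\cup H_q\cup Q_1\cup\cdots\cup Q_m$. By part (a) of Theorem \ref{basic_property} the multiplicity is additive over components; each $\theta$-critical $H_i$ contributes $1$ by Theorem \ref{P:T6}; there are $q=\vert A_{\theta}(G)\vert+\m(\theta, G)$ such components by part (ii) of Corollary \ref{P:C7}; and each $Q_j$ not meeting $\{u,v\}$ contributes $0$. In case (a), deleting $u,v$ only alters $Q_{j_0}$, so the left-hand side equals $\vert A_{\theta}(G)\vert+\m(\theta, G)+\m(\theta, Q_{j_0}\setminus uv)$; in case (b), it equals $\vert A_{\theta}(G)\vert+\m(\theta, G)+\m(\theta, Q_{j_1}\setminus u)+\m(\theta, Q_{j_2}\setminus v)$. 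Substituting into the extreme-set identity above and cancelling $\vert A_{\theta}(G)\vert$ gives the two claimed formulas.

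The arithmetic afterwards is pure bookkeeping, so the only step that genuinely requires care is the opening reduction: verifying $u,v\in P_{\theta}(G)\cup N_{\theta}(G)$ so that Theorem \ref{extreme_in_G} is actually applicable. Once that membership is secured via the vanishing of $\m(\theta, Q_j)$ and the stability statements in Corollary \ref{P:C7}, the remainder follows by the multiplicativity of $\mu(G,x)$ and counting $\theta$-critical components.
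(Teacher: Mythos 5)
Your proposal is correct and follows essentially the same route as the paper: apply Theorem \ref{extreme_in_G} to conclude that $A_{\theta}(G)$ is a $\theta$-extreme set in $G\setminus uv$, then compute $\textnormal{mult}(\theta, G\setminus(A_{\theta}(G)\cup\{u,v\}))$ componentwise via part (a) of Theorem \ref{basic_property}, the $\theta$-Gallai's Lemma, and Corollary \ref{P:C7}, and cancel $\vert A_{\theta}(G)\vert$. Your explicit verification that $u,v\in P_{\theta}(G)\cup N_{\theta}(G)$ (so that Theorem \ref{extreme_in_G} applies) is a point the paper leaves implicit, and it is argued correctly.
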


\begin {proof} (a) Suppose $u,v\in V(Q_{j_0})$ for some $j_0$. By  Theorem \ref {extreme_in_G}, $A_{\theta}(G)$ is an $\theta$-extreme set in $G\setminus uv$. Therefore
\begin {equation}
\textnormal {mult} (\theta, G\setminus (A_{\theta}(G)\cup \{u,v\}))=\textnormal {mult} (\theta, G\setminus uv)+\vert A_{\theta}(G)\vert.\notag
\end {equation}

Let $H_1,\dots, H_q$ be all the $\theta$-critical components in $G\setminus A_{\theta}(G)$.  By  Corollary \ref {P:C7}, and part (a) of Theorem \ref {basic_property}, we have
 \begin {align}
\textnormal {mult} (\theta, G\setminus (A_{\theta}(G)\cup \{u,v\})) &=\textnormal {mult} (\theta, Q_{j_0}\setminus uv)+\sum_{1\leq i\leq q} \textnormal {mult} (\theta, H_i)\notag\\
&=\textnormal {mult} (\theta, Q_{j_0}\setminus uv)+\textnormal {mult} (\theta, G)+\vert A_{\theta}(G)\vert.\notag
\end {align}
 This implies that $\textnormal {mult} (\theta, G\setminus uv)=\textnormal {mult} (\theta, G)+\textnormal {mult} (\theta, Q_{j_0}\setminus uv)$.

(b) is proved similarly.
\end {proof}

\begin {thm}\label{d_graph_for_G} Let $G$ be a graph and  $Q_1,\dots, Q_m$ be all the components in $G\setminus A_{\theta}(G)$ with \linebreak $\textnormal {mult} (\theta, Q_j)=0$ for all $j$. Then  $(u,v)\in E(D_{\theta}(G))$ if and only if
\begin {itemize}
\item [(a)] $u\in  B_{\theta}(G)$ and $v\in V(G)$, or
\item [(b)] $u,v\in N_{\theta}(G)$ with $u\in V(Q_{j_1})$ and $v\in V(Q_{j_2})$ for some $j_1$ and $j_2$, $j_1\neq j_2$, or
\item [(c)] $(u,v)\in E(D_{0,\theta}(Q_{j_0}))$ for some $j_0$.
\end {itemize}
\end {thm}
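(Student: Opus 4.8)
The plan is to read off $E(D_{\theta}(G))$ from the decomposition $D_{\theta}(G)=D_{-2,\theta}(G)\cup D_{-1,\theta}(G)\cup D_{0,\theta}(G)$, so that $(u,v)\in E(D_{\theta}(G))$ is equivalent to the single inequality $\m(\theta,G\setminus uv)\le \m(\theta,G)$, and then to prove the two implications of the stated equivalence separately. The recurring tool in both directions will be the dictionary between multiplicities and vertex types: by Corollary \ref{P:C7} the vertices lying in the components $Q_1,\dots,Q_m$ are precisely those of $P_{\theta}(G)\cup N_{\theta}(G)$, and applying part (a) of Theorem \ref{basic_property} to the disjoint union $G\setminus A_{\theta}(G)$ shows that for $u\in V(Q_j)$ one has $\m(\theta,Q_j\setminus u)=1$ if $u\in P_{\theta}(G)$ and $\m(\theta,Q_j\setminus u)=0$ if $u\in N_{\theta}(G)$. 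I would state this translation once at the outset, since it is what converts the arithmetic statements of Corollary \ref{S:C3b} into the type-theoretic conditions (a)--(c).

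For the sufficiency direction I would simply verify each alternative. If (a) holds, part (a) of Lemma \ref{BP:L4a} gives $\m(\theta,G\setminus uv)\le \m(\theta,G)$ at once. If (c) holds, then $(u,v)\in E(D_{0,\theta}(Q_{j_0}))$ means $\m(\theta,Q_{j_0}\setminus uv)=0$, and part (a) of Corollary \ref{S:C3b} yields $\m(\theta,G\setminus uv)=\m(\theta,G)$. If (b) holds, the dictionary forces $\m(\theta,Q_{j_1}\setminus u)=0=\m(\theta,Q_{j_2}\setminus v)$, and part (b) of Corollary \ref{S:C3b} again gives equality. In every case $(u,v)\in E(D_{\theta}(G))$.

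The necessity direction is where the actual case analysis lives. Assuming $\m(\theta,G\setminus uv)\le \m(\theta,G)$, I would first absorb the essential endpoints: if $u\in B_{\theta}(G)$ or $v\in B_{\theta}(G)$ then (a) holds, so I may assume $u,v\notin B_{\theta}(G)$. Next I would rule out special endpoints: if $u\in A_{\theta}(G)$ then part (d) of Lemma \ref{BP:L4a}, combined with $v\notin B_{\theta}(G)$, forces $\m(\theta,G\setminus uv)\ge \m(\theta,G)+1$, contradicting the edge condition; hence $u,v\notin A_{\theta}(G)$ as well. This leaves $u,v\in P_{\theta}(G)\cup N_{\theta}(G)$, so both sit in the $Q$-components, and I would split according to whether they lie in the same component or not. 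If $u,v\in V(Q_{j_0})$, Corollary \ref{S:C3b}(a) gives $\m(\theta,Q_{j_0}\setminus uv)=\m(\theta,G\setminus uv)-\m(\theta,G)\le 0$, hence $\m(\theta,Q_{j_0}\setminus uv)=0$, which is (c). If $u\in V(Q_{j_1})$ and $v\in V(Q_{j_2})$ with $j_1\neq j_2$, Corollary \ref{S:C3b}(b) gives $\m(\theta,Q_{j_1}\setminus u)+\m(\theta,Q_{j_2}\setminus v)\le 0$; since both terms are nonnegative, they vanish, so $u$ and $v$ are $\theta$-neutral in their components, and the dictionary places them in $N_{\theta}(G)$, which is (b).

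I expect the only genuinely delicate point to be the reduction at the start of the necessity direction: one must use the sharp equalities of Lemma \ref{BP:L4a}(d) to eliminate special endpoints and be careful to fold every essential endpoint into alternative (a) before invoking Corollary \ref{S:C3b}, whose hypotheses require both deleted vertices to live in the $Q$-components. Once that reduction is in place the remaining work is bookkeeping: the translation between $\m(\theta,Q_j\setminus u)\in\{0,1\}$ and the type of $u$ handles everything, and no structural input beyond Corollaries \ref{P:C7} and \ref{S:C3b} is required.
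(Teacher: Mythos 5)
Your proposal is correct and follows essentially the same route as the paper: sufficiency of (a)--(c) via interlacing and Corollary \ref{S:C3b}, and necessity by reducing to $u,v\in P_{\theta}(G)\cup N_{\theta}(G)$ and then splitting on whether $u,v$ lie in the same $Q$-component. The only cosmetic difference is that you eliminate $\theta$-special endpoints directly from Lemma \ref{BP:L4a}(d), whereas the paper cites Lemmas \ref{BP:L6a}, \ref{BP:L10} and \ref{BP:L13}; both rest on the same structural facts.
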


\begin {proof} Suppose (a) holds. Since $u\in B_{\theta}(G)$, $\textnormal {mult} (\theta, G\setminus u)=\textnormal {mult} (\theta, G)-1$. By Lemma \ref {interlacing}, we have $\textnormal {mult} (\theta, G\setminus uv)\leq \textnormal {mult} (\theta, G)$ for all $v\in V(G)$. Hence $(u,v)\in E(D_{\theta}(G))$ for all $v\in V(G)$.

Suppose (b) holds. By  Theorem \ref {P:T5}, $u,v\in N_{\theta}(G\setminus A_{\theta}(G))$. By using part (a) of Theorem \ref {basic_property}, we can deduce that $\textnormal {mult} (\theta, Q_{j_1}\setminus u)=0=\textnormal {mult} (\theta, Q_{j_2}\setminus v)$. By part (b) of Corollary \ref {S:C3b}, $\textnormal {mult} (\theta, G\setminus uv)=\textnormal {mult} (\theta, G)+\textnormal {mult} (\theta, Q_{j_1}\setminus u)+\textnormal {mult} (\theta, Q_{j_2}\setminus v)=\textnormal {mult} (\theta, G)$. Hence $(u,v)\in E(D_{\theta}(G))$.

Suppose (c) holds. Then $\textnormal {mult} (\theta, Q_{j_0}\setminus uv)=0$. By part (a) of Corollary \ref {S:C3b}, $\textnormal {mult} (\theta, G\setminus uv)=\textnormal {mult} (\theta, G)+\textnormal {mult} (\theta, Q_{j_0}\setminus uv)=\textnormal {mult} (\theta, G)$. Hence $(u,v)\in E(D_{\theta}(G))$.

Suppose $(u,v)\in E(D_{\theta}(G))$. By Lemma \ref {BP:L6a}, Lemma \ref {BP:L10} and Lemma \ref {BP:L13}, we may assume that $u,v\in P_{\theta}(G)\cup N_{\theta}(G)$. Suppose $u,v\in V(Q_{j_0})$ for some $j_0$. By part (a) of Corollary \ref {S:C3b}, $\textnormal {mult} (\theta, G\setminus uv)=\textnormal {mult} (\theta, G)+\textnormal {mult} (\theta, Q_{j_0}\setminus uv)$. Since $\textnormal {mult} (\theta, G\setminus uv)\leq \textnormal {mult} (\theta, G)$, we must have
$\textnormal {mult} (\theta, Q_{j_0}\setminus uv)=0$ and  $(u,v)\in E(D_{0,\theta}(Q_{j_0}))$.

Suppose  $u\in V(Q_{j_1})$ and $v\in V(Q_{j_2})$ for some $j_1$ and $j_2$, $j_1\neq j_2$. By part (b) of Corollary \ref {S:C3b}, $\textnormal {mult} (\theta, G\setminus uv)=\textnormal {mult} (\theta, G)+\textnormal {mult} (\theta, Q_{j_1}\setminus u)+\textnormal {mult} (\theta, Q_{j_2}\setminus v)$. Since $\textnormal {mult} (\theta, G\setminus uv)\leq \textnormal {mult} (\theta, G)$, we must have
$\textnormal {mult} (\theta, Q_{j_1}\setminus u)=0=\textnormal {mult} (\theta, Q_{j_2}\setminus v)$. This implies that $u,v\in N_{\theta}(G\setminus A_{\theta}(G))$. Hence by Theorem \ref {P:T5}, $u,v\in N_{\theta}(G)$.
\end {proof}

Note that in Theorem \ref{d_graph_for_G}, the edge-set in $D_{\theta}(G)$ depends only on the Gallai-Edmonds decomposition of $G$. Therefore if $G$ and $G'$ have the same Gallai-Edmonds decomposition with respect to $\theta$ via $\psi$, then $D_{\theta}(G)\overset{\psi}{\cong} D_{\theta}(G')$. Since $G$ and $S_{\theta}(G)$ have the same Gallai-Edmonds decomposition via the identity map, we have $D_{\theta}(G)=D_{\theta}(S_{\theta}(G))$. This proves the following corollary.

\begin {cor}\label{gallai_Edmond_decomposition_G_S} If $G$ and $G'$ have the same Gallai-Edmonds decomposition with respect to $\theta$, then $D_{\theta}(G)\cong D_{\theta}(G')$. In particular, $D_{\theta}(G)=D_{\theta}(S_{\theta}(G))$.
\end {cor}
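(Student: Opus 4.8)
The plan is to upgrade the given bijection $\psi \colon V(G) \to V(G')$ witnessing that $G$ and $G'$ share a Gallai-Edmonds decomposition into a genuine graph isomorphism $D_\theta(G) \to D_\theta(G')$. Concretely, I would show that for all distinct $u, v \in V(G)$ one has $(u,v) \in E(D_\theta(G))$ if and only if $(\psi(u), \psi(v)) \in E(D_\theta(G'))$, and the tool for this is Theorem \ref{d_graph_for_G}, which describes $E(D_\theta(G))$ purely through three isomorphism-invariant pieces of the decomposition: the essential set $B_\theta(G)$, the neutral set $N_\theta(G)$, and the non-$\theta$-critical components $Q_1, \dots, Q_m$ of $G \setminus A_\theta(G)$ together with their induced graphs $D_{0,\theta}(Q_j)$.

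First I would record what $\psi$ does to each of these pieces. Since $\psi$ restricts to a graph isomorphism of $G \setminus A_\theta(G)$ onto $G' \setminus A_\theta(G')$, it carries components to components; as being $\theta$-critical is dictated by the matching polynomials $\mu(H,x)$ and $\mu(H\setminus u, x)$ and is therefore preserved under isomorphism, $\psi$ matches $\theta$-critical components with $\theta$-critical components and the $Q_j$ with the non-$\theta$-critical components of $G'$. By part (iv) of Corollary \ref{P:C7}, $B_\theta(G)$ is exactly the union of the $\theta$-critical components of $G \setminus A_\theta(G)$, so $\psi(B_\theta(G)) = B_\theta(G')$, while $\psi(A_\theta(G)) = A_\theta(G')$ holds by hypothesis. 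By part (i) of Corollary \ref{P:C7} together with the multiplicativity in part (a) of Theorem \ref{basic_property}, a vertex of $Q_j$ is $\theta$-neutral in $G$ precisely when it is $\theta$-neutral in $Q_j$; as neutrality is again isomorphism-invariant, $\psi$ matches the neutral vertices of each $Q_j$ with those of its image. Finally, $D_{0,\theta}(Q_j)$ depends only on the isomorphism type of $Q_j$, so $\psi$ carries $E(D_{0,\theta}(Q_j))$ onto the edge set of the corresponding $D_{0,\theta}$ of the image.

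With this dictionary in hand, each of conditions (a), (b), (c) of Theorem \ref{d_graph_for_G} for a pair $(u,v)$ in $G$ translates into exactly the same condition for $(\psi(u), \psi(v))$ in $G'$, which gives the isomorphism $D_\theta(G) \cong D_\theta(G')$. For the ``in particular'' clause, part (d) of Corollary \ref{similar_gallai_edmond} gives $A_\theta(S_\theta(G)) = A_\theta(G)$, and by construction $S_\theta(G) \setminus A_\theta(S_\theta(G)) = G \setminus A_\theta(G)$; hence the identity map on $V(G)$ witnesses that $G$ and $S_\theta(G)$ have the same Gallai-Edmonds decomposition, and applying the first part with $\psi = \mathrm{id}$ upgrades the isomorphism to the equality $D_\theta(G) = D_\theta(S_\theta(G))$. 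The only real work is the bookkeeping of the second paragraph---verifying that essential vertices, neutral vertices inside the $Q_j$, and the graphs $D_{0,\theta}(Q_j)$ are all isomorphism-invariant and hence transported correctly by $\psi$; once Theorem \ref{d_graph_for_G} is available this is routine, so I anticipate no genuine obstacle beyond care in matching up the components.
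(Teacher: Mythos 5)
Your proposal is correct and follows exactly the paper's route: the paper likewise observes that Theorem \ref{d_graph_for_G} describes $E(D_{\theta}(G))$ purely in terms of the Gallai-Edmonds decomposition, so the bijection $\psi$ induces an isomorphism $D_{\theta}(G)\cong D_{\theta}(G')$, and then specializes to $\psi=\mathrm{id}$ via Corollary \ref{similar_gallai_edmond} to get $D_{\theta}(G)=D_{\theta}(S_{\theta}(G))$. Your second paragraph merely makes explicit the invariance bookkeeping that the paper leaves implicit.
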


Note that if $G$ is a graph with $n$ vertices then $E(K_n)=E(D_{-2,\theta}(G))\cup E(D_{-1,\theta}(G))\cup E(D_{0,\theta}(G))\cup E(D_{1,\theta}(G))\cup E(D_{2,\theta}(G))$, where $K_n$ is the complete graph on $n$ vertices ($V(K_n)=V(G)$).

If we denote the complement of a graph $G$ by $\overline G$, by Corollary \ref {gallai_Edmond_decomposition_G_S}, we have

\begin {cor}\label {R:C6} Let $G$ be a graph. Then $\overline {D_{\theta}(G)}=\overline {D_{\theta}(S_{\theta}(G))}=G_{+}$, where $G_+$ is the graph with $V(G_+)=V(G)$ and $E(G_+)=E(D_{1,\theta}(G))\cup E(D_{2,\theta}(G))$.
\end {cor}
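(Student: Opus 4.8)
The plan is to derive the statement directly from the five-fold partition of the edge-set of the complete graph together with the already-established identity $D_{\theta}(G)=D_{\theta}(S_{\theta}(G))$ from Corollary \ref{gallai_Edmond_decomposition_G_S}. First I would record the key observation noted in the remark preceding the corollary: for any two distinct vertices $u,v\in V(G)$, two applications of interlacing (Lemma \ref{interlacing}) give $\m(\theta,G\setminus uv)-\m(\theta,G)\in\{-2,-1,0,1,2\}$, so the pair $(u,v)$ lies in exactly one of the edge-sets $E(D_{r,\theta}(G))$, $r=-2,-1,0,1,2$. Hence, writing $K_n$ for the complete graph on $V(G)$, the set $E(K_n)$ is the disjoint union of these five edge-sets.

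Next I would invoke the definition of $D_{\theta}(G)$ (Definition \ref{D-graph-2}): an edge $(u,v)$ belongs to $D_{\theta}(G)$ precisely when $\m(\theta,G\setminus uv)\le\m(\theta,G)$, that is, when the corresponding value of $r$ is nonpositive. Thus $E(D_{\theta}(G))=E(D_{-2,\theta}(G))\cup E(D_{-1,\theta}(G))\cup E(D_{0,\theta}(G))$. Taking complements inside $K_n$ and using the disjointness from the first step, the leftover edges are exactly those with $r=1$ or $r=2$, so $E(\overline{D_{\theta}(G)})=E(D_{1,\theta}(G))\cup E(D_{2,\theta}(G))=E(G_{+})$. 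Since $V(\overline{D_{\theta}(G)})=V(G)=V(G_{+})$, this yields $\overline{D_{\theta}(G)}=G_{+}$.

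Finally, the second equality is immediate: by Corollary \ref{gallai_Edmond_decomposition_G_S} we have $D_{\theta}(G)=D_{\theta}(S_{\theta}(G))$, and passing to complements gives $\overline{D_{\theta}(G)}=\overline{D_{\theta}(S_{\theta}(G))}$. I do not expect any genuine obstacle here; the argument is pure bookkeeping with the five-fold partition of $E(K_n)$, and the only substantive input, namely the invariance of the $D_{\theta}$-operator under $S_{\theta}$, has already been supplied by the preceding corollary.
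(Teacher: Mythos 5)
Your proposal is correct and follows essentially the same route as the paper: the paper's justification is precisely the observation that $E(K_n)$ is partitioned by the five sets $E(D_{r,\theta}(G))$, $r=-2,\dots,2$, combined with $D_{\theta}(G)=D_{-2,\theta}(G)\cup D_{-1,\theta}(G)\cup D_{0,\theta}(G)$ and the identity $D_{\theta}(G)=D_{\theta}(S_{\theta}(G))$ from Corollary \ref{gallai_Edmond_decomposition_G_S}. Nothing is missing.
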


\section{$\theta$-Nice Sets and Matchings}

In this section, we first relate $\theta$-nice sets with matchings. Then we proceed to show that $D_{\theta}(G)$ always contain certain induced subgraphs of $G$ related to $\theta$.

Recall that a path $P$ is called $\theta$-essential if $\m (\theta, G\setminus P)=\m (\theta, G)-1$. We shall require the following lemmas:

\begin{lm}\label{essential-path}\textnormal {\cite[Lemma 3.3]{G}}
If $P$ is a $\theta$-essential path in $G$, then both of its end points are $\theta$-essential in $G$.
\end{lm}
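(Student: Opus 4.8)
The statement is Godsil's \cite[Lemma 3.3]{G}, so the plan is to reconstruct its proof from the interlacing machinery already in hand, refining the inequality in Lemma~\ref{path_interlacing} into its equality case. Write $m=\m(\theta,G)$ and let $u,w$ be the two end points of the $\theta$-essential path $P$, so that $\m(\theta,G\setminus P)=m-1$; in particular $m\ge 1$, so $\theta$ is a root of $\mu(G,x)$. If $P$ is a single vertex then $u=w$ and the assertion is exactly the definition of a $\theta$-essential path, so I may assume $u\neq w$.

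The key tool I would invoke is the quadratic identity for the matching polynomial (a standard identity of Godsil, see \cite{G0}):
\begin{equation}
\mu(G\setminus u,x)\,\mu(G\setminus w,x)-\mu(G,x)\,\mu(G\setminus uw,x)=\sum_{R}\mu(G\setminus R,x)^{2},\notag
\end{equation}
where $R$ ranges over all paths in $G$ from $u$ to $w$. First I would compute the multiplicity of $\theta$ on the right-hand side. Writing each real polynomial as $\mu(G\setminus R,x)=(x-\theta)^{\m(\theta,G\setminus R)}g_R(x)$ with $g_R(\theta)\neq 0$, the lowest power of $(x-\theta)$ occurring in the sum of squares is $2\min_R\m(\theta,G\setminus R)$, and its coefficient is $\sum_{R}g_R(\theta)^{2}$ taken over the minimizing paths, which is strictly positive (squares cannot cancel, and $P$ is among the minimizers). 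By Lemma~\ref{path_interlacing} every path $R$ satisfies $\m(\theta,G\setminus R)\ge m-1$, while $P$ itself attains $m-1$; hence $\min_R\m(\theta,G\setminus R)=m-1$ and the right-hand side has multiplicity exactly $2m-2$ at $\theta$.

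Now I would compare multiplicities on the left. By interlacing (Lemma~\ref{interlacing}) each of $\m(\theta,G\setminus u)$ and $\m(\theta,G\setminus w)$ is at least $m-1$ and $\m(\theta,G\setminus uw)\ge m-2$, so the two products $A:=\mu(G\setminus u)\mu(G\setminus w)$ and $B:=\mu(G)\mu(G\setminus uw)$ each have multiplicity at least $2m-2$ at $\theta$. Since $A-B$ has multiplicity exactly $2m-2$, we cannot have both $\m(\theta,A)>2m-2$ and $\m(\theta,B)>2m-2$ (otherwise $(x-\theta)^{2m-1}$ would divide $A-B$); thus $\m(\theta,A)=2m-2$ or $\m(\theta,B)=2m-2$. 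In the first case $\m(\theta,G\setminus u)+\m(\theta,G\setminus w)=2m-2$ with both summands $\ge m-1$, forcing $\m(\theta,G\setminus u)=\m(\theta,G\setminus w)=m-1$, i.e.\ both end points are $\theta$-essential. In the second case $\m(\theta,G\setminus uw)=m-2$; deleting $u$ then $w$ and applying Lemma~\ref{interlacing} at each step forces $\m(\theta,G\setminus u)=m-1$, and symmetrically $\m(\theta,G\setminus w)=m-1$, so again both end points are $\theta$-essential.

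The main obstacle is really the input identity together with the multiplicity bookkeeping: once the sum-of-squares form is available, the crucial point — that squares cannot cancel, pinning the right-hand multiplicity to exactly $2m-2$ — is what upgrades the mere inequality $\m(\theta,G\setminus P)\ge m-1$ into control over the individual end points. An alternative, identity-free route is induction on the length of $P$ (peel an end vertex $u$ and apply Lemma~\ref{path_interlacing} in $G\setminus u$ to see $u$ is $\theta$-essential or $\theta$-neutral); but ruling out the neutral case seems to require exactly the kind of quadratic information that the identity supplies, so I would favour the identity-based argument.
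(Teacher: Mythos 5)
Your proof is correct. A preliminary remark: the paper itself gives no proof of this lemma --- it is imported verbatim from Godsil \cite[Lemma 3.3]{G} --- so there is no internal argument to compare against; what you have written is essentially a reconstruction of Godsil's original proof, and it runs on exactly the machinery this paper does deploy elsewhere, namely the Heilmann--Lieb sum-of-squares identity used in the proof of Lemma \ref{nonpositive_path}. Your bookkeeping is sound at every step: $m\ge 1$ is forced by $\m(\theta,G\setminus P)=m-1\ge 0$, so Lemma \ref{path_interlacing} applies to every $u$--$w$ path $R$ and gives $\m(\theta,G\setminus R)\ge m-1$ with $P$ attaining it; the positivity of $\sum_{R} g_R(\theta)^2$ over the minimizing paths pins the right-hand multiplicity at exactly $2m-2$ (this no-cancellation step is the same one the paper uses in both directions of Lemma \ref{nonpositive_path}); and the dichotomy on the two products is airtight, since in the case $\m(\theta,G\setminus uw)=m-2$ interlacing gives $\m(\theta,G\setminus u)\le \m(\theta,G\setminus uw)+1=m-1$ as well as $\m(\theta,G\setminus u)\ge m-1$, and symmetrically for $w$. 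Your closing observation is also accurate: the identity-free induction (peel an end vertex and apply Lemma \ref{path_interlacing} in $G\setminus u$) only shows the endpoint is $\theta$-essential or $\theta$-neutral, and excluding neutrality is precisely the quadratic information the identity supplies, which is why this route is the right one.
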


\begin{lm}\label{essential-neighbor}\textnormal {\cite[Lemma 3.4]{G}}
Let $G$ be a graph and $u$ a vertex in $G$ which is not $\theta$-essential. Then $u$ is $\theta$-positive in $G$ if and only if some neighbor of it is $\theta$-essential in $G \setminus u$.
\end{lm}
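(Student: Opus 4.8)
The plan is to prove the two implications separately: the forward direction (\,$u$ $\theta$-positive $\Rightarrow$ some neighbor is $\theta$-essential in $G\setminus u$\,) by a multiplicity count on the vertex-deletion recurrence, and the converse by appealing to Lemma~\ref{essential-path}. Throughout I would set $k=\m(\theta,G)$ and $m=\m(\theta,G\setminus u)$; since $u$ is not $\theta$-essential, interlacing (Lemma~\ref{interlacing}) gives $m\in\{k,k+1\}$, and a neighbor $i$ of $u$ is $\theta$-essential in $G\setminus u$ precisely when $\m(\theta,G\setminus ui)=m-1$.

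For the forward direction I would start from part (c) of Theorem~\ref{basic_property} and rearrange it as $\sum_{i\sim u}\mu(G\setminus ui,x)=x\,\mu(G\setminus u,x)-\mu(G,x)$. The right-hand side has multiplicity exactly $k$ at $\theta$: when $\theta\neq 0$ the factor $x$ is nonzero at $\theta$, so $x\,\mu(G\setminus u,x)$ has multiplicity $m=k+1$ while $\mu(G,x)$ has multiplicity $k$, and the order-$k$ term of $-\mu(G,x)$ is not cancelled; when $\theta=0$ the term $x\,\mu(G\setminus u,x)$ has multiplicity $m+1=k+2$, so again the difference has multiplicity $k$. On the left, interlacing forces each summand $\mu(G\setminus ui,x)$ to have multiplicity at least $m-1=k$, so the only way the sum can have multiplicity exactly $k$ is for at least one summand to have multiplicity exactly $k=m-1$; that neighbor $i$ is then $\theta$-essential in $G\setminus u$. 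The point to stress here is that I only need the \emph{existence} of a minimal-multiplicity summand (its order-$k$ coefficient has to account for the nonzero order-$k$ coefficient of the sum), which sidesteps any possible cancellation among the various order-$k$ coefficients.

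For the converse I would argue by contradiction using the essential-path lemma. Suppose some neighbor $i_0$ is $\theta$-essential in $G\setminus u$, i.e.\ $\m(\theta,G\setminus ui_0)=m-1$, but that $u$ fails to be $\theta$-positive; since $u$ is not $\theta$-essential this forces $u$ to be $\theta$-neutral, so $m=k$ and hence $\m(\theta,G\setminus ui_0)=k-1$. Viewing the single edge joining $u$ and $i_0$ as a path $P$ with $G\setminus P=G\setminus ui_0$, I get $\m(\theta,G\setminus P)=k-1=\m(\theta,G)-1$, so $P$ is a $\theta$-essential path. Lemma~\ref{essential-path} then makes both endpoints, in particular $u$, $\theta$-essential in $G$, contradicting the hypothesis. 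Hence $u$ must be $\theta$-positive.

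The delicate point is the converse. A naive multiplicity count on the same recurrence only shows that the order-$(k-1)$ coefficients of the summands must cancel, and ruling this cancellation out directly would require the full positivity/interlacing theory for matching polynomials. Routing the argument through Lemma~\ref{essential-path} is exactly what avoids this obstacle, so the bulk of the remaining work is careful multiplicity bookkeeping in the forward direction (including the separate check at $\theta=0$) and the correct identification of the edge $u i_0$ with a $\theta$-essential path.
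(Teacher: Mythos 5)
Your proof is correct. The paper itself gives no proof of this lemma---it is quoted verbatim from Godsil's \emph{Algebraic matching theory} (\cite[Lemma 3.4]{G})---and your argument is essentially the one in that source: the forward direction by observing that $x\mu(G\setminus u,x)-\mu(G,x)$ has multiplicity exactly $k$ at $\theta$ (with the separate bookkeeping at $\theta=0$ handled correctly) while a sum of summands each of multiplicity at least $k+1$ could not, and the converse by ruling out the $\theta$-neutral case via the single-edge path $ui_0$ and Lemma~\ref{essential-path}. Your closing remark is also on point: the converse genuinely cannot be obtained from the recurrence alone because of possible cancellation at order $k-1$, which is exactly why the essential-path lemma (whose proof rests on the Heilmann--Lieb identity) is the right tool there.
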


\begin{lm}\label{nonpositive_path}
Let $u, v$ be two distinct $\theta$-positive vertices of $G$. Then $\m(\theta, G \setminus uv) \le \m(\theta, G)$ if and only if there exists a path $P$ from $u$ to $v$ such that $\m(\theta, G \setminus P) \le \m(\theta, G)$.
\end{lm}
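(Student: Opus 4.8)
The plan is to reformulate both sides of the equivalence as statements about $\theta$-essentiality in $G\setminus u$, exploiting that $u$ and $v$ are $\theta$-positive, and then read off the two directions from the essential-path lemmas. Write $G'=G\setminus u$. Since $u$ is $\theta$-positive, $\m(\theta,G')=\m(\theta,G)+1$, and by part (b) of Lemma \ref{godsil_positive} the vertex $v$ is either $\theta$-essential or $\theta$-positive in $G'$. Hence $\m(\theta,G\setminus uv)=\m(\theta,G'\setminus v)$ equals either $\m(\theta,G)$ or $\m(\theta,G)+2$, so the left-hand condition $\m(\theta,G\setminus uv)\le\m(\theta,G)$ is equivalent to the equality $\m(\theta,G\setminus uv)=\m(\theta,G)$, i.e. to $v$ being $\theta$-essential in $G'$. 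Similarly, since $u,v$ are $\theta$-positive they are not $\theta$-essential, so by Lemma \ref{essential-path} no $u$--$v$ path can be $\theta$-essential in $G$; combined with Lemma \ref{path_interlacing} this forces $\m(\theta,G\setminus P)\ge\m(\theta,G)$ for every $u$--$v$ path $P$, so the right-hand condition likewise reduces to $\m(\theta,G\setminus P)=\m(\theta,G)$.

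The backward direction is then immediate. Given a $u$--$v$ path $P$ with $\m(\theta,G\setminus P)=\m(\theta,G)$, let $P'$ be the path in $G'$ obtained by deleting the endpoint $u$ (a single vertex $v$ when $P=uv$). Then $G\setminus P=G'\setminus P'$, so $\m(\theta,G'\setminus P')=\m(\theta,G)=\m(\theta,G')-1$; that is, $P'$ is a $\theta$-essential path in $G'$ ending at $v$. By Lemma \ref{essential-path} its endpoint $v$ is $\theta$-essential in $G'$, which is exactly the reformulated left-hand condition.

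For the forward direction I must produce such a path. Assume $v$ is $\theta$-essential in $G'$. Since $u$ is $\theta$-positive and not $\theta$-essential, Lemma \ref{essential-neighbor} supplies a neighbor $w$ of $u$ that is $\theta$-essential in $G'$, which furnishes the first edge $uw$. It then suffices to build a $\theta$-essential path in $G'$ joining some essential neighbor of $u$ to $v$, since prepending $u$ yields a $u$--$v$ path $P$ with $\m(\theta,G\setminus P)=\m(\theta,G'\setminus P')=\m(\theta,G)$. The main obstacle is precisely this construction: $w$ and $v$ are both $\theta$-essential in $G'$ but need not lie in the same $\theta$-critical component of $G'\setminus A_{\theta}(G')$, and one checks on small examples that the required path may have to route through $A_{\theta}(G')$ and even exhaust the graph in order to keep $\m(\theta,G'\setminus P')$ equal to $\m(\theta,G')-1$. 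I would handle this by induction on $|V(G)|$: if $u\sim v$ take $P=uv$ directly; otherwise grow an essential path outward from $v$ one vertex at a time, using Lemma \ref{neutral_neighbor} (for $\theta\neq 0$) to select a $\theta$-neutral neighbor of the current endpoint in the relevant deleted subgraph, so that appending the new vertex preserves the multiplicity drop, while tracking the evolving Gallai-Edmonds data via the $\theta$-Stability Lemma (Theorem \ref{P:T5}).

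I expect the delicate point to be verifying that this greedy extension eventually reaches a neighbor of $u$ while maintaining the essential-path condition $\m(\theta,G'\setminus P')=\m(\theta,G')-1$ at every stage; controlling the interaction between the growing path and the special vertices in $A_{\theta}(G')$ is where the bookkeeping is heaviest. The case $\theta=0$ should be separated out and settled by the classical alternating-path/Gallai-Edmonds theory already encoded in Theorem \ref{extreme_zero}, since Lemma \ref{neutral_neighbor} is unavailable there, whereas for $\theta\neq 0$ the neutral-neighbor lemma is the engine that drives the path construction.
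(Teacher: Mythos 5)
Your reformulation and your proof of the backward direction are correct, and they take a genuinely different route from the paper for that half: reducing both sides to $\theta$-essentiality statements about $G'=G\setminus u$ and applying Lemma \ref{essential-path} to the path $P'=P-u$ is a clean combinatorial argument (and your use of part (b) of Lemma \ref{godsil_positive} to rule out $\m(\theta,G\setminus uv)=\m(\theta,G)+1$ is on the same footing as the paper's own use of it). The genuine gap is the forward direction, which you have not proved. You correctly reduce it to producing a $\theta$-essential path in $G'$ from some neighbor of $u$ to $v$, but then you offer only a sketch --- greedy extension via Lemma \ref{neutral_neighbor} with Gallai-Edmonds bookkeeping --- and you explicitly leave the crucial step (``eventually reaches a neighbor of $u$ while maintaining the multiplicity drop'') unverified. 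Nothing in the paper guarantees a $\theta$-essential path between two prescribed $\theta$-essential vertices: Lemma \ref{essential-path} is a one-way implication, and your plan has concrete unresolved obstacles. Lemma \ref{essential-neighbor} supplies \emph{some} essential neighbor $w$ of $u$, but if $G'$ is disconnected, $w$ need not lie in the component of $v$, in which case no $w$--$v$ path exists at all and you would need to find an essential neighbor in the right component, which no cited lemma provides; likewise, when the path must route through $A_{\theta}(G')$ or between distinct $\theta$-critical components, your greedy scheme gives no mechanism ensuring $\m(\theta,G'\setminus P')=\m(\theta,G')-1$ is preserved at every stage.

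The paper sidesteps this construction entirely with the Heilmann-Lieb identity
$\mu(G\setminus u,x)\mu(G\setminus v,x)-\mu(G,x)\mu(G\setminus uv,x)=\sum_{P\in\mathbb{P}(u,v)}\mu(G\setminus P,x)^{2}$,
arguing by multiplicity count: if $\m(\theta,G\setminus uv)\le k$ but every $u$--$v$ path had $\m(\theta,G\setminus P)\ge k+1$, then the left side would vanish at $\theta$ to order at most $2k$ in its second product while the right side (and the first product, since $u,v$ are $\theta$-positive) vanishes to order at least $2k+2$, a contradiction; the converse is handled by a sum-of-squares positivity argument with the same identity. This non-constructive algebraic step is exactly what substitutes for the essential-path existence you would have to build by hand. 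Your backward half is a nice elementary alternative to the paper's, but it is the easy half; as it stands, the proposal does not prove the lemma.
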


\begin{proof}
Let $k=\m(\theta, G)$ where $k \ge 0$. Consider the Heilmann-Lieb Identity (see \cite[Theorem 6.3]{HL} and \cite[Lemma 2.4]{G}):
\[ \mu(G \setminus u,x)\mu(G \setminus v,x) - \mu(G,x)\mu(G \setminus uv,x) = \sum_{P \in \mathbb{P}(u,v)} \mu(G \setminus P,x)^{2}\]
where $\mathbb{P}(u,v)$ denote the set of paths from $u$ to $v$ in $G$.

$(\Longrightarrow)$ Suppose there is no path $P$ from $u$ to $v$ such that $\m(\theta, G \setminus P) \le \m(\theta, G)$. Then $\theta$ is a root of the polynomial $\mu(G \setminus u,x)\mu(G \setminus v,x) - \sum_{P \in \mathbb{P}(u,v)} \mu(G \setminus P,x)^{2}$ with multiplicity at least $2k+2$. But this contradicts the fact that the multiplicity of $\theta$ as a root of $\mu(G,x)\mu(G \setminus uv,x)$ is at most $2k$.

$(\Longleftarrow)$ Suppose $\m(\theta, G \setminus uv) > \m(\theta, G)=k$. By Lemma \ref{godsil_positive}, $\m(\theta, G \setminus uv)=k+2$. Since $\{P \in \mathbb{P}(u,v): \m(\theta, G \setminus P) \le k\} \not = \emptyset$, we can write
\[ \sum_{P \in \mathbb{P}(u,v) \atop \m(\theta, G \setminus P) \le k} \mu(G \setminus P,x)^{2} = \sum_{i=1}^{m} (x-\theta)^{2t} (g_{i}(x))^{2}\]
for some $m$ and $t \le k$ and $g_{j}(\theta) \not = 0$ for some $j \in \{1, \ldots, m\}$.

On the other hand, from the Heilmann-Lieb Identity, we see that
\[ \mu(G \setminus u,x)\mu(G \setminus v,x) - \mu(G,x)\mu(G \setminus uv,x)- \sum_{P \in \mathbb{P}(u,v) \atop \m(\theta, G \setminus P)> k} \mu(G \setminus P,x)^{2} =  \sum_{P \in \mathbb{P}(u,v) \atop \m(\theta, G \setminus P) \le k} \mu(G \setminus P,x)^{2} \]
where the left-hand side has $\theta$ as a root with multiplicity at least $2k+2$. Therefore
\[ \frac{1}{(x-\theta)^{2t}} \left(\mu(G \setminus u,x)\mu(G \setminus v,x) - \mu(G,x)\mu(G \setminus uv,x)- \sum_{P \in \mathbb{P}(u,v) \atop \m(\theta, G \setminus P)> k} \mu(G \setminus P,x)^{2} \right) = \sum_{i=1}^{m} (g_{i}(x))^{2}\]
where the left-hand side has $\theta$ as a root with nonzero multiplicity. But this contradicts the fact that $\sum_{i=1}^{m} (g_{i}(\theta))^{2}>0$.
\end{proof}

\begin{thm}\label{nice_matching}
Suppose $X=\{x_{1}, \ldots, x_{m}\}$ is $\theta$-nice in $G$ and $\m(\theta, G)=k$ (We allow $k$ to take zero value). Then there exists a set $Y=\{y_{1}, \ldots, y_{m}\}$ disjoint from $X$ such that
\begin{itemize}
\item[(i)] $M=\{x_{1}y_{1}, \ldots, x_{m}y_{m}\}$ is a matching of size $m$ in $G$,
\item[(ii)] for any $M' \subseteq M$, we have $\m(\theta, G \setminus V(M')) = k$ and if $|X \setminus V(M')| \ge 2$, then $X \setminus V(M')$ is $\theta$-nice in $G \setminus V(M')$, and
\item[(iii)] $Y$ is an independent set.
\end{itemize}
\end{thm}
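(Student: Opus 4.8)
The plan is to produce the candidate partners one vertex at a time and then to control the multiplicity when several matched pairs are deleted simultaneously. Since $X$ is $\theta$-nice, Proposition \ref{one_side} makes every $x_i$ a $\theta$-positive vertex, so by Lemma \ref{essential-neighbor} each $x_i$ has a neighbour $y_i$ that is $\theta$-essential in $G\setminus x_i$; moreover $y_i\notin X$, because if $y_i=x_j$ then $\m(\theta,G\setminus x_ix_j)=\m(\theta,G\setminus x_i)-1=k$, contradicting $\theta$-niceness. In particular $\m(\theta,G\setminus x_iy_i)=k$ for every single pair, which is the seed of (ii).

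The engine is a single-pair reduction: if $X$ is $\theta$-nice in $G$ and $y_1$ is a $\theta$-essential neighbour of $x_1$ in $G\setminus x_1$, then $\m(\theta,G\setminus x_1y_1)=k$ and $X\setminus\{x_1\}$ is again $\theta$-nice in $G\setminus x_1y_1$. I would argue as follows. Each $x_j$ ($j\ge 2$) is $\theta$-positive in $G\setminus x_1$ by $\theta$-niceness, and then $\theta$-positive in $G\setminus x_1y_1$ by Corollary \ref{delete-essential-positive}, since $y_1$ is $\theta$-essential in $G\setminus x_1$. Next, deleting the (successively $\theta$-positive, because $X$ is $\theta$-extreme) vertices $x_2,\dots,x_m$ from $G\setminus x_1$ and iterating Lemma \ref{godsil_positive}(a) keeps $y_1$ $\theta$-essential all the way down to $G\setminus X$; as $\m(\theta,G\setminus X)=k+m$ by Theorem \ref{general_nice_case}, this gives $\m(\theta,(G\setminus X)\setminus y_1)=k+m-1$, i.e.\ $X\setminus\{x_1\}$ is $\theta$-extreme, hence $\theta$-nice, in $G\setminus x_1y_1$. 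Iterating this reduction builds the matching $M$ and proves (i), together with (ii) for the nested subsets $\emptyset\subset\{1\}\subset\{1,2\}\subset\cdots$.

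To upgrade to all subsets and to get independence, my approach is to choose the partners so that, in $\tilde G:=G\setminus X$, the vertices $y_1,\dots,y_m$ are $\theta$-essential and lie in pairwise distinct $\theta$-critical components of $\tilde G\setminus A_\theta(\tilde G)$; the iteration above naturally forces each new partner into a fresh critical component, since deleting $y_1$ destroys the criticality of its own component. Two vertices in distinct components of $\tilde G\setminus A_\theta(\tilde G)$ are non-adjacent there and (being outside $A_\theta(\tilde G)$ and outside $X$) non-adjacent in $G$, which gives the independence in (iii). Distinctness also yields the dual of $\theta$-extremeness for $Y$: deleting any $W\subseteq Y$ removes one vertex from $|W|$ distinct critical components, so $\m(\theta,\tilde G\setminus W)=(k+m)-|W|$, where I would invoke Corollary \ref{P:C7} and the stability statements in Theorem \ref{P:T5} to confirm that $A_\theta(\tilde G)$ stays $\theta$-extreme after deleting the essential $y_i$. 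Granting this, for any $S$ the graph $G\setminus V(M_S)$ satisfies $\big(G\setminus V(M_S)\big)\setminus\{x_j:j\notin S\}=\tilde G\setminus\{y_i:i\in S\}$ of multiplicity $k+m-|S|$, so interlacing (Lemma \ref{interlacing}) over the $m-|S|$ deletions gives the lower bound $\m(\theta,G\setminus V(M_S))\ge k$; once equality is established, Theorem \ref{general_nice_case} promotes $\{x_j:j\notin S\}$ from $\theta$-extreme to $\theta$-nice in $G\setminus V(M_S)$, completing (ii).

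The hard part is the matching upper bound $\m(\theta,G\setminus V(M_S))\le k$ for \emph{all} $S$ at once: interlacing alone only sandwiches this value between $k$ and $k+m-|S|$, and a single fixed family $Y$ must certify equality along every deletion order, not merely the one used to build $M$. The obstacle is that a partner $y_i$ essential in $G\setminus x_i$ need not remain essential after deleting an arbitrary subset of the \emph{other} pairs (Lemma \ref{godsil_positive} and Lemma \ref{all_positive} leave this genuinely open), so I cannot simply re-run the single-pair reduction in an arbitrary order. I expect to resolve this exactly through the distinct-component choice together with the Heilmann--Lieb path identity of Lemma \ref{nonpositive_path}: $\theta$-niceness says no path between two $x_i$'s has $\m(\theta,G\setminus P)\le k$, and transporting such paths across the matched edges $x_iy_i$ is the mechanism that simultaneously keeps the $y_i$ in separate critical components and bars the multiplicity from exceeding $k$. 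Equivalently, the consistency can be secured by a Hall-type selection of the $y_i$ into distinct $\theta$-critical components of $\tilde G\setminus A_\theta(\tilde G)$, which I regard as the single step requiring the most care.
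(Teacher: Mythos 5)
Your construction of the matching and your ``engine'' are sound, and in fact slightly cleaner than the paper's at that stage: choosing $y_1$ $\theta$-essential in $G\setminus x_1$, observing $y_1\notin X$, and iterating Lemma \ref{godsil_positive}(a) along the successively $\theta$-positive deletions $x_2,\dots,x_m$ (legitimate because $X$ is $\theta$-extreme by Theorem \ref{general_nice_case}) does show $\m(\theta,(G\setminus x_1y_1)\setminus X')=k+m-1$, so $X'=X\setminus\{x_1\}$ is $\theta$-extreme, hence $\theta$-nice, in $G\setminus x_1y_1$; the paper obtains the same reduction by an interlacing sandwich. But this only certifies part (ii) along the nested chain $\{x_1y_1\}\subset\{x_1y_1,x_2y_2\}\subset\cdots$, and you correctly identify that the uniform bound $\m(\theta,G\setminus V(M'))\le k$ for \emph{every} $M'\subseteq M$ is the heart of the theorem --- yet your proposal does not prove it. The distinct-critical-components plan it rests on is unsubstantiated: $y_{i+1}$ is $\theta$-essential only in the intermediate graph $G\setminus x_1y_1\cdots x_iy_i\setminus x_{i+1}$, and no cited result transfers this to $\theta$-essentiality in $\tilde G=G\setminus X$, let alone forces $y_{i+1}$ into a fresh $\theta$-critical component of $\tilde G\setminus A_\theta(\tilde G)$ (the stability statements of Theorem \ref{P:T5} and Corollary \ref{P:C7} govern deletion of $\theta$-\emph{special} vertices, not $\theta$-essential ones, so your claim that $A_\theta(\tilde G)$ ``stays $\theta$-extreme after deleting the essential $y_i$'' is likewise unsupported --- interlacing alone leaves a gap of $2\vert A_\theta(\tilde G)\vert$ in the dual-extremeness computation). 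Since part (iii) is also hung on the same unproven component claim, both (ii) for general $M'$ and (iii) remain open in your write-up.

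For comparison, the paper closes exactly this gap by induction on $m$ with a two-case contradiction that needs no Gallai--Edmonds bookkeeping. For $M'$ not containing $x_1y_1$, interlacing through the extreme set gives $\m(\theta,G\setminus V(M'))\ge k$; if the value were $\ge k+2$, deleting the path $x_1y_1$ drops the multiplicity by at most one (Lemma \ref{path_interlacing}), contradicting the inductive value $\m(\theta,(G\setminus x_1y_1)\setminus V(M'))=k$; if it were exactly $k+1$, one shows via extremeness of $X$ that $x_1$ is $\theta$-neutral or $\theta$-positive in $G\setminus V(M')$, so by Lemma \ref{essential-path} the path $x_1y_1$ cannot be $\theta$-essential there, and Lemma \ref{path_interlacing} again yields $\m(\theta,G\setminus(V(M')\cup\{x_1,y_1\}))\ge k+1$, the same contradiction. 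Independence of $Y$ then falls out directly: if $y_i\sim y_j$, part (ii) gives $\m(\theta,G\setminus P)=k$ for the path $P=x_iy_iy_jx_j$, and Lemma \ref{nonpositive_path} forces $\m(\theta,G\setminus x_ix_j)\le k$, contradicting $\theta$-niceness --- note this uses the Heilmann--Lieb identity \emph{after} (ii) is secured, whereas you invoke it speculatively to obtain (ii). If you want to salvage your outline, you should replace the Hall-type component selection with this path-interlacing argument; as it stands, the proposal is an honest half-proof with the decisive step missing.
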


\begin{proof} We shall prove it by induction on $m$. Suppose $m=2$. By Lemma \ref{essential-neighbor}, $x_1$ is adjacent to a vertex $y_1$ which is $\theta$-essential in $G \setminus x_1$. Therefore $\m (\theta,G\setminus x_1y_1)=k$. Note that $\m (\theta,G\setminus x_1x_2)=k+2$. So by Lemma \ref{interlacing}, $\m (\theta,G\setminus x_1x_2y_1)\geq k+1$, and $x_2$ is $\theta$-positive in $G\setminus x_1y_1$. Again by Lemma \ref{essential-neighbor}, $x_2$ is adjacent to a vertex $y_2$ in $G\setminus x_1y_1$ and $y_2$ is $\theta$-essential in $G \setminus x_1y_1x_2$. Hence $\m (\theta,G\setminus x_1y_1x_2y_2)=k$. Now part (i) has been proved. For part (ii), if $M'=M$ or $M'=\{x_1y_1\}$, we are done. Suppose $M'=\{x_2y_2\}$. Since $x_2$ is $\theta$-positive in $G$, by Lemma \ref{interlacing}, $\m (\theta, G\setminus x_2y_2)\geq k$. If the equality holds, we are done. Suppose $\m (\theta, G\setminus x_2y_2)\geq k+1$. Then by Lemma \ref{interlacing}, we deduce that $\m (\theta, G\setminus x_2y_2)$ is either equal to $k+2$ or $k+1$. If the former holds then by Lemma \ref{path_interlacing},  $\m (\theta, G\setminus x_2y_2x_1y_1)\geq k+1$, a contrary to the fact that  $\m (\theta, G\setminus x_2y_2x_1y_1)= k$. Suppose the latter holds. Note that  $\m (\theta, G\setminus x_2x_1)=k+2$. By Lemma \ref{interlacing}, $\m (\theta, G\setminus x_2x_1y_2)\geq k+1$. So $x_1$ is either $\theta$-neutral or $\theta$-positive in $G\setminus x_2y_2$. By Lemma \ref{path_interlacing} and Lemma \ref{essential-path}, $\m (\theta, G\setminus x_2y_2x_1y_1)\geq k+1$, a contradiction. Hence $\m (\theta, G\setminus x_2y_2)=k$ and the proof for part (ii) for $m=2$ is complete.

Let $m\geq 3$. Assume that it is true for all $\theta$-nice set $X'$ with $\vert X'\vert<\vert X\vert$. As before,
 $x_1$ is adjacent to a vertex $y_1$ which is $\theta$-essential in $G \setminus x_1$. Therefore $\m (\theta,G\setminus x_1y_1)=k$. On the other hand, by Theorem \ref{general_nice_case}, $X$ is an $\theta$-extreme set. So $\m (\theta,G\setminus X)=k+\vert X\vert$. Let $X'=\{x_2,x_3,\dots, x_m\}$. By Lemma \ref{interlacing},
\begin{equation}
k+\vert X\vert-1=k+\vert X'\vert\geq \m (\theta,(G\setminus x_1y_1)\setminus X')=\m (\theta,G\setminus (X\cup \{y_1\}))\geq k+\vert X\vert-1.\notag
\end{equation}
 Thus  $\m (\theta,(G\setminus x_1y_1)\setminus X')=k+\vert X'\vert$ and $X'$ is an $\theta$-extreme set in $G\setminus x_1y_1$. Note that $X'$ is a $\theta$-nice set by Theorem \ref{general_nice_case}. Therefore by induction, there is a matching $M_1=\{x_2y_2,x_3y_3,\dots, x_my_m\}$ in $G\setminus x_1y_1$ for which the conclusions in part (ii)  holds. Let $M=M_1\cup\{x_1y_1\}$. Then part (i) is proved.

Let $M'\subseteq M$. Suppose $x_1y_1\in M'$. Let $M_1'=M'\setminus \{x_1y_1\}$. Then we have $\m (\theta, G\setminus V(M'))=\m (\theta, (G\setminus x_1y_1)\setminus V(M_1'))=k$, where the last inequality follows from induction. Furthermore $X\setminus V(M')=X'\setminus V(M_1')$, so, if $\vert X\setminus V(M')\vert\geq 2$, $X \setminus V(M')$ is $\theta$-nice in $G \setminus V(M')$.

Suppose $x_1y_1\notin M'$. Let $X_2=X\setminus V(M')$. Since $X$ is an $\theta$-extreme set, by Lemma \ref{interlacing}, it is not hard to deduce that $\m(\theta, G \setminus (X\setminus X_2))=k+\vert X\setminus X_2\vert$.  By Lemma \ref{interlacing} again, $\m(\theta, G \setminus V(M'))=\m(\theta, (G \setminus (X\setminus X_2))\setminus (V(M')\setminus (X\setminus X_2)))\geq k+\vert X\setminus X_2\vert-\vert X\setminus X_2\vert=k$.

Suppose $\m(\theta, G \setminus V(M'))\geq k+1$. If $\m(\theta, G \setminus V(M'))\geq k+2$, then by Lemma \ref{path_interlacing}, we have $\m(\theta, (G\setminus V(M'))\setminus x_1y_1)\geq k+1$, a contradiction, for by induction we have $\m(\theta, (G\setminus x_1y_1) \setminus V(M'))=k$. Thus $\m(\theta, G \setminus V(M'))=k+1$. Let $X_3=X\setminus X_2$. Since $X$ is an $\theta$-extreme set, by Lemma \ref{interlacing},  $\m(\theta, G \setminus (X_3\cup\{x_1\}))=k+\vert X_3\vert +1$. Again by Lemma \ref{interlacing}, $\m(\theta, (G \setminus (X_3\cup\{x_1\}))\setminus V(M'))\geq k+1$. Note that $(G \setminus (X_3\cup\{x_1\}))\setminus V(M')=G\setminus (V(M')\cup \{x_1\})$. So $x_1$ is either $\theta$-neutral or $\theta$-positive in $G\setminus V(M')$. But then by  Lemma \ref{path_interlacing} and Lemma \ref{essential-path}, $\m (\theta, G\setminus (V(M')\cup \{x_1,y_1\}))\geq k+1$, a contradiction. Hence $\m (\theta, G\setminus V(M'))=k$.

Suppose $|X_2| \ge 2$. Recall that $X$ is an $\theta$-extreme set. So $\m (\theta, G\setminus X)=k+\vert X\vert$ and by Lemma \ref{interlacing}, $\m (\theta, (G\setminus X)\setminus V(M'))\geq k+\vert X\vert-\vert X\setminus X_2\vert=k+\vert X_2\vert$. Note that $(G\setminus X)\setminus V(M')=(G\setminus V(M'))\setminus X_2$. By Lemma \ref{interlacing} again, $\m (\theta, (G\setminus V(M'))\setminus X_2)\leq k+\vert X_2\vert$. Hence $X_2$ is an $\theta$-extreme set and thus a $\theta$-nice set in $G\setminus V(M')$. This completes the proof of part (ii).

Suppose $Y$ is not an independent set, i.e. $y_{i}$ is joined to $y_{j}$ for some $i, j \in \{1, \ldots, m\}$. Then the path $P:=x_{i}y_{i}y_{j}x_{j}$ satisfies $\m(\theta, G \setminus P)=\m(\theta, G)$ by part (ii). By Lemma \ref{nonpositive_path}, we deduce that $\m(\theta, G \setminus x_{i}x_{j}) \le \m(\theta, G)$, contradicting the $\theta$-niceness of $X$.
\end{proof}

\begin{thm}
Let $X$ be a $\theta$-nice set in $G$ and $Y$ be a corresponding independent set guaranteed by Theorem \ref{nice_matching}. Then $D_{\theta}(G)$ contains an isomorphic copy of the subgraph of $G$ induced by $X \cup Y$.
\end{thm}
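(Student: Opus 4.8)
The plan is to realize the copy through the \emph{matching-swap} bijection $\phi : X\cup Y \to X\cup Y$ given by $\phi(x_i)=y_i$ and $\phi(y_i)=x_i$; this is the natural candidate because $Y$ is independent in $G$ whereas $X$ is independent in $D_\theta(G)$ (the latter since $X$ is $\theta$-nice, so $\m(\theta, G\setminus x_ix_j)=k+2>k$ forces $x_ix_j\notin E(D_\theta(G))$, where $k=\m(\theta,G)$), and because each matching edge $x_iy_i$ is fixed by $\phi$ and already lies in $D_\theta(G)$ by Theorem \ref{nice_matching}(ii) applied to $M'=\{x_iy_i\}$, which gives $\m(\theta,G\setminus x_iy_i)=k$. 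Since $\phi$ is visibly injective and ``containing an isomorphic copy'' requires only that edges be sent to edges, it suffices to verify three implications: (III) $x_iy_i\in E(G)\Rightarrow y_ix_i\in E(D_\theta(G))$, which is the computation just made; (I) $x_ix_j\in E(G)\Rightarrow y_iy_j\in E(D_\theta(G))$; and (II) $x_iy_j\in E(G)$ with $i\neq j$ $\Rightarrow y_ix_j\in E(D_\theta(G))$. Edges inside $Y$ need no attention, as $Y$ is independent in $G$.

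The engine for (I) and (II) is a path certificate built from the matching together with Theorem \ref{nice_matching}(ii). In case (I) the walk $Q:\ y_i-x_i-x_j-y_j$ is a genuine path in $G$ (its three edges are the two matching edges and the hypothesized edge $x_ix_j$, and its four vertices are distinct), and $G\setminus V(Q)=G\setminus V(\{x_iy_i,x_jy_j\})$, so $\m(\theta,G\setminus Q)=k$ by Theorem \ref{nice_matching}(ii). Likewise, in case (II) the path $R:\ y_i-x_i-y_j-x_j$ gives $\m(\theta,G\setminus R)=k$. Thus in each case I obtain a path between the two target vertices whose deletion leaves the multiplicity of $\theta$ equal to $k=\m(\theta,G)$, and the task reduces to upgrading ``a good path exists'' to ``the two endpoints are adjacent in $D_\theta(G)$'', i.e.\ to $\m(\theta,G\setminus ab)\le k$ for the endpoints $a,b$.

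This upgrade is clean whenever the endpoints are of favourable Gallai-Edmonds type. If a target endpoint is $\theta$-essential, then by Theorem \ref{d_graph_for_G}(a) it is adjacent in $D_\theta(G)$ to every vertex, so the required edge is present for free. If both endpoints are $\theta$-positive --- which holds automatically for the $x_j$-endpoint in case (II) --- then Lemma \ref{nonpositive_path} applies directly: the path $Q$ (resp.\ $R$) certifies $\m(\theta,G\setminus y_iy_j)\le k$ (resp.\ $\m(\theta,G\setminus y_ix_j)\le k$), yielding the edge. So the whole argument goes through once one knows that each target endpoint coming from $Y$ is either $\theta$-essential or $\theta$-positive.

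The main obstacle is precisely that a matched vertex $y_i$ need not be essential or positive in $G$: the construction only guarantees that $y_i$ is $\theta$-essential in a reduced graph, and examples show $y_i$ may be $\theta$-neutral, while a swapped $x_j$ may be $\theta$-special; in either event Lemma \ref{nonpositive_path} does not apply and the vertex is not universal in $D_\theta(G)$ (indeed a $\theta$-special vertex is adjacent in $D_\theta(G)$ only to $B_\theta(G)$, by Theorem \ref{d_graph_for_G}). Here I would abandon the path lemma and argue inside the Gallai-Edmonds framework using the explicit description of Theorem \ref{d_graph_for_G} together with the componentwise multiplicity formula of Corollary \ref{S:C3b}. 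Concretely, a $\theta$-neutral $y_i$ lies in some non-critical component $Q_{j_0}$ of $G\setminus A_\theta(G)$ with $\m(\theta,Q_{j_0})=0$, and I must show that the presence of the $G$-edge forces the other endpoint to lie in a different $Q$-component (case (b) of Theorem \ref{d_graph_for_G}) or to be joined to $y_i$ inside $D_{0,\theta}(Q_{j_0})$ (case (c)); the expectation is that the good path $Q$ or $R$, fed into Corollary \ref{S:C3b}, pins down exactly this localization and rules out the only failure mode (a $\theta$-special partner with a non-essential $y_i$). Establishing this localization --- equivalently, showing that whenever $x_ix_j$ or $x_iy_j$ is an edge of $G$ the corresponding target pair is of a type that is adjacent in $D_\theta(G)$ --- is where the bulk of the work lies, and it is the step I expect to be delicate.
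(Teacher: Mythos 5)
You have reconstructed the paper's proof exactly in its skeleton: the same swap map fixing each matching edge, with $(x_i,y_i)\in E(D_{\theta}(G))$ coming from Theorem \ref{nice_matching}(ii) applied to $M'=\{x_iy_i\}$, and the same path certificates $y_ix_ix_jy_j$ and $y_ix_iy_jx_j$ (whose vertex sets are $V(\{x_iy_i,x_jy_j\})$, so Theorem \ref{nice_matching}(ii) gives $\m(\theta,G\setminus P)=\m(\theta,G)$) fed into Lemma \ref{nonpositive_path}. Where you depart from the paper is in refusing to invoke Lemma \ref{nonpositive_path} at endpoints not known to be $\theta$-positive, and here your proposal contains one outright error and one unfinished step. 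The error: you list ``a swapped $x_j$ may be $\theta$-special'' as a failure mode, but a $\theta$-special vertex \emph{is} $\theta$-positive (the paper states this explicitly, citing Godsil), and the hypothesis of Lemma \ref{nonpositive_path} is positivity, not membership in $P_{\theta}(G)$; whether a special vertex is universal in $D_{\theta}(G)$ is irrelevant to applying the lemma. So every endpoint drawn from $X$ is unproblematic, and in your case (II) the pair $(y_i,x_j)$ always has at least one positive endpoint.

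Your other scruple --- that $y_i$ need not be $\theta$-positive or $\theta$-essential in $G$ --- is legitimate: for instance in $G=P_6$ with vertices $a,b,c,d,e,f$ and $\theta=1$, the set $X=\{a,d\}$ is $1$-nice with $k=0$, the construction forces $y_1=b$, and $b$ is $1$-neutral; indeed the paper's own proof applies Lemma \ref{nonpositive_path} to the $y$-endpoints without ever establishing their positivity, so on this point you are being more careful than the paper. But your proposal does not close the case: the last paragraph is an unexecuted plan (``the expectation is that \dots'', ``where the bulk of the work lies''), so as a standalone proof it is incomplete precisely where it diverges. This is a genuine gap, and it is worth noting how much of it closes cheaply: if an endpoint is $\theta$-essential, then $\m(\theta,G\setminus uv)\le\m(\theta,G)$ follows at once from Lemma \ref{interlacing}, no structure theorem needed; and rerunning the Heilmann--Lieb multiplicity count in the proof of Lemma \ref{nonpositive_path} shows its $(\Longleftarrow)$ direction survives when one endpoint is positive and the other merely neutral, since the left-hand side then has $\theta$-multiplicity at least $2k+1>2t$. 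With those two observations, the only configuration left open is $x_ix_j\in E(G)$ with both $y_i,y_j$ $\theta$-neutral, and that single case is the one place where a Gallai--Edmonds localization via Theorem \ref{d_graph_for_G} and Corollary \ref{S:C3b} is actually required --- the step you gestured at but did not carry out.
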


\begin{proof}
Let $X=\{x_{1}, \ldots, x_{m}\}$ and $Y=\{y_{1}, \ldots, y_{m}\}$. Consider the subgraph $H$ of $G$ induced by $X \cup Y$.

By part (ii) of Theorem \ref{nice_matching}, $(x_{i},y_{i}) \in E(D_{\theta}(G))$ for all $i=1, \ldots, m$.

If $(x_{i}, x_{j}) \in E(H)$, then the path $P:=y_{i}x_{i}x_{j}y_{j}$ satisfies $\m(\theta, G \setminus P) \le \m(\theta, G)$ by part (ii) of Theorem \ref{nice_matching}, so by Lemma \ref{nonpositive_path}, $(y_{i}, y_{j}) \in E(D_{\theta}(G))$.

Similarly, if $(x_{i}, y_{j}) \in E(H)$ then the path $Q:=y_{i}x_{i}y_{j}x_{j}$ satisfies $\m(\theta, G \setminus Q) \le \m(\theta, G)$, whence $(y_{i}, x_{j}) \in E(D_{\theta}(G))$.

Therefore, $D_{\theta}(G)$ contains an isomorphic copy of $H$.
\end{proof}

\end{document}